\documentclass[12pt,reqno]{amsart}
\usepackage[margin = 1in]{geometry}
\usepackage{float}
\usepackage{graphicx} 
\usepackage{amsmath}
\usepackage{amssymb}
\usepackage[]{amsthm}
\usepackage{enumitem}
\usepackage{microtype}
\usepackage{url}
\usepackage{tikz,tikz-3dplot} 
\usepackage{verbatim}
\usepackage[
  colorlinks=true,
  linkcolor=black,
  citecolor=black,
  urlcolor=black
]{hyperref}

\newtheoremstyle{case}{}{}{}{}{}{:}{}{}

\theoremstyle{case}

\theoremstyle{definition}
\newtheorem{definition}{Definition}[section]
\newtheorem{theorem}{Theorem}[section]
\newtheorem{lemma}[theorem]{Lemma}
\newtheorem{prop}{Proposition}[section]
\newtheorem{cor}[theorem]{Corollary}

\theoremstyle{definition}
\newtheorem{example}{Example}[section]
\newtheorem{remark}{Remark}[section]
\newtheorem{question}{Question}[section]

\DeclareMathOperator{\conv}{conv}
\DeclareMathOperator{\vol}{vol}
\DeclareMathOperator{\Vol}{Vol}

\newcommand{\assocpoly}[1]{f_{#1}}

\title[Triangulation, Volume, and Symmetries of the Dihedral Permutohedron]{Triangulation, Volume, and Symmetries of a Dihedral Group Analogue of the Permutohedron}
\author{Colin Olairez}
\date{August 2026}

\begin{document}

\begin{abstract}

We study an analogue of the permutohedron for the dihedral group, which we call the dihedral permutohedron. We determine the facet structure of the dihedral permutohedron, use the facet structure to construct a triangulation, and determine a nice combinatorial formula for the volume of the dihedral permutohedron. Lastly, we find a family of interesting unimodular affine symmetries of the odd dihedral permutohedron. 

\end{abstract}

\maketitle
    
\section{Introduction}

Two polytopes associated to the symmetric group have long been of mathematical interest. These two polytopes are the Birkhoff polytope and its projection, the permutohedron. 
The Birkhoff polytope $B_n$ is the convex hull of the $n \times n$ permutation matrices, where an $n \times n$ permutation matrix is embedded as a vector in $\mathbb{R}^{n^2}$. One can obtain the permutohedron from the Birkhoff polytope by setting the basepoint $\rho = (1,2, \cdots, n-1, n)$ and considering 
\[\conv (\{P(\rho) \mid P \ \text{is a permutation matrix} \}),\]
a projection of the Birkhoff polytope to $\mathbb{R}^n$ \cite{schoute,MR45168}. 
The combinatorial structure and geometry of these polytopes are fascinating and there are many papers that discuss this. One can form analogous polytopes for other finite groups. Shmuel Onn in \cite{MR1239510} defines how given any group $G$ of permutation matrices, one can form the \emph{permutation polytope} $\conv(G) \subset \mathbb{R}^{n^2}$ and for any vector $\rho \in \mathbb{R}^n$, the \emph{orbit polytope} $\conv(G\rho) \subset \mathbb{R}^n$. These polytopes are of great mathematical interest in the combinatorial literature.  Onn's paper writes extensively about a subclass of permutation polytopes called the Young permutation polytopes, arising from the Young representations of the symmetric group. In \cite{MR3436389}, affine symmetries of general orbit polytopes are studied. This paper specifically discovers a family of unimodular affine symmetries of the dihedral permutohedron, an analogue of the permutohedron for the dihedral group, for all odd $n \geq 3$. One can see \cite{MR2538016} for more work on permutation polytopes. This paper focuses on the orbit polytope corresponding to the dihedral group \begin{equation}\label{eq:dihedral}D_{2n} := \langle s_1,s_2 \mid s_1^2 = 1, s_2^2 = 1, (s_1s_2)^n = 1 \rangle.\end{equation} 

Since $D_{2n}$ can be thought of as a subgroup of $S_n$ by considering the action of $D_{2n}$ on the $n$ vertices of a regular $n$-gon, one can represent $D_{2n}$ as a subgroup of the $n \times n$ permutation matrices. Explicitly, we derive the matrix representations of $s_1, s_2$ below and construct $D_{2n}$ as a group of permutation matrices with the same generators and relations as before.

\begin{equation}\label{eq:dihedral_matrices}
s_1 \;=\;
\begin{pmatrix}
0 & 1 &        &         &        &        &        \\
1 & 0 &        &         &        &        &        \\
  &   & 0      & 0       & \cdots & 0      & 1      \\
  &   & 0      & 0       & \cdots & 1      & 0      \\
  &   & \vdots &         & \reflectbox{$\ddots$} &        & \vdots \\
  &   & 0      & 1       & \cdots & 0      & 0      \\
  &   & 1      & 0       & \cdots & 0      & 0
\end{pmatrix},
\qquad
s_2 \;=\;
\begin{pmatrix}
1 &        &        &        &        &        \\
  & 0      & 0      & \cdots & 0      & 0 &  1 \\
  & 0      & 0      & \cdots & 0      & 1 & 0  \\
  & 0      & 0      & 0 &  1     & 0 & 0       \\
  & \vdots & \vdots       & \reflectbox{$\ddots$} &  \vdots      & \vdots & \vdots \\
  & 0      & 1      & \cdots & 0      & 0  & 0    \\
  & 1      & 0      & \cdots & 0      & 0 & 0
\end{pmatrix}.
\end{equation}
In these matrix representations, any blank entries are $0$ and the matrices in the bottom right have $1$'s only along the antidiagonal. These generators will also be presented as permutations in cycle notation in the next section. With this matrix perspective we can define the \emph{dihedral polytope}, $\conv(D_{2n}) \subset \mathbb{R}^{n^2}$, which has been studied in previous work, leading to a full understanding of its Ehrhart polynomial and combinatorial structure \cite{MR3029450}. However, the orbit polytope analogue of the permutohedron for the dihedral group, $Q_n := \conv(D_{2n} \rho)$, has hardly any literature on it. In this paper, the main result is a formula for the volume of this dihedral group analogue $Q_n$, which we call the \emph{dihedral permutohedron}. Specifically, this paper proves that the normalized volume of the dihedral permutohedron for $n \geq 3$ is \[
\vol(Q_n) = n^{n-2}\binom{n-1}{\lfloor \frac{n-1}{2} \rfloor}.\] On the way to proving this formula, the facet structure of the dihedral permutohedron is derived as well as a nice way to triangulate the dihedral permutohedron for all $n \geq 3$.

\section{The Dihedral Permutohedron for \texorpdfstring{$n = 3,4$}{n = 3,4}}

For $n = 3$, $D_{2n} = D_6 \cong S_3$, so the dihedral permutohedron for $n=3$ is also the permutohedron for $n = 3$. See the figure of the dihedral permutohedron and a triangulation below. Note that every triangle in the triangulation contains $r_1$, which is a pattern that will continue for all odd $n \geq 3$. For $n > 3$, $D_{2n}$ is a proper subgroup of $S_n$, so the dihedral permutohedron gets more interesting.

\begin{figure}[h]
\centering
\begin{tikzpicture}[scale=1.4, every node/.style={font=\small}]
  \coordinate (r1) at (210:1);
  \coordinate (f2) at (270:1);
  \coordinate (r3) at (330:1);
  \coordinate (f1) at (30:1);
  \coordinate (r2) at (90:1);
  \coordinate (f3) at (150:1);

  \fill[blue!12] (r1) -- (f2) -- (r3) -- cycle;
  \fill[blue!12] (r2) -- (f3) -- (r1) -- cycle;
  \fill[orange!15] (r1) -- (r3) -- (f1) -- cycle;
  \fill[orange!15] (r1) -- (f1) -- (r2) -- cycle;

  \draw[thick] (r1) -- (f2) -- (r3) -- (f1) -- (r2) -- (f3) -- cycle;
  \draw[thick, gray] (r1) -- (r3);
  \draw[thick, gray] (r3) -- (f1);
  \draw[thick, gray] (f1) -- (r2);
  \draw[thick, gray] (r2) -- (r1);
  \draw[thick, gray] (r1) -- (f1);

  \foreach \v/\lab/\pos in {r1/$r_1$/left, r2/$r_2$/above, r3/$r_3$/right, f1/$f_1$/right, f2/$f_2$/below, f3/$f_3$/left}{
    \fill (\v) circle (1.4pt);
    \node[\pos] at (\v) {\lab};
  }
\end{tikzpicture}
\caption{The dihedral permutohedron for $n=3$: a regular hexagon with vertices alternating between rotations $r_1,r_2,r_3$ and reflections $f_1,f_2,f_3$. The shading shows a triangulation in which the two lightly shaded triangles have normalized volume $1$, and the two darker triangles have normalized volume $2$, yielding an overall normalized volume of $6$, which agrees with the formula of this paper.}
\end{figure}

In the dihedral permutohedron for $n = 4$, every facet has an equal number of rotations and reflections, which is a pattern that we will prove holds for all $n \geq 3$. See the figure below. One tetrahedron of a triangulation of the dihedral permutohedron for $n = 4$ is visible and contains the two vertices $r_1,f_1$. We will see later that one can construct a triangulation for all even $n > 3$ where every simplex contains two vertices $r_1,f_1$. The pattern in the facets will be useful for proving the existence of this triangulation for the even dihedral permutohedron in Section 4. These examples and others were key in making the conjectures that we prove in Section 4.

\begin{figure}[H]
\centering
\begin{tikzpicture}[
  x={(1.65cm,0.28cm)},
  y={(0.42cm,1.06cm)},
  z={(-0.95cm,0.95cm)},
  scale=1, every node/.style={font=\normalsize}
]
  \coordinate (r1) at (0,0,0);
  \coordinate (f2) at (1.4142,0,0);
  \coordinate (f3) at (0,1,0);
  \coordinate (f4) at (0,0,1.4142);
  \coordinate (r4) at (1.4142,1,0);
  \coordinate (r3) at (1.4142,0,1.4142);
  \coordinate (r2) at (0,1,1.4142);
  \coordinate (f1) at (1.4142,1,1.4142);
  
  \draw[dashed] (r1) -- (f3);
  \draw[thick] (r1) -- (f4);
  \draw[orange, thick] (f2) -- (r1);

  \draw[dashed] (f1) -- (r3);
  \draw[dashed] (f2) -- (r3);
  \draw[orange, thick] (f2) -- (r4);
  \draw[thick] (f3) -- (r2);
  \draw[dashed] (f3) -- (r4);
  \draw[dashed] (f4) -- (r3);
  \draw[thick] (f4) -- (r2);
  \draw[thick] (r2) -- (f1);

  \draw[orange, thick] (r1) -- (f1);
  \draw[orange, dashed, thick,] (r1) -- (r4);
  \draw[orange, thick] (f1) -- (f2);
  \draw[orange, thick] (f1) -- (r4);
  
  \foreach \v/\lab/\pos in {%
    r1/$r_1$/left, r2/$r_2$/above, r3/$r_3$/right, r4/$r_4$/below right,%
    f1/$f_1$/above right, f2/$f_2$/below right, f4/$f_4$/above left%
  }{
    \fill (\v) circle (1pt);
    \node[\pos] at (\v) {\lab};
  }
  \fill (f3) circle (1pt);
  \node[left=6pt] at (f3) {$f_3$};
\end{tikzpicture}
\caption{The dihedral permutohedron for $n=4$: a rectangular box with $8$ vertices, alternating between rotations $r_1,\ldots,r_4$ and reflections $f_1,\ldots,f_4$. One of the tetrahedra in a triangulation of the dihedral permutohedron is shown in orange. The dihedral permutohedron for $n=4$ has normalized volume $48$, which is consistent with the formula of this paper.}
\end{figure}
    
\section{Notation and Preliminaries}

We first review existing theory on the dihedral polytope and the dihedral permutohedron. As in the introduction, we can think about $D_{2n}$ as a standalone group or think about $D_{2n}$ as a subgroup of $S_n$, acting on the $n$ vertices of a regular $n$-gon. Just like the permutohedron, we will work with the basepoint $\rho := (1, 2 , \cdots, n)$. We will first define the two permutations that represent the simple reflections that generate the dihedral group. Following the group presentation for $D_{2n}$ in  \eqref{eq:dihedral}, $n = 2m + 1$, for odd $n \geq 3$, we can look at how reflections of the dihedral group act on the $n$ labeled vertices of a regular $n$-gon and derive the following equations for the generating simple reflections in terms of permutations:

\begin{equation}
\label{eq:odd_perm}
\begin{aligned}
s_1 &= (1,2)(3,n)(4,n-1)\cdots(m+1,m+3), \\
s_2 &= (2,n)(3,n-1)(4,n-2)\cdots(m+1,m+2).
\end{aligned}
\end{equation} For even $n > 3$, we can define the simple reflections to be

\begin{equation}
\label{eq:even_perm}
\begin{aligned}
s_1 &= (1,2)(3,n)(4,n-1)\cdots(m+1,m+2), \\
s_2 &= (2,n)(3,n-1)(4,n-2)\cdots(m,m+2).
\end{aligned}
\end{equation} These permutations agree with the matrix presentations of $s_1$ and $s_2$ in \eqref{eq:dihedral_matrices}. Lastly, we have that $\mathbf{1} = (1, 1, \cdots, 1)$, the all $1$'s vector and that given a permutation $\sigma$, $\sigma(1, \cdots, n) = (\sigma(1), \cdots, \sigma(n))$.

\begin{definition}
    We define the \emph{dihedral permutohedron} $Q_n := \conv(D_{2n}\rho)$.
\end{definition}

\begin{lemma}
    Let $Q_n = \conv(D_{2n} \rho)$ be the dihedral permutohedron. The vertices of $Q_n$ are precisely the vectors $w \rho$ with $w \in D_{2n}$, where we use the matrix representation of $w$ from the generators in  \eqref{eq:dihedral_matrices}.
\end{lemma}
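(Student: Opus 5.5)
Since $Q_n$ is defined as the convex hull of the finite set $D_{2n}\rho$, every vertex of $Q_n$ lies in this set. It therefore suffices to show that each point $w\rho$ with $w\in D_{2n}$ is a vertex, i.e.\ is not a convex combination of the remaining points. I will show this by exhibiting a linear functional maximized uniquely at $w\rho$.

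The key observation is that all points $w\rho$ lie on a common sphere: each $w$ is a permutation matrix, so $\|w\rho\|^2=\sum_{i=1}^n i^2$ for every $w$. Moreover, the points $w\rho$ are pairwise distinct, because $\rho=(1,2,\dots,n)$ has distinct coordinates and the permutation matrices of $D_{2n}$ are distinct (the representation in \eqref{eq:dihedral_matrices} is faithful, as the action on the vertices of the $n$-gon is). Now fix $w$ and take the functional $x\mapsto \langle w\rho, x\rangle$. For any $v\in D_{2n}$ with $v\rho\neq w\rho$, Cauchy--Schwarz gives $\langle w\rho, v\rho\rangle<\|w\rho\|\,\|v\rho\|=\|w\rho\|^2$, with strict inequality because equality would force $v\rho$ to be a nonnegative multiple of $w\rho$ of the same norm, hence equal to it. Thus the functional attains its maximum over $Q_n$ only at $w\rho$, since a convex combination attaining the maximum must put all its weight on maximizers. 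So $w\rho$ is a vertex. Equivalently, one can use the rearrangement inequality: $\sum_i w\rho_i\, v\rho_i$ is maximized exactly when the two vectors are similarly ordered, which for vectors with distinct entries means they are equal.

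There is no serious obstacle. The only points to check are faithfulness of the matrix representation, so that the $2n$ points are distinct, and the strictness in Cauchy--Schwarz. Both are immediate from the setup in the preliminaries.
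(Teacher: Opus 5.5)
Your proof is correct, but it takes a different route from the paper's.

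The paper argues by symmetry. Since $Q_n$ is the convex hull of the finite set $D_{2n}\rho$, at least one point $w_0\rho$ is a vertex. For any other $w_1$, the linear map $w_1w_0^{-1}\in D_{2n}$ permutes the orbit, so it maps $Q_n$ onto itself and vertices to vertices, sending $w_0\rho$ to $w_1\rho$.

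You instead exhibit, for each $w$, an explicit exposing functional $x\mapsto\langle w\rho,x\rangle$. This works because the orbit lies on a sphere centered at the origin, combined with the equality case of Cauchy--Schwarz (or the rearrangement inequality).

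\begin{itemize}
\item The paper's argument is shorter and works verbatim for the orbit of any finite group of invertible affine maps, with no metric input. It does lean on the standard but unstated fact that an affine bijection carrying $Q_n$ onto itself carries vertices to vertices.
\item Your argument uses no group structure, only that every point of $D_{2n}\rho$ is a coordinate permutation of $\rho$ and so has the same norm. It therefore shows more generally that any finite set of points on a sphere is in convex position. It also supplies an explicit outer normal vector at each vertex.
\end{itemize}

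The distinctness of the $2n$ points, and hence faithfulness of the representation, is not needed for your argument. You only compare $w\rho$ with points $v\rho\neq w\rho$, so distinctness matters only if you want to count the vertices as exactly $2n$.
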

\begin{proof}
    At least one of the vectors $w_0\rho$ in the convex hull must be a vertex. Consider any other vector $w_1\rho$. We have that $w_1(w_0^{-1}(w_0\rho)) = w_1\rho$. This is an affine symmetry of the dihedral permutohedron that sends vertices to vertices, so $w_1\rho$ is also a vertex. Indeed, the vertices are exactly $w\rho$ for all $w \in D_{2n}$.
\end{proof}

Fix the notation that 
\begin{equation}\label{eq:rotations}
    r_i := ((1 + (i-1), \cdots, n+(i-1)) \bmod n) \in \mathbb{R}^n.
\end{equation} Here, we choose $n$ as the representative of the identity element in $\mathbb{Z}/n$. These $r_i$ represent the $n$ vertices of the dihedral permutohedron that correspond to the $n$ rotations of the dihedral group. Consider the $n$ vertices of the dihedral permutohedron corresponding to the $n$ rotations. If we choose the $n$ rotation vertices to be the columns of an $n \times n$ matrix, with a specific ordering of the column vectors, they form a \emph{circulant} matrix, a square matrix where each column can be obtained from the next via a cyclic shift by one element. This is the matrix \begin{equation}\label{eq:circ}
  R \;=\;
  \begin{pmatrix}
    \Vec{r_1} & \Vec{r_n} &  \cdots & \Vec{r_3} & \Vec{r_2}
  \end{pmatrix}
  \;=\;
  \begin{pmatrix}
    1      & n      & n-1      & \cdots & 3    & 2      \\
    2      & 1      & n      & \cdots & 4      & 3      \\
    3      & 2      & 1      & \cdots & 5      & 4      \\
    \vdots & \vdots & \vdots & \ddots & \vdots & \vdots \\
    n-1    & n-2    & n-3      & \cdots & 1    & n    \\
    n      & n-1    & n-2      & \cdots &  2   & 1
  \end{pmatrix}.
\end{equation}

If these $n$ rotation vectors are linearly independent, then the dimension of the dihedral permutohedron must be at least $n-1$. This is exactly the condition that the determinant of the matrix above is nonzero.

\begin{definition}
  Let $C = \operatorname{circ}(c_0, c_1, \dots, c_{n-1})$ be the $n \times n$ circulant matrix with first column $(c_0, c_1, \dots, c_{n-1})$. The \emph{associated polynomial} of $C$ is the polynomial whose
  coefficients are the entries of the first column of $C$,
  \[
    \assocpoly{C}(x) = c_0 + c_1 x + c_2 x^2 + \cdots + c_{n-1} x^{n-1}.
  \]
\end{definition}

This associated polynomial allows us to get information about the determinant of our circulant matrix.

\newcommand{\rootunity}{\omega}   

\begin{theorem}
  \cite{circ} Let $C = \operatorname{circ}(c_0, \dots, c_{n-1})$ be a circulant matrix
  with associated polynomial $\assocpoly{C}$, and let
  $\rootunity = e^{2\pi i/n}$. Then
  \[
    \det C = \prod_{k=0}^{n-1} \assocpoly{C}\left(\rootunity^k\right).
  \]
\end{theorem}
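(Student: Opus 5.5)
The plan is the standard diagonalization argument. Write $C = \sum_{j=0}^{n-1} c_j P^j$, where $P$ is the $n\times n$ cyclic shift permutation matrix. This is the matrix with $P e_k = e_{k+1}$, indices taken mod $n$.

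First I will check this decomposition. Column $k$ of $C$ is the first column cyclically shifted down $k$ times, so its entry in row $i$ is $c_{i-k \bmod n}$. On the other hand, $P^j$ has a $1$ exactly in positions $(i,k)$ with $i \equiv k+j \pmod n$. Summing $c_j P^j$ over $j$ therefore reproduces $C$. Consequently $C = \assocpoly{C}(P)$.

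The key step is to diagonalize $P$ using Fourier vectors. For each $k = 0,\dots,n-1$, define
\[
v_k = \left(1, \rootunity^{-k}, \rootunity^{-2k}, \dots, \rootunity^{-(n-1)k}\right)^{T}.
\]
I will verify directly that $P v_k = \rootunity^{k} v_k$. Entry $i$ of $Pv_k$ is entry $i-1$ of $v_k$, namely $\rootunity^{-(i-1)k} = \rootunity^{k}\rootunity^{-ik}$; this is also correct at the wraparound because $\rootunity^{n}=1$. It follows that
\[
C v_k = \assocpoly{C}(P)\, v_k = \assocpoly{C}\left(\rootunity^k\right) v_k .
\]

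Next, the vectors $v_0,\dots,v_{n-1}$ are the columns of a Vandermonde matrix built on the distinct nodes $\rootunity^{-k}$. They are therefore linearly independent, so $C$ is diagonalizable with eigenvalues $\assocpoly{C}(\rootunity^k)$, counted with multiplicity. Since the determinant is the product of the eigenvalues (equivalently, $\det$ is invariant under conjugation by the Vandermonde matrix $V$), this gives $\det C = \prod_k \assocpoly{C}(\rootunity^k)$.

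The only delicate point is a convention issue: the direction of the shift, and hence whether the eigenvector uses $\rootunity^{k}$ or $\rootunity^{-k}$, must match the paper's convention that columns shift cyclically. Either choice gives the same multiset of eigenvalues, since $k \mapsto -k$ permutes $\mathbb{Z}/n$. So the product is unaffected, and I will simply fix one consistent orientation. The Vandermonde independence is standard and can be quoted.
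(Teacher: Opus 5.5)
Your proof is correct and complete. The paper gives no proof of its own and cites Davis's book, where the argument is the one you give: write $C=\assocpoly{C}(P)$ for the cyclic shift $P$ and diagonalize $P$ by the Fourier (Vandermonde) eigenvectors. Your shift convention also matches the paper's (first column $(c_0,\dots,c_{n-1})$, columns shifted downward as in the matrix $R$).
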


With this formula, we can derive one of the two key results for finding the volume of the dihedral permutohedron.

\begin{prop}\label{prop:basis}
The $n$ rotation vertices \eqref{eq:rotations} of the dihedral permutohedron are linearly independent and the dihedral permutohedron has dimension $n - 1$.    
\end{prop}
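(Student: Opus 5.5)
We will show that the circulant matrix $R$ in \eqref{eq:circ} has nonzero determinant by applying the circulant determinant theorem from \cite{circ}. The first column of $R$ is $(1,2,\dots,n)$, so the associated polynomial is
\[
\assocpoly{R}(x) = \sum_{j=0}^{n-1} (j+1)x^j .
\]
We must check that the columns of $R$ really are cyclic shifts in the convention of the theorem. Each column is obtained from the previous one by shifting down by one, which matches $\operatorname{circ}(c_0,\dots,c_{n-1})$. So it suffices to show $\assocpoly{R}(\rootunity^k)\neq 0$ for every $k=0,\dots,n-1$.

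For $k=0$ we get $\assocpoly{R}(1)=n(n+1)/2\neq 0$. For $k\neq 0$, set $z=\rootunity^k$, so that $z^n=1$ and $z\neq 1$. Since $\assocpoly{R}(x)=\frac{d}{dx}\bigl(x+x^2+\cdots+x^n\bigr)$, we can either differentiate the geometric sum, or compute $(1-z)\assocpoly{R}(z)$ directly by telescoping. Telescoping gives
\[
(1-z)\assocpoly{R}(z)=\sum_{j=0}^{n-1} z^j - n z^n = 0 - n = -n .
\]
Hence $\assocpoly{R}(z) = -n/(1-z)\neq 0$. Therefore $\det R\neq 0$, and up to sign it equals $\frac{n(n+1)}{2}\cdot\prod_{k=1}^{n-1}\frac{-n}{1-\rootunity^k}$, which simplifies using $\prod_{k=1}^{n-1}(1-\rootunity^k)=n$. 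We do not need the exact value, but it is a useful check. This proves the rotation vectors $r_1,\dots,r_n$ are linearly independent.

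For the dimension, we first obtain the lower bound. The $n$ vectors $r_i$ are linearly independent, hence affinely independent. They are vertices of $Q_n$ by the vertex lemma, so $\dim Q_n\ge n-1$.

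For the upper bound, every vertex $w\rho$ is a permutation of $(1,\dots,n)$. So every vertex lies in the affine hyperplane $\{x : \langle x,\mathbf{1}\rangle = n(n+1)/2\}$, and hence so does $Q_n$. This gives $\dim Q_n\le n-1$, so $\dim Q_n = n-1$.

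The only step needing any care is the evaluation of $\assocpoly{R}$ at nontrivial roots of unity. This is the telescoping identity above, so we do not expect a real obstacle. The main thing to double-check is that the column ordering of $R$ matches the circulant convention, and even if it did not, reordering columns only changes the sign of the determinant.
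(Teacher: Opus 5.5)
Your proposal is correct and takes essentially the same route as the paper: it uses the circulant determinant formula, shows that $1+2x+\cdots+nx^{n-1}$ is nonzero at every $n$th root of unity, and bounds the dimension above by the hyperplane $x_1+\cdots+x_n = n(n+1)/2$. The only difference is cosmetic: you check nonvanishing at $z\neq 1$ through the explicit telescoping value $-n/(1-z)$, while the paper argues that the antiderivative $x(1-x^n)/(1-x)$ has only simple roots there, so its derivative cannot vanish.
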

\begin{proof}
Firstly, just like the permutohedron, $Q_n$ lives in the hyperplane defined by the linear equation $x_1 + x_2 + \cdots + x_n = 1 + 2 + \cdots + n = n(n+1)/2$, so dim $Q_n \leq n-1$. For the circulant matrix defined by our rotation vectors in \eqref{eq:circ}, the associated polynomial is $1 + 2x + 3x^2 + \cdots + nx^{n-1}$. An antiderivative of this polynomial is $x + x^2 + x^3 + \cdots + x^n = x(1 + x + \cdots + x^{n-1}) = x[(1-x^{n})/(1-x)]$, for $x \neq 1$. Thus, the roots of this antiderivative are $0$ and all the $n$th roots of unity besides $1$. Since all these roots are of multiplicity $1$, the derivative, the associated polynomial cannot share any of the same roots. Additionally, it is clear that $1$ is not a root of the associated polynomial. Therefore, the product of the associated polynomial values evaluated at all $n$th roots of unity is nonzero. Therefore, the determinant of this matrix is a product of nonzero complex numbers and is hence nonzero. Thus, the linear space spanned by all $n$ rotation vectors is $n$-dimensional, hence their affine hull has dimension $n-1$. Then, dim $Q_n \geq n-1$ and using the earlier deduction, dim $Q_n = n-1$. \end{proof}

\begin{remark}
    One could choose any basepoint $\rho$ to project the Birkhoff polytope or the dihedral polytope to $\mathbb{R}^n$. For example, one could consider defining the dihedral permutohedron to use the basepoint $(0, 1, \cdots, n-1)$. Note that the above proposition still holds for any choice of basepoint $\rho$ such that the matrix with columns $r_1, r_2, \cdots, r_n$ has nonzero determinant.
\end{remark}

\begin{definition}
    In a polytope $P$, we define the \emph{antipode} of a vertex $v$ of the polytope as a vertex $w$ of the polytope such that $\frac{v+w}{2}$ is equal to the arithmetic mean of all the vertices of $P$; the vertex centroid. We say that $P$ is \emph{centrally symmetric} if every vertex of $P$ has an antipode.
\end{definition}

As we will show next, the dihedral permutohedron is centrally symmetric, with each rotation vertex $r_i$ \eqref{eq:rotations} having an antipodal vertex we denote by $f_i$. As an example, the dihedral permutohedron for $n = 3$ is shown below, with the vertex centroid labeled by $C$ and the antipodal pairs drawn in the same color.

\begin{figure}[h]
\centering
\begin{tikzpicture}[scale=1.4, every node/.style={font=\small}]

  \coordinate (r1) at (210:1);
  \coordinate (f2) at (270:1);
  \coordinate (r3) at (330:1);
  \coordinate (f1) at (30:1);
  \coordinate (r2) at (90:1);
  \coordinate (f3) at (150:1);

  \coordinate (C) at (0,0);

  \draw[thick]
    (r1) -- (f2) -- (r3) -- (f1) -- (r2) -- (f3) -- cycle;

  \fill[red] (r1) circle (1.4pt);
  \node[anchor=east] at (r1) {$r_1$};

  \fill[red] (f1) circle (1.4pt);
  \node[anchor=west] at (f1) {$f_1$};

  \fill[blue] (r2) circle (1.4pt);
  \node[anchor=south] at (r2) {$r_2$};

  \fill[blue] (f2) circle (1.4pt);
  \node[anchor=north] at (f2) {$f_2$};

  \fill[green!70!black] (r3) circle (1.4pt);
  \node[anchor=west] at (r3) {$r_3$};

  \fill[green!70!black] (f3) circle (1.4pt);
  \node[anchor=east] at (f3) {$f_3$};

  \fill[black] (C) circle (1.4pt);
  \node[anchor=east] at (C) {$C$};

\end{tikzpicture}
\end{figure}

\begin{prop}\label{prop:centroid}
    Recalling that $\mathbf{1}$ is the all-ones vector, the dihedral permutohedron is centrally symmetric about the vertex centroid, which is $((n+1)/2)(\mathbf{1})$. Using the rotation vertices in \eqref{eq:rotations}, the vertex centroid is also equal to \[\frac{r_1 + \cdots + r_n}{n}.\]
\end{prop}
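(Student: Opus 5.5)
The plan is to first identify the antipode explicitly, then read off both descriptions of the centroid. For a vertex $w\rho$ with $w \in D_{2n}$, the natural candidate antipode is $w'\rho$, where $w'\rho = (n+1)\mathbf{1} - w\rho$, i.e. the coordinatewise map $x \mapsto n+1-x$ applied to $w\rho$. Write $\tau$ for the permutation $i \mapsto n+1-i$ of $\{1,\dots,n\}$, so that $(n+1)\mathbf{1} - \sigma(1,\dots,n) = (\tau\sigma(1),\dots,\tau\sigma(n))$. The key step is to show that $\tau$ lies in $D_{2n}$ as a subgroup of $S_n$. Then $\tau w \in D_{2n}$ for every $w \in D_{2n}$, so $(n+1)\mathbf{1} - w\rho$ is again a vertex by the lemma identifying the vertices of $Q_n$. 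I would check this directly from the cycle descriptions \eqref{eq:odd_perm} and \eqref{eq:even_perm}, which label the $n$-gon vertices cyclically in the order $1,2,\dots,n$. With that labeling, $\tau$ is the reflection of the $n$-gon reversing this cyclic order, so it is a dihedral element. Concretely, $s_2$ fixes $1$ and sends $i \mapsto n+2-i$, and composing with the rotation $i \mapsto i-1 \pmod n$ gives $i \mapsto n+1-i$. Either $s_1s_2$ or $s_2s_1$ is this rotation, and I would verify which by evaluating both on the label $1$.

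The second step is to compute the vertex centroid. The map $x \mapsto (n+1)\mathbf{1} - x$ is a bijection on the vertex set, so the $2n$ vertices split into pairs $\{v, (n+1)\mathbf{1} - v\}$. Each pair sums to $(n+1)\mathbf{1}$; one should check that no vertex is paired with itself, and even if one were, the averaging argument below does not need the pairing to be fixed-point-free. Averaging the identity $v + \phi(v) = (n+1)\mathbf{1}$, where $\phi$ is this bijection, over all $2n$ vertices gives that the centroid $c$ satisfies $2c = (n+1)\mathbf{1}$, i.e. $c = \frac{n+1}{2}\mathbf{1}$. Consequently every vertex $v$ has antipode $\phi(v)$, since $\frac{v+\phi(v)}{2} = c$, and $Q_n$ is centrally symmetric.

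For the second formula, note that the rotation vectors $r_1,\dots,r_n$ from \eqref{eq:rotations} are the $n$ cyclic shifts of $(1,\dots,n)$. Hence every coordinate of $r_1+\cdots+r_n$ equals $1+2+\cdots+n = n(n+1)/2$, and dividing by $n$ gives $\frac{n+1}{2}\mathbf{1}$, which matches the centroid.

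The only real obstacle is the membership $\tau \in D_{2n}$, which depends on matching the paper's specific generators. I expect this to be a one-line verification, done separately for odd and even $n$ using the explicit cycle forms in \eqref{eq:odd_perm} and \eqref{eq:even_perm}.
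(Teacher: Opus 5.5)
Your proposal is correct and is essentially the paper's argument. The reversal $\tau\colon i\mapsto n+1-i$ you isolate is exactly the element $s_2s_1s_2$ the paper uses (one checks that $s_2s_1$ is the rotation $i\mapsto i-1$, so $\tau=s_2s_1s_2$ for both parities), and left multiplication by it is how the paper produces antipodes. The only difference is cosmetic: the paper computes the centroid $\frac{n+1}{2}\mathbf{1}$ directly, by noting that each value $1,\dots,n$ occurs once in every coordinate among the rotation vertices and once among the reflection vertices, whereas you obtain it by averaging the involution $v\mapsto (n+1)\mathbf{1}-v$ over the vertex set.
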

\begin{proof} 
    The vertex centroid of the dihedral permutohedron is equal to \[\frac{1}{2n}(\sum_{w\in D_{2n}}w(\rho)) = \frac{2(n(n+1)/2)}{2n}(\mathbf{1}),\]
    as the rotation vector set and reflection vector set have each integer from $1$ to $n$ appear exactly once in each of the indices. This vertex centroid is thus equal to $((n+1)/2)(\mathbf{1})$. The vertex centroid is also evidently equal to $\frac{r_1 + \cdots + r_n}{n}$. Now, consider the reflection $s_2$ \eqref{eq:odd_perm}, \eqref{eq:even_perm} and the vertex $s_2(\rho) = (1,n, n-1, n-2,\cdots, 3,2)$. After cyclically shifting all numbers in this vector to the left we get the reflection vertex $(s_2s_1s_2)(\rho) = (n, n-1, \cdots, 2,1)$. We have that $(r_1 + (s_2s_1s_2)(\rho))/2 = ((n+1)/2)(\mathbf{1})$. Using this equation, we can apply rotations to both sides repeatedly to show that every rotation of the dihedral permutohedron has a distinct reflection as its antipode. Furthermore, if we are considering a rotation $r_i$ of the dihedral permutohedron, this argument shows the antipode of $r_i$ is found by left multiplying $r_i$ by $s_2s_1s_2$.\end{proof}

\begin{remark}
    The above proposition tells us that the vertices of the dihedral permutohedron are determined by the $n$ rotation vertices (or the $n$ reflection vertices). Additionally, all of these are determined by the choice of $\rho$.
\end{remark}

Using the result above, define
\begin{equation}\label{eq:ref}
    f_i := (n+1)(\mathbf{1}) - r_i,\text{ the reflection vertex that is the antipode of $r_i$.}
\end{equation}

\begin{definition}
    The \emph{normalized volume} of a $d$-dimensional lattice polytope $P \subset \mathbb{R}^d$ is $\vol(P):= d!(\Vol(P))$, where the Euclidean volume, $\Vol(P)$, is taken with respect to the affine lattice given by the intersection of $\mathbb{Z}^d$ and the affine span of $P$.
\end{definition}

With this definition in mind, we have proven that our polytope $Q_n$ has dimension $(n-1)$ and we can choose an appropriate lattice. Consider translating $Q_n$ by $-r_1$. $Q_n - r_1$ is a subset of the hyperplane of $\mathbb{R}^n$ defined by $x_1 + x_2 + \cdots + x_{n-1} + x_n = 0$. Hence, we can embed $Q_n - r_1$ into $\mathbb{R}^{n-1}$ by expressing all points of $Q_n - r_1$ in terms of the simple root basis $e_i - e_{i+1}$ for all $1 \leq i \leq n-1$. After this embedding, the dihedral permutohedron is a genuine lattice polytope in $\mathbb{R}^{n-1}$ with respect to the lattice $\mathbb{Z}^{n-1}$, so it makes sense to consider normalized volume.

\begin{remark}
    It is a well-known result that the relative volume of the permutohedron is $n^{n-2}$. However, the normalization convention used for this result is not the same as the convention used in this paper. Hence, the standard formula for the volume of the permutohedron is not directly comparable to the main result of this paper.
\end{remark}

\section{Normalized Volume of the Dihedral Permutohedron}

Computing the normalized volumes of the dihedral permutohedron for small values of $n$ and looking for patterns suggested the following theorem, which will be proved in this section.

\begin{theorem}
    For all $n \geq 3$, the normalized volume of the dihedral permutohedron is 
    \begin{equation}
        \vol(Q_n) = n^{n-2}\binom{n-1}{\lfloor \frac{n-1}{2} \rfloor}.
    \end{equation}
\end{theorem}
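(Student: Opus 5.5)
The approach is to compute $\vol(Q_n)$ from an explicit triangulation, using the central symmetry of Proposition~\ref{prop:centroid} and the circulant structure from Proposition~\ref{prop:basis} to evaluate each simplex volume as a determinant.

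\textbf{Step 1: a reference unit.} First compute $|\det R|$ via the circulant determinant theorem. The associated polynomial $g(x)=1+2x+\cdots+nx^{n-1}$ satisfies $g(1)=n(n+1)/2$. Differentiating $x(1-x^n)/(1-x)$ gives $g(\omega^k)=n/(\omega^k-1)$ for $k\neq 0$, and $\prod_{k\ne0}(1-\omega^k)=n$. Hence $|\det R| = \tfrac{n(n+1)}{2}\, n^{n-2}$. All vertices lie on the hyperplane $\sum x_i = n(n+1)/2$, which is at lattice distance $n(n+1)/2$ from the origin. So the normalized volume of $\conv(r_1,\dots,r_n)$, measured in the lattice fixed after Proposition~\ref{prop:centroid}, is $n^{n-2}$. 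More generally, for any $n$ vertices of $Q_n$ the normalized volume of their convex hull is $|\det|/(n(n+1)/2)$ of the matrix having them as columns. This reduces every simplex volume to an $n\times n$ determinant.

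\textbf{Step 2: facet structure.} Next determine the facets of $Q_n$. Since $f_i=(n+1)\mathbf 1-r_i$, a linear functional $a$ with $\sum a_i=0$ satisfies $\langle a,f_i\rangle=-\langle a,r_i\rangle$. So a facet normal is described entirely by the values $\langle a,r_i\rangle$, $i=1,\dots,n$: the facet contains those $r_i$ where this value is maximal and those $f_j$ where it is minimal. The conjectured pattern (equal numbers of rotations and reflections on each facet) suggests normals $a$ for which $i\mapsto\langle a,r_i\rangle$ takes two extreme values on complementary sets. I would guess these sets are $\lfloor (n-1)/2\rfloor$ or $\lceil (n-1)/2\rceil$ cyclically consecutive indices. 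I would obtain such $a$ by inverting the circulant $R$, since $\langle a,r_i\rangle$ is a circulant transform of $a$. I would then prove the resulting list is complete by checking that these inequalities define a polytope whose vertices are all $2n$ points, for example via a Fourier–Motzkin or duality argument using the $D_{2n}$-symmetry.

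\textbf{Step 3: the triangulation and the sum.}
\begin{itemize}
\item For odd $n$, take the pulling triangulation from $r_1$: the simplices are cones from $r_1$ over a triangulation of each facet not containing $r_1$.
\item For even $n$, pull first from $r_1$ and then from $f_1$, which is the structure suggested for $n=4$.
\end{itemize}
Each simplex has vertices $r_1$ together with some $r_i$ and some $f_j=(n+1)\mathbf 1-r_j$. Substituting and using multilinearity, together with the fact that $\mathbf 1=\tfrac{2}{n(n+1)}\sum_i r_i$, expresses each determinant in terms of minors of $R$. Comparing with Step 1, this should give each simplex volume as $n^{n-2}$ times a small integer; the case $n=3$ gives multiples $1,1,2,2$, scaled by $n^{n-2}=3$ to the volumes shown in the figure. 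Summing over the simplices, indexed by choices of subsets of rotations and reflections, should then produce $\binom{n-1}{\lfloor (n-1)/2\rfloor}$ via a standard binomial identity.

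\textbf{Main obstacle.} I expect the hardest part to be Step 2, proving the facet list is complete, together with the combinatorial bookkeeping in Step 3 that makes the integer multiples sum to the central binomial coefficient. For the latter I would first try to show that each simplex's multiple equals the number of lattice-path or subset choices it "covers", giving a bijective proof of the sum.
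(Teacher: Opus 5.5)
Your outline follows the paper's architecture: the reference simplex $\conv(r_1,\dots,r_n)$, pulling at $r_1$ (resp.\ at $r_1$ and then $f_1$), and determinants via $f_j=(n+1)\mathbf 1-r_j$. Step~1 is correct and neater than Lemma~\ref{lem:4.10}: dividing $|\det R|=\tfrac{n(n+1)}2n^{n-2}$ by the lattice height $\tfrac{n(n+1)}2$ gives $n^{n-2}$ for every $n$.

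\textbf{Step 2 rests on a false guess.} The extreme index sets need not be cyclically consecutive. Since $R$ is invertible, as $a$ ranges over $\{\sum a_i=0\}$ the vector $v=(\langle a,r_i\rangle)_i$ ranges over \emph{all} of $\{\sum v_i=0\}$, so every split occurs. For instance, $a=(-1,1,-1,1)$ gives $v=(2,-2,2,-2)$ and the facet $\{r_1,r_3,f_2,f_4\}$ of the box $Q_4$. Only $4$ of the $6$ facets of $Q_4$ have cyclically consecutive rotation indices, so your completeness check would fail and your triangulation would miss cells. Your own $v$-coordinates settle Step~2 directly. The polar of $Q_n-m$ becomes $[-1,1]^n\cap\{\sum v_i=0\}$, whose vertices are the balanced $\pm1$ vectors, with one zero entry when $n$ is odd. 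Hence the facets are \emph{arbitrary} sets of $(n-1)/2$ rotations and $(n-1)/2$ reflections with distinct indices for odd $n$ (there are $n\binom{n-1}{(n-1)/2}$ of them). For even $n$ they are arbitrary sets of $n/2$ rotations together with the complementary $n/2$ reflections. This is the content of Lemmas~\ref{lem:4.6}--\ref{lem:4.9}.

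\textbf{Step 3 predicts the wrong outcome.} The simplices do not have volume $n^{n-2}$ times an integer. Your $n=3$ check is self-contradictory: the values $1,1,2,2$ in the figure are already normalized volumes (total $6$), whereas $3(1+1+2+2)=18$. If you actually run your method, write each vertex in the basis $r_1,\dots,r_n$ using $f_j=\tfrac2n\sum_k r_k-r_j$. The volume is then $n^{n-2}|\det C|$ for the coefficient matrix $C$. Expanding along the rotation columns, which are standard basis vectors, leaves the following minors.
\begin{itemize}
\item For a facet avoiding $r_1,f_1$: the minor $\tfrac2nJ_b-I_b$ with $b=\tfrac{n-1}2$, of determinant $\pm(1-\tfrac{2b}n)=\pm\tfrac1n$.
\item For a facet containing $f_1$: a minor with one row constantly equal to $\tfrac2n$, of determinant $\pm\tfrac2n$.
\end{itemize}
So the volumes are $n^{n-3}$ and $2n^{n-3}$ (Lemmas~\ref{lem:4.11} and \ref{lem:4.12}). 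No binomial identity is involved: one just counts $\binom{n-1}{(n-1)/2}$ facets avoiding $\{r_1,f_1\}$ and $\tfrac{n-1}2\binom{n-1}{(n-1)/2}$ facets containing $f_1$.

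\textbf{The even case omits the second pulling.} The cells $\conv(F\cup r_1)$ are not simplices. You must show that their facets avoiding $f_1$ are exactly the $n/2$ sets obtained by deleting $f_1$ and one rotation other than $r_1$ (Lemmas~\ref{lem:4.13}--\ref{lem:4.15}). Only then do you get $\tfrac n2\binom{n-1}{n/2-1}$ simplices of volume $2n^{n-3}$.
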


The first idea that came to mind in proving this formula was looking for a way to triangulate the dihedral permutohedron for all $n \geq 3$. Seeing patterns in the possible triangulations for small cases suggested that a particular type of triangulation existed for all $n \geq 3$.

\begin{definition} \cite{stanley80}
    Given a convex polytope $P$ and a subdivision $S = \{S_1, \cdots, S_n \}$ of $P$, \emph{pulling} at a vertex $v \in S_1 \cup \cdots \cup S_n$ yields the following refined subdivision:
    \[
    T = \{\, S_i : v \notin S_i \,\} \;\cup\; \{\, \conv(F \cup \{v\}) \mid \ v \in S_i,\ F \text{ a facet of } S_i,\ v \notin F \,\}.
    \]
\end{definition}

\begin{figure}[H]
\centering
\tdplotsetmaincoords{70}{120}
\begin{tikzpicture}[
  tdplot_main_coords,
  line cap=butt,
  line join=round,
  c/.style={circle,fill,inner sep=1pt},
  declare function={a=4;h=3;}
]

  \path
    (0,0,0) coordinate (A)
    (a,0,0) coordinate (B)
    (a,a,0) coordinate (C)
    (0,a,0) coordinate (D)
    (0,0,h) coordinate (E);

  \fill[red!25,opacity=0.45]
    (E) -- (B) -- (C) -- cycle;

  \fill[red!18,opacity=0.35]
    (A) -- (B) -- (C) -- cycle;

  \fill[blue!25,opacity=0.45]
    (E) -- (C) -- (D) -- cycle;

  \fill[blue!18,opacity=0.35]
    (A) -- (C) -- (D) -- cycle;

  \draw[thick]
    (E) -- (D) -- (C) -- (B) -- cycle
    (E) -- (C);

  \draw[thick,dashed]
    (E) -- (A) -- (D)
    (A) -- (B);

  \draw[thick,dashed]
    (A) -- (C);

  \path foreach \p/\g in {
    A/50,
    B/-90,
    C/-90,
    D/-90,
    E/90
  }{
    (\p) node[c]{}
    +(\g:2.5mm) node {$\p$}
  };

\end{tikzpicture}

\caption{As an example, we can take this square pyramid, start from the trivial subdivision and pull at the vertex $A$ to subdivide the square pyramid into the tetrahedra ABCE and ACDE.}
\end{figure}

\begin{theorem}
\label{thm:4.2}
    For odd $n \geq 3$, starting with the trivial subdivision and pulling at the rotation vertex $r_1$ of \eqref{eq:rotations} yields a triangulation with exactly $\binom{n-1}{(n-1)/2}$ simplices of normalized volume $n^{n-3}$ and $\frac{n-1}{2}\binom{n-1}{(n-1)/2}$ simplices of normalized volume $2n^{n-3}$. Thus, the normalized volume of the odd dihedral permutohedron is $n^{n-2}\binom{n-1}{(n-1)/2}$. 
\end{theorem}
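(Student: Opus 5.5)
The plan is to use the recipe in the definition of pulling. Starting from the trivial subdivision $\{Q_n\}$ and pulling at $r_1$ produces exactly the pyramids $\conv(F\cup\{r_1\})$, one for each facet $F$ of $Q_n$ with $r_1\notin F$. This is a triangulation precisely when every such facet is a simplex. By Proposition~\ref{prop:basis}, $\dim Q_n=n-1$, so a facet has dimension $n-2$; being a simplex means it has exactly $n-1=2m$ vertices, where $n=2m+1$.

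\textbf{Step 1: facet structure.} I will determine all facets of $Q_n$ for odd $n$. I expect each facet to be a simplex spanned by $m$ rotation vertices and $m$ reflection vertices, as the $n=3,4$ examples suggest.

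\begin{itemize}
\item I would look for facet normals $c$ of a specific form, for instance $0/1$ indicator-type vectors or alternating sums over cyclic intervals. Because of the dihedral symmetry, it suffices to find one orbit (or a few orbits) of such functionals.
\item For each candidate $c$, I will show that $\max_{w}\langle c,w\rho\rangle$ is attained at exactly $2m$ affinely independent vertices.
\item To show that the list is complete, I would argue that every edge direction or ridge of $Q_n$ is accounted for, or compare a facet count. An alternative is to show directly that the cones over the listed facets tile $Q_n$, by checking that each point's maximal functional lies in the family.
\end{itemize}

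I expect this step to be the main obstacle. My first attempt would be to guess the facet inequalities from computer data for $n=5,7$ and then verify them using the central symmetry $f_i=(n+1)\mathbf{1}-r_i$ from Proposition~\ref{prop:centroid}. That symmetry pairs each facet with its opposite facet.

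\textbf{Step 2: volumes via the rotation basis.} By Proposition~\ref{prop:centroid}, $\frac{n+1}{2}\mathbf{1}=\frac1n\sum_j r_j$, so
\[
f_i=\frac{2}{n}\sum_{j=1}^n r_j - r_i .
\]
Since the $r_j$ are linearly independent, I will write every vertex in the coordinates given by $r_1,\dots,r_n$. For a simplex $\conv(r_1\cup F)$, its volume divided by the volume of $\conv(r_1,\dots,r_n)$ equals the absolute value of the $n\times n$ determinant of these coordinate vectors. Each reflection column is $-e_i+\frac2n\mathbf{1}$, so this determinant reduces to a rank-one update of a signed $0/1$ matrix. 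It then has the form $\frac{a}{n}$, where $a$ depends only on how the rotation indices and reflection indices in $F$ are distributed. From the $n=3$ case I expect $a\in\{1,2\}$.

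I will then compute the normalized volume of $\conv(r_1,\dots,r_n)$ in the lattice of the simple-root embedding. I would use $\det R$ from the circulant theorem together with the fact that all $r_i$ lie on the hyperplane $\sum x_i=n(n+1)/2$. The conversion between the determinant and the normalized volume divides out the height of that hyperplane. This should give $n^{n-2}$, so the simplices have normalized volume $n^{n-3}$ or $2n^{n-3}$.

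\textbf{Step 3: counting.} Using the explicit facet list from Step 1, I will count the facets avoiding $r_1$ according to which determinant value they produce. The target is $\binom{n-1}{m}$ facets giving volume $n^{n-3}$ and $m\binom{n-1}{m}$ facets giving volume $2n^{n-3}$. I expect the facets to be indexed by $m$-subsets of an $(n-1)$-set (or something similar), with the factor $m$ coming from the position of a distinguished index. Summing, the total normalized volume is
\[
n^{n-3}\binom{n-1}{m}(1+2m)=n^{n-2}\binom{n-1}{m},
\]
as claimed in the theorem.
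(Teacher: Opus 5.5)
Your Steps 2 and 3 are sound in outline. Step 2 is actually a slicker route than the paper's to Lemma~\ref{lem:4.10}: the circulant formula gives $|\det R|=\frac{n(n+1)}{2}\,n^{n-2}$, and dividing by the lattice height $\frac{n(n+1)}{2}$ of the hyperplane $\sum x_i=\frac{n(n+1)}{2}$ gives $n^{n-2}$. In rotation coordinates the determinant is $\pm\bigl(1-\tfrac2n\cdot\tfrac{n-1}{2}\bigr)=\pm\tfrac1n$ when $F$ avoids $f_1$. When $F\ni f_1$ your signed permutation matrix is singular (it has columns $e_1$ and $-e_1$), but adding the $r_1$ column to the $f_1$ column turns the latter into $\tfrac2n\mathbf 1$ and yields $\pm\tfrac2n$.

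The genuine gap is Step 1, which is the real content of the theorem and which you leave as a search plan. Guessing normals from data for $n=5,7$ proves nothing for general $n$. Comparing with a facet count is circular, since you have no independent count. ``One or a few orbits'' cannot suffice either: there are $n\binom{n-1}{(n-1)/2}$ facets, hence at least $\frac12\binom{n-1}{(n-1)/2}$ orbits under the $2n$-element group $D_{2n}$. The families you propose to search are also the wrong place to look. For $n=5$, $c=(-2,0,1,1,0)$ has $c\cdot\mathbf 1=0$ and takes the values $5,5,0,-5,-5$ on $r_1,\dots,r_5$, so its maximum on $Q_5$ is attained exactly at $r_1,r_2,f_4,f_5$. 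It is therefore a facet normal with three distinct entries, not a $0/1$ vector even after rescaling and adding multiples of $\mathbf 1$.

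The missing idea is to do Step 1 in the same rotation coordinates you already use in Step 2. Describe a linear functional $\varphi$ by $a_i=\varphi(r_i)$, and put $\bar a=\frac1n\sum_j a_j$ and $b_i=a_i-\bar a$, so $\sum_i b_i=0$. From $f_i=\frac2n\sum_j r_j-r_i$ you get $\varphi(r_i)=\bar a+b_i$ and $\varphi(f_i)=\bar a-b_i$. If $M=\max_i|b_i|>0$, the face where $\varphi$ is minimized consists of the $r_i$ with $b_i=-M$ and the $f_i$ with $b_i=M$. Hence no proper face contains an antipodal pair, and a facet needs at least $n-1$ indices with $|b_i|=M$.

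For odd $n$ not all $n$ indices can have $|b_i|=M$, because an odd number of terms $\pm M$ cannot sum to $0$. So exactly $n-1$ do, say $p$ with $+M$ and $q$ with $-M$. The remaining index $k$ then has $b_k=(q-p)M$ with $|b_k|<M$, which forces $p=q=\frac{n-1}{2}$ and $b_k=0$. Conversely, every such pattern is realized by some $\varphi$; this is the paper's $\psi_\sigma$ from Lemma~\ref{lem:4.7}. The resulting $n-1$ vertices are affinely independent: after adjoining $r_k$, your rank-one computation gives determinant $\pm\bigl(1-\tfrac{2p}{n}\bigr)=\pm\tfrac1n\neq0$.

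This reproduces the classification the paper obtains from Lemmas~\ref{lem:4.4}, \ref{lem:4.7} and \ref{lem:4.8}, and Step 3 is then immediate. A facet avoiding $r_1$ either has $k=1$, giving $\binom{n-1}{(n-1)/2}$ simplices of volume $n^{n-3}$, or contains $f_1$. The latter case gives $(n-1)\binom{n-2}{(n-1)/2}=\frac{n-1}{2}\binom{n-1}{(n-1)/2}$ simplices of volume $2n^{n-3}$.
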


\begin{theorem}
\label{thm:4.3}
    For even $n > 3$, pulling at $r_1$ and then $f_1$ \eqref{eq:ref} yields a triangulation with $\frac{n}{2}\binom{n-1}{\lfloor(n-1)/2 \rfloor}$ simplices of normalized volume $2n^{n-3}$. Thus, the normalized volume of the even dihedral permutohedron is $n^{n-2}\binom{n-1}{\lfloor(n-1)/2 \rfloor}$.
\end{theorem}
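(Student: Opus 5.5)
The plan is to mirror the odd case and proceed in three stages: determine the facets of $Q_n$ for even $n=2m$, describe the simplices produced by the two pullings, and then compute one normalized volume and count simplices.

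\textbf{Stage 1: facets.} First I would find an explicit family of linear functionals $a$ with $\sum_j a_j=0$. For each, I would check that the maximum of $a\cdot x$ over $D_{2n}\rho$ is attained on a vertex set spanning an $(n-2)$-dimensional affine space. I expect $a$ to be a $\pm1$-type pattern attached to a "half" of the $n$-gon, by analogy with the permutohedron, whose facets come from subsets. I would then show these are all the facets: for any other $a$, some vertex can be moved by a generator $s_1$ or $s_2$ without decreasing $a\cdot x$, and this forces $a$ into the family. From the description I would read off that every facet contains equally many rotation and reflection vertices; the $n=4$ box, with $2+2$ vertices per facet, is the model.

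The key consequences come from central symmetry (Proposition~\ref{prop:centroid}). The map $x\mapsto (n+1)\mathbf 1-x$ sends facets to facets and swaps $r_i$ with $f_i$. So no facet contains both $r_1$ and $f_1$. I would then use the explicit list to prove the main structural claim for even $n$: every facet contains exactly one of $r_1,f_1$.

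\textbf{Stage 2: the triangulation.} Pulling at $r_1$ replaces $Q_n$ by the pyramids $\conv(F\cup\{r_1\})$, where $F$ ranges over facets not containing $r_1$. By the structural claim these are exactly the facets containing $f_1$. Pulling next at $f_1$ refines each such pyramid into $\conv(\{r_1,f_1\}\cup G)$, where $G$ ranges over the faces of $F$ of dimension $n-3$ that avoid $f_1$. For these to be simplices, each such $G$ must have exactly $n-2$ affinely independent vertices. I would prove this from the explicit vertex sets of the facets: each facet containing $f_1$ should have only one "extra" vertex beyond a simplex, as in the $n=4$ rectangle, whose two edges avoiding $f_1$ are segments. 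This step, the exact facet classification together with the simpliciality of the ridges of $F$ opposite $f_1$, is where I expect the main obstacle. I would first verify the pattern computationally for $n=6,8$ to pin down the right family of functionals before attempting the general proof.

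\textbf{Stage 3: volumes and counting.} For the volume of a single simplex, write its vertices minus $r_1$ in coordinates using $f_i=(n+1)\mathbf 1-r_i$ and $\mathbf 1=\frac{2}{n(n+1)}\sum_i r_i$, so that everything is expressed in the basis of rotation vectors of Proposition~\ref{prop:basis}. The determinant then factors as a combinatorial coefficient determinant times the circulant determinant. The latter is computed from the theorem in \cite{circ}: for $\omega^k\neq1$ the associated polynomial gives $f(\omega^k)=n/(\omega^k-1)$, the product over $k\neq0$ gives $\pm n^{n-1}$, and $f(1)=n(n+1)/2$. Finally I would convert to the simple-root lattice on the hyperplane, and I expect $2n^{n-3}$ to result uniformly. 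I would check this against the $n=4$ value $48=6\cdot 8$.

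The count of simplices is $\sum_{F\ni f_1}\#\{G\}$. Using the facet description, I would show this sum equals $\frac n2\binom{n-1}{m-1}$ via a bijection with $(m-1)$-subsets of $\{2,\dots,n\}$ carrying an extra factor $m$. Multiplying, $\frac n2\binom{n-1}{m-1}\cdot 2n^{n-3}=n^{n-2}\binom{n-1}{\lfloor (n-1)/2\rfloor}$.
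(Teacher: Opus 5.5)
Your architecture is the paper's. Pulling at $r_1$ leaves the pyramids $\conv(F\cup\{r_1\})$ over facets $F\ni f_1$. Pulling at $f_1$ then leaves the cells $\conv(\{r_1,f_1\}\cup G)$ for the facets $G$ of $F$ avoiding $f_1$. The count is $\frac n2\binom{n-1}{n/2-1}$. Reading the second step off the pyramid structure is actually cleaner than the paper: it reduces Lemmas~\ref{lem:4.13}--\ref{lem:4.15} to the face structure of $F$.

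\textbf{The gap is Stage~1, on which everything else rests.} You never determine the facets, and the completeness mechanism you propose cannot work. Moves by $s_1,s_2$ only explore the Cayley graph, which is a $2n$-cycle. But every vertex of the $(n-1)$-dimensional polytope $Q_n$ has at least $n-1\ge 3$ neighbours. Already in the $n=4$ box, $r_1$ is adjacent to $f_2=(3,2,1,4)$, which is neither $s_1\rho=f_3$ nor $s_2\rho=f_4$. A functional increasing along $f_2-r_1$ but decreasing along $f_3-r_1$ and $f_4-r_1$ makes $r_1$ a local maximum for generator moves, yet $r_1$ is not a maximizer. So the exchange argument behind the permutohedron's facet classification has no analogue here. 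Without a complete facet list you can neither assert that every facet contains exactly one of $r_1,f_1$ nor describe the faces of $F$.

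\textbf{The missing idea is to work in the basis $r_1,\dots,r_n$} of Proposition~\ref{prop:basis}, where $f_i=2m-r_i$ with $m=\frac1n\sum_k r_k$. This gives the affine-dependence criteria of Lemma~\ref{lem:4.5}. The only relations come from $r_i+f_i=2m$, and, for one vertex from each antipodal pair with $n/2$ of each type, from $\sum r-\sum f=0$.

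Facets then fall out directly. For $|T|=n/2$, let $\varphi_T$ be $0$ on $r_j$ for $j\in T$ and $1$ on $r_j$ for $j\notin T$. Then $\varphi_T(f_j)=1-\varphi_T(r_j)$, so $\varphi_T\ge 0$ on $Q_n$ with zero set $\{r_j\}_{j\in T}\cup\{f_j\}_{j\notin T}$.

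Completeness follows from three facts:
\begin{itemize}
\item a facet contains no antipodal pair;
\item one-per-pair $n$-sets with unequal type counts are affinely independent;
\item the paper's centroid computation puts any $(n-1)$-set with more than $n/2$ vertices of one type in the interior, and the remaining $(n-1)$-sets lie inside a balanced facet of the same dimension.
\end{itemize}
The same circuit $\sum r-\sum f=0$ makes each $F$ a free sum of two $(n/2-1)$-simplices. Its facets are obtained by deleting one rotation and one reflection. This gives exactly $n/2$ simplicial $G$ avoiding $f_1$, which is the step you flagged.

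\textbf{Stage~3 is a valid alternative to Lemmas~\ref{lem:4.10} and \ref{lem:4.12}, but the arithmetic is off.} The product is $\prod_{k=1}^{n-1}\frac{n}{\omega^k-1}=\frac{n^{n-1}}{(-1)^{n-1}n}=\pm n^{n-2}$, not $\pm n^{n-1}$, so $|\det R|=\frac{n+1}{2}n^{n-1}$. You also need to use the vertices themselves rather than their differences from $r_1$, so that the coefficient matrix is square. Then divide by the lattice height $\frac{n(n+1)}2$ of the hyperplane. This yields $n^{n-2}$ for the rotation simplex. A cell's coefficient determinant in the rotation basis is $\pm\frac2n$, so each cell has volume $2n^{n-3}$. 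With your value every volume would be too large by a factor of $n$, which your $n=4$ check would expose.
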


We will now prove these nice triangulations exist for the dihedral permutohedron by getting a description of the facets of the dihedral permutohedron. The proof of this result will proceed differently for odd $n$ and even $n$.

\begin{lemma}[Affine Independence Criteria For Odd $n$]
\label{lem:4.4}
For odd $n$, for any set of vertices of the dihedral permutohedron, the set is linearly independent and hence affinely independent if and only if it has at most $n$ vertices and has fewer than two pairs of vertices that are related by central symmetry.

\end{lemma}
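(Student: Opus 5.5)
The plan is to work entirely in the basis of $\mathbb{R}^n$ given by the rotation vertices. By Proposition~\ref{prop:basis}, $r_1,\dots,r_n$ form a basis. By Proposition~\ref{prop:centroid}, $r_1+\cdots+r_n=\frac{n(n+1)}{2}\mathbf{1}$. Substituting this into \eqref{eq:ref} gives
\[
f_i=(n+1)\mathbf{1}-r_i=\frac{2}{n}\sum_{j=1}^n r_j - r_i .
\]
So each reflection vertex has an explicit coordinate vector in the rotation basis: every coordinate equals $\frac{2}{n}$, except the $i$-th, which equals $\frac{2}{n}-1$. Write a vertex set as $S=\{r_i: i\in A\}\cup\{f_i : i\in B\}$ with $A,B\subseteq\{1,\dots,n\}$. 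The antipodal pairs in $S$ are then indexed exactly by $A\cap B$.

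\textbf{Necessity.} If $|S|>n$, then $S$ is dependent since it lies in $\mathbb{R}^n$. If $i\neq j$ are both in $A\cap B$, then $r_i+f_i=(n+1)\mathbf{1}=r_j+f_j$, which is a nontrivial linear relation.

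\textbf{Sufficiency.} Suppose $|A|+|B|\le n$ and $|A\cap B|\le 1$, and take a relation $\sum_{i\in A}a_i r_i+\sum_{i\in B}b_i f_i=0$. Set $\beta=\sum_{i\in B} b_i$ and read off the coefficient of $r_j$: it gives $a_j-b_j+\frac{2}{n}\beta=0$, where $a_j=0$ for $j\notin A$ and $b_j=0$ for $j\notin B$.

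If $A\cup B\ne\{1,\dots,n\}$, any $j$ outside $A\cup B$ forces $\beta=0$. Then $a_j=0$ for $j\in A\setminus B$ and $b_j=0$ for $j\in B\setminus A$. At the single possible index $k\in A\cap B$ we get $a_k=b_k$, and then $\beta=b_k=0$. Hence every coefficient vanishes.

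If $A\cup B=\{1,\dots,n\}$, the bound $|A|+|B|\le n$ forces $A\cap B=\emptyset$, so $B\setminus A=B$. Then $b_j=\frac{2}{n}\beta$ for every $j\in B$. Summing over $B$ gives $\beta=\frac{2|B|}{n}\beta$. Because $n$ is odd, $2|B|\neq n$, so $\beta=0$, and again all coefficients vanish. This is the one step where oddness is essential; for even $n$ it is exactly where a dependency among $n/2$ rotations and $n/2$ complementary reflections can appear.

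\textbf{Affine versus linear independence.} Every vertex lies on the hyperplane $x_1+\cdots+x_n=n(n+1)/2$, which does not pass through the origin. For points on such a hyperplane, an affine relation $\sum c_v v=0$ with $\sum c_v=0$ is just a special linear relation, so linear independence implies affine independence. Conversely, applying the functional $x_1+\cdots+x_n$ to any linear relation shows its coefficients sum to $0$, so the two notions coincide and the "if and only if" holds for both.

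The main obstacle is the bookkeeping of the case split, especially the case where $A\cup B$ covers every index. That case is handled by the parity argument above.
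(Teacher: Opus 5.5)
Your proof is correct and is essentially the paper's argument: both expand everything in the rotation basis using $f_i=\frac{2}{n}\sum_k r_k-r_i$, read off coefficients, and use the oddness of $n$ to rule out $\beta=\frac{2|B|}{n}\beta$ with $\beta\neq 0$ when there are no antipodal pairs. The differences are only organizational. You treat arbitrary vertex sets directly through the split $A\cup B\neq\{1,\dots,n\}$ versus $A\cup B=\{1,\dots,n\}$, whereas the paper proves the claim for $n$-element sets with zero or one antipodal pair and then passes to subsets. You also state explicitly, via the hyperplane $x_1+\cdots+x_n=n(n+1)/2$ missing the origin, why linear and affine independence coincide here, which the paper leaves implicit.
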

    
\begin{proof}
    If the set of vectors contains two pairs of antipodal vertices, for example, $r_1, f_1$ and $r_2, f_2$, then by \eqref{eq:ref} we have the equation $(r_1 + f_1)/2 - (r_2+f_2)/2 = 0$, showing this set is affinely dependent. This shows one direction of the if and only if. To prove the other direction, consider the vectors $r_1, r_2, \cdots, r_{n}$, which we know to be linearly independent by our previous result. Let $m := \frac{r_1 + \cdots + r_n}{n}$. By Proposition \ref{prop:centroid}, $m = \frac{n+1}{2}\mathbf{1}$, so \eqref{eq:ref} gives
    \[f_i = (n+1)\mathbf{1} - r_i = 2m - r_i.\]
    
    Suppose $c_1f_1 + c_2f_2 + \cdots + c_if_i + c_{i+1}r_{i+1} + \cdots + c_{n}r_{n} = 0$. Substituting $f_k = 2m - r_k$ from \eqref{eq:ref}, we have
    \[
    c_1(2m - r_1) + \cdots + c_i(2m - r_i) + c_{i+1}r_{i+1} + \cdots + c_{n}r_{n} = 0.
    \]
    The left-hand side is equal to $-c_1r_1 + \cdots -c_i(r_i) + c_{i+1}r_{i+1} + \cdots + c_{n}r_{n} + 2(c_1 + \cdots + c_i)m$. Now, we can distribute the coefficients of $m$ to see that this is equal to
    \[
    (2/n(c_1 + \cdots + c_i) - c_1)r_1 + \cdots + (2/n(c_1 + \cdots + c_i)-c_i)r_i + (2/n(c_1 + \cdots + c_i) 
    \]
    
    \[
     + c_{i+1})r_{i+1} + \cdots + (2/n(c_1 + \cdots + c_i)+ c_n)r_n.
    \]
    Since these vectors $r_1, \ldots, r_n$ are linearly independent, we have that each of these coefficients is zero. Namely, if we let $x := c_1 + \cdots + c_i$, then $c_1 = c_2 = \cdots = c_i$ and thus we have that $x = i(2/n)(x) = (2i/n)(x)$. This equation for $x \neq 0$ is true if and only if $i = (n/2)$, which cannot be true for any integer $i$ for odd $n$. Thus, $x = 0$ and all the coefficients are $0$, so $f_1, f_2, \cdots f_i, r_{i+1}, \cdots, r_n$ are linearly independent and hence affinely independent. Notice that our proof did not depend on the specific indices of the rotations and reflections, only the existence of $n-i$ rotations and $i$ reflections in a list. Hence, any set of $n$ vertices of the dihedral permutohedron with no antipodal pairs is linearly independent. Taking subsets, any set of vertices with no antipodal pairs is linearly independent. Now, suppose we have $n$ vertices with one antipodal pair. Fix a permutation $\sigma \in S_n$ and an integer $1 \le j \le n-1$, and consider the vertex list \[
r_{\sigma(1)}, \cdots, r_{\sigma(j)}, \; f_{\sigma(1)}, f_{\sigma(j+1)}, \cdots, f_{\sigma(n-1)},\] consisting of the $j$ rotations indexed by $\sigma(1), \ldots, \sigma(j)$, together with
$n-j$ reflections: the reflection $f_{\sigma(1)}$ antipodal to $r_{\sigma(1)}$, and the
$n-1-j$ further reflections indexed by $\sigma(j+1), \ldots, \sigma(n-1)$ (all distinct
from $\sigma(1), \ldots, \sigma(j)$, since $\sigma$ is a permutation). This list has
exactly one antipodal pair, namely $\{r_{\sigma(1)}, f_{\sigma(1)}\}$. Suppose that:
\[
c_1 r_{\sigma(1)} + \cdots + c_j r_{\sigma(j)} + c_{j+1} f_{\sigma(1)} + c_{j+2} f_{\sigma(j+1)} + \cdots + c_n f_{\sigma(n-1)} = 0.
\]

Substituting $f_{\sigma(1)} = 2m - r_{\sigma(1)}$ and $f_{\sigma(k)} = 2m - r_{\sigma(k)}$
for $k = j+1, \ldots, n-1$ (by \eqref{eq:ref}), the left-hand side becomes:
\[
c_1 r_{\sigma(1)} + \cdots + c_j r_{\sigma(j)} + c_{j+1}(2m - r_{\sigma(1)}) + c_{j+2}(2m - r_{\sigma(j+1)}) + \cdots + c_n(2m - r_{\sigma(n-1)}),
\]
which is equal to
\[
(c_1 - c_{j+1}) r_{\sigma(1)} + \cdots + c_j r_{\sigma(j)} - c_{j+2} r_{\sigma(j+1)} - \cdots - c_n r_{\sigma(n-1)} + 2(c_{j+1} + \cdots + c_n)m.
\]
Writing $m = \frac{1}{n}\sum_{k=1}^n r_k$ and distributing the coefficient of $m$
across all $n$ rotation vectors $r_1, \ldots, r_n$, this becomes a linear combination
of $r_1, \ldots, r_n$ in which the coefficient of $r_{\sigma(1)}$ is
$c_1 - c_{j+1} + \frac{2}{n}(c_{j+1} + \cdots + c_n)$, the coefficient of $r_{\sigma(k)}$
for $2 \le k \le j$ is $c_k + \frac{2}{n}(c_{j+1} + \cdots + c_n)$, the coefficient of
$r_{\sigma(k)}$ for $j+1 \le k \le n-1$ is $-c_{k+1} + \frac{2}{n}(c_{j+1} + \cdots + c_n)$,
and the coefficient of every remaining $r_l$ (those with $l \notin \{\sigma(1), \ldots, \sigma(n-1)\}$)
is $\frac{2}{n}(c_{j+1} + \cdots + c_n)$. Since $r_1, \ldots, r_n$ are linearly independent, every one of these coefficients
must vanish. In particular, letting \[y := c_{j+1} + \cdots + c_n,\] the coefficient of
any remaining $r_l$ gives $\frac{2}{n} y = 0$, so $y = 0$. Then the coefficient of
$r_{\sigma(k)}$ for $2 \le k \le j$ gives $c_k = 0$; the coefficient of $r_{\sigma(k)}$ for
$j+1 \le k \le n-1$ gives $c_{k+1} = 0$, i.e. $c_{j+2} = \cdots = c_n = 0$; and since
$y = c_{j+1} + \cdots + c_n = 0$ with $c_{j+2} = \cdots = c_n = 0$, we get $c_{j+1} = 0$
as well. Finally, the coefficient of $r_{\sigma(1)}$ gives $c_1 - c_{j+1} = 0$, so
$c_1 = 0$. Thus every coefficient vanishes, and this one-antipodal-pair set is linearly independent. Once again, taking subsets of these $n$-element sets shows that any set of vertices with one antipodal pair is linearly independent.\end{proof}

\begin{cor}
    The odd dihedral permutohedron is simplicial.
\end{cor}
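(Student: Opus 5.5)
The plan is to show that every proper face of $Q_n$ (for odd $n$) has affinely independent vertex sets, using Lemma \ref{lem:4.4}. Since $Q_n$ has dimension $n-1$ by Proposition \ref{prop:basis}, a facet $F$ lies in an affine hyperplane $H$ of $\operatorname{aff}(Q_n)$ that does not contain the vertex centroid $c=\frac{n+1}{2}\mathbf{1}$. This holds because $c$ is an interior point of $Q_n$, being the average of all vertices of a full-dimensional polytope in its affine hull. Simpliciality means that the vertex set of $F$ is affinely independent, so that $F$ has exactly $n-1$ vertices. Equivalently, every proper face has affinely independent vertices.

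The key step is to show that a facet $F$ contains no antipodal pair. Suppose $r_i, f_i \in F$. Then $c = \frac{r_i+f_i}{2}$, by Proposition \ref{prop:centroid} and \eqref{eq:ref}, lies in $F$. This contradicts that $c$ is in the relative interior of $Q_n$ while $F$ lies in the boundary. Hence the vertex set $V(F)$ has no antipodal pairs at all.

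It remains to bound the size of $V(F)$. If $|V(F)| \le n$, then Lemma \ref{lem:4.4} with zero antipodal pairs gives linear, hence affine, independence directly. Suppose instead that $|V(F)| \ge n+1$. Since each of the $n$ antipodal pairs $\{r_i,f_i\}$ contributes at most one vertex, $V(F)$ has at most $n$ elements, so this case is impossible. Thus $V(F)$ is affinely independent. An affinely independent set spanning a facet of an $(n-1)$-polytope has exactly $n-1$ points, so every facet is a simplex and $Q_n$ is simplicial.

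The main subtlety, rather than an obstacle, is justifying that $c$ lies in the relative interior and not merely in $Q_n$. I would argue this by noting that the barycenter of the vertex set of any polytope lies in its relative interior. Alternatively, if $c$ lay on a supporting hyperplane, then every vertex would have to lie on it, since $c$ is a positive average of the vertices, contradicting full dimensionality.
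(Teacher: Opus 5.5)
Your proposal is correct and follows the paper's argument: a proper face containing an antipodal pair $r_i,f_i$ would contain the vertex centroid $\frac{r_i+f_i}{2}=\frac{n+1}{2}\mathbf{1}$, which is impossible, and Lemma \ref{lem:4.4} then makes the vertex set affinely independent. You also make explicit two points the paper leaves implicit: the bound $|V(F)|\le n$ needed for the hypothesis of Lemma \ref{lem:4.4}, and why the centroid lies in the relative interior.
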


\begin{proof}
Consider some proper face of the dihedral permutohedron. If this face contains an antipodal pair $r_i, f_i$, then this face contains, by \eqref{eq:ref}, $\frac{r_i + f_i}{2} = \frac{(n+1)}{2}\mathbf{1}$, the vertex centroid of the dihedral permutohedron. However, faces are contained in the boundary so they certainly cannot contain the vertex centroid. Thus, this face has no antipodal pairs and by lemma \ref{lem:4.4} the face is a simplex. 
\end{proof}

\begin{lemma}[Affine Independence Criteria For Even $n$] \label{lem:4.5}
For even $n$, given a set of $<n$ vertices of the dihedral permutohedron, the set is affinely independent if and only if there are fewer than two antipodal pairs. For a set of $n$ vertices, the set is affinely independent if and only if it either has exactly one antipodal pair, or has no antipodal pairs and unequal numbers of rotation and reflection vertices.
\end{lemma}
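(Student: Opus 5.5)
The plan is to reduce everything to linear algebra in the basis $r_1,\dots,r_n$, exactly as in the proof of Lemma \ref{lem:4.4}, but to organize the computation uniformly rather than by listing vertices. First I will note that every vertex of $Q_n$ lies in the affine hyperplane $x_1+\cdots+x_n = n(n+1)/2$, which does not pass through the origin. A linear relation $\sum \lambda_v v = 0$ among such vertices therefore forces $\sum \lambda_v = 0$, and any affine relation is in particular linear. So affine independence coincides with linear independence, and I may work with linear relations throughout.

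By Proposition \ref{prop:basis}, the $r_i$ form a basis of $\mathbb{R}^n$. By Proposition \ref{prop:centroid} and \eqref{eq:ref}, $f_i = 2m - r_i$ with $m = \frac{1}{n}\sum_k r_k$. Hence, in $r$-coordinates, $r_i = e_i$ and $f_i = \frac{2}{n}\mathbf{1} - e_i$. Encode a vertex set by the index set $A$ of its rotations and the index set $B$ of its reflections; antipodal pairs correspond to $A\cap B$. Suppose $\sum_{a\in A} c_a r_a + \sum_{b\in B} d_b f_b = 0$ and set $y=\sum_{b\in B} d_b$. Reading off the $l$-th coordinate gives four kinds of equations:
\begin{itemize}
\item $\frac{2}{n}y=0$ for $l\notin A\cup B$;
\item $c_l+\frac{2}{n}y=0$ for $l\in A\setminus B$;
\item $-d_l+\frac{2}{n}y=0$ for $l\in B\setminus A$;
\item $c_l-d_l+\frac{2}{n}y=0$ for $l\in A\cap B$.
\end{itemize}

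\textbf{Case $A\cup B\neq\{1,\dots,n\}$.} This covers every set of fewer than $n$ vertices, and every set of $n$ vertices with at least one antipodal pair. The first equation gives $y=0$. Then $c_l=0$ on $A\setminus B$, $d_l=0$ on $B\setminus A$, and $c_l=d_l$ on $A\cap B$, with the single remaining constraint $\sum_{l\in A\cap B} d_l = y = 0$.
\begin{itemize}
\item If $|A\cap B|\le 1$, this forces all coefficients to vanish, so the set is independent.
\item If $|A\cap B|\ge 2$, taking $d=1$ and $d=-1$ on two pairs (with matching $c$) gives a nontrivial relation, recovering $(r_i+f_i)-(r_j+f_j)=0$, so the set is dependent.
\end{itemize}

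\textbf{Case $A\cup B=\{1,\dots,n\}$ with exactly $n$ vertices.} Then $A$ and $B$ are disjoint, so there are no antipodal pairs. The equations give $d_l=\frac{2}{n}y$ for all $l\in B$, and summing yields $y\bigl(1-\frac{2|B|}{n}\bigr)=0$.
\begin{itemize}
\item If $|B|\neq n/2$, i.e. the numbers of rotations and reflections differ, then $y=0$ and all coefficients vanish, so the set is independent.
\item If $|B|=n/2$ (possible precisely because $n$ is even), choosing $d_l=1$ on $B$ and $c_l=-1$ on $A$ satisfies every equation, giving the dependence $\sum_{b\in B} f_b = \sum_{a\in A} r_a$.
\end{itemize}

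Combining the two cases yields both statements of the lemma. A set of $n$ vertices with two or more antipodal pairs falls under the first case and is dependent, as required.

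The main point requiring care is the bookkeeping that the size conditions in the statement match the two cases. A set of fewer than $n$ vertices, or of $n$ vertices containing a pair, cannot cover all $n$ indices. Conversely, $n$ vertices without pairs must cover them. I will also double-check the equal-split relation directly in coordinates as a sanity check, using $\sum_{b\in B}f_b = n m - \sum_{b\in B} r_b = \sum_{a\notin B} r_a$.
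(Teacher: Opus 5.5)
Your proof is correct and uses the same mechanism as the paper. You expand everything in the rotation basis via $f_i = 2m - r_i$, use an index not covered by the vertex set to force the reflection-coefficient sum to vanish, and in the fully covering case reduce to $y(1 - 2|B|/n) = 0$, which has nontrivial solutions only at the equal split. The only difference is organizational: your single split on whether $A \cup B = \{1,\dots,n\}$ handles sets of every size at once, so you avoid the paper's reductions (taking subsets of $n$-element sets, or adjoining vertices to a pair-free set of fewer than $n$ vertices), and your hyperplane remark settles the passage between linear and affine dependence once instead of checking coefficient sums case by case.
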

    
\begin{proof}
    The fact that two antipodal pairs yield affine dependence follows from the same argument as the odd case. Recall the proof from the previous lemma \ref{lem:4.4} for the case of no antipodal pairs with $n$ vertices. For even $n$, setting $i$ = $n/2$ in the previous proof yields a solution, which is the case that half of our vectors are rotations and half are reflections. Thus, it is possible to get a solution to that equation for even $n$. In more detail, fix $\sigma \in S_n$ and select $n/2$ rotations and the corresponding $n/2$ reflections that yield no antipodal pairs \[
    r_{\sigma(1)}, \cdots r_{\sigma(n/2)}, f_{\sigma(n/2 + 1)}, \cdots f_{\sigma(n)}.
    \] Using the definition of the $f_i$ in \eqref{eq:ref}, we have that $r_{\sigma(1)} +  \cdots + r_{\sigma(n/2)} - f_{\sigma(n/2 + 1)} - \cdots - f_{\sigma(n)} = r_{\sigma(1)} +  \cdots + r_{\sigma(n/2)} + r_{\sigma(n/2 + 1)} + \cdots + r_{\sigma(n)} - nm = 0$. Since the coefficients in this linear combination sum to $0$, these vectors are affinely dependent. For the one antipodal pair case for any set, we can refer to the same proof as the odd case, as that argument had no dependence on parity. For sets of vertices with no antipodal pairs and $<n$ elements, we can adjoin vertices until we get a set with one antipodal pair and $n$ vertices. This set is linearly independent by the previous case, so the original set also had to be linearly independent.\end{proof}

Now that we have criteria for when a set of vertices of the dihedral permutohedron are affinely independent, we can classify the facets of the dihedral permutohedron for the even and odd cases.

\begin{cor}
    The even dihedral permutohedron is almost simplicial; all proper faces are simplices except for the facets.
\end{cor}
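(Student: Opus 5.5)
The plan is to argue exactly as in the odd corollary, with Lemma \ref{lem:4.5} in place of Lemma \ref{lem:4.4}, and to treat facets and lower-dimensional faces separately.

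\textbf{Antipodal pairs.} First, no proper face contains an antipodal pair. If a face $F$ contained both $r_i$ and $f_i$, then by \eqref{eq:ref} it would contain $\frac{r_i+f_i}{2}=\frac{n+1}{2}\mathbf{1}$. By Proposition \ref{prop:centroid} this point is the vertex centroid. Since $Q_n$ is full-dimensional in its affine hull (Proposition \ref{prop:basis}), the centroid lies in the relative interior, so it cannot lie in a proper face. Hence the vertex set of every proper face has no antipodal pairs.

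\textbf{Faces of dimension at most $n-3$.} Let $F$ be such a face and $V$ its vertex set. I would show $V$ is affinely independent, which means $F$ is a simplex. Suppose instead $V$ were affinely dependent. By the previous paragraph $V$ has no antipodal pairs, so Lemma \ref{lem:4.5} forces $|V|\ge n$. Moreover, every subset of $V$ of size $<n$ has no antipodal pairs and so is affinely independent. Take any $n$ vertices in $V$. By Lemma \ref{lem:4.5} they can only be dependent in the balanced configuration: $n/2$ rotations $r_{\sigma(1)},\dots,r_{\sigma(n/2)}$ and $n/2$ reflections $f_{\sigma(n/2+1)},\dots,f_{\sigma(n)}$ with all indices distinct. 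The affine dependence
\[r_{\sigma(1)}+\cdots+r_{\sigma(n/2)}-f_{\sigma(n/2+1)}-\cdots-f_{\sigma(n)}=0\]
from the proof of Lemma \ref{lem:4.5} gives one affine relation. Since any $n-1$ of these points are independent, the $n$ points span an affine space of dimension exactly $n-2$. Therefore $\dim F\ge n-2$, a contradiction. So every face of dimension at most $n-3$ is a simplex.

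\textbf{Facets.} I must also show the facets are genuinely not simplices, since the statement claims they are the exceptions. This is the step I expect to be the main obstacle, because it needs a description of facets rather than just independence criteria. My first attempt is a counting argument. Suppose, for contradiction, that every facet were a simplex. Then every facet has $n-1$ vertices, no antipodal pairs, and every ridge is shared by exactly two facets, so $Q_n$ would be a simplicial polytope. I would then look for an explicit facet with $n$ vertices, taken from the balanced family above. The natural candidate is the face cut out by an inequality such as $x_1+x_2\ge c$, or more generally a linear functional that is constant on the orbit structure; the $n=4$ box is the model, where each facet has $2$ rotations and $2$ reflections. Exhibiting one such non-simplex facet suffices, because the dihedral group acts transitively on vertices and maps facets to facets, so the analysis transfers across the orbit. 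Showing that all facets behave this way would need the facet classification developed later. For the corollary itself, I would present the face-dimension argument as the core and note the non-simplicial facets via the explicit example.
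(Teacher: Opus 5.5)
Your argument for faces of dimension at most $n-3$ is correct and is essentially the paper's proof: both rule out antipodal pairs via the vertex centroid and then apply Lemma~\ref{lem:4.5}. Your version is also more self-contained. The paper defers to a later result for the claim that a face with $n$ vertices must be a facet. Your observation that any $n-1$ vertices of a face are affinely independent settles this on the spot, since it means a face with at least $n-1$ vertices already has dimension at least $n-2$. You do not even need the detour through the balanced dependence relation.

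Your final paragraph goes beyond what the paper proves here. The paper's argument only shows that non-facet faces are simplices; the existence of facets with $n$ vertices comes from Lemma~\ref{lem:4.6}. As written, however, that paragraph would not work.
\begin{itemize}
\item The ``counting argument'' is circular: assuming every facet is a simplex is just assuming $Q_n$ is simplicial, and no contradiction is derived.
\item The inequality $x_1+x_2\ge c$ does not cut out a facet. For even $n\ge 4$ the minimum value $3$ is attained only at $r_1$ and $f_{n-1}$, so it defines an edge.
\end{itemize}
The correct witness is the functional $\varphi_\sigma$ of Definition~\ref{def:linear_functional}. Because $\varphi_\sigma((n+1)\mathbf{1})=1$, it vanishes exactly on $r_{\sigma(1)},\dots,r_{\sigma(n/2)},f_{\sigma(n/2+1)},\dots,f_{\sigma(n)}$ and equals $1$ on every other vertex. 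So $\varphi_\sigma\ge 0$ cuts out a proper face with $n$ vertices. By your own dimension count, this face is an $(n-2)$-dimensional facet that is not a simplex.
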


\begin{proof}
    Firstly, any set of $>n$ vertices must contain an antipodal pair, so none of these sets can form a facet. For any set of $<n$ vertices whose convex hull forms a face, there must be no antipodal pairs, otherwise the face contains the vertex centroid, a contradiction. Thus, lemma \ref{lem:4.5} shows that the convex hull is a simplex, as a face cannot contain an antipodal pair. As we will prove later in this paper, if a set of $n$ vertices forms a face of the dihedral permutohedron, this face must be a facet. Hence, any face that isn't a facet must have $<n$ vertices and is therefore a simplex.
\end{proof}

The fact that the even dihedral permutohedron is not quite simplicial will introduce one additional layer of difficulty in proving the main theorem of this paper for even $n$ later. 

\begin{definition}
\label{def:linear_functional}
    Given some $\sigma \in S_n$, since $r_1, \cdots, r_n$ form a basis of $\mathbb{R}^n$ by proposition \ref{prop:basis}, we can define $\varphi_\sigma$ to be the unique linear functional that vanishes on $r_{\sigma(1)}, r_{\sigma(2)}, \cdots ,r_{\sigma(n/2)}$ and is equal to $1$ when evaluated at the other $\frac{n}{2}$ rotation vertices.
\end{definition}

\begin{lemma}
\label{lem:4.6}
    For even $n > 3$, \begin{equation}\conv(
    r_{\sigma(1)}, r_{\sigma(2)}, \cdots ,r_{\sigma(n/2)}, f_{\sigma(n/2+1)}, \cdots, f_{\sigma(n)})
    \end{equation}
    is a facet for any $\sigma \in S_n$.
\end{lemma}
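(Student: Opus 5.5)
The plan is to use the linear functional $\varphi_\sigma$ of Definition \ref{def:linear_functional} as a supporting functional. I will show that $\varphi_\sigma \ge 0$ on $Q_n$ and that its zero set on the vertices is exactly the $n$ listed vertices. Then I will check that the resulting face has dimension $n-2$, which is one less than $\dim Q_n = n-1$ from Proposition \ref{prop:basis}.

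\textbf{Step 1: values of $\varphi_\sigma$ on the vertices.} On rotations, $\varphi_\sigma$ is $0$ on $r_{\sigma(1)},\dots,r_{\sigma(n/2)}$ and $1$ on $r_{\sigma(n/2+1)},\dots,r_{\sigma(n)}$, by definition. For reflections, write $m = \frac{r_1+\cdots+r_n}{n}$ as in Proposition \ref{prop:centroid}. By linearity,
\[
\varphi_\sigma(m) = \tfrac{1}{n}\cdot \tfrac{n}{2} = \tfrac12 .
\]
By \eqref{eq:ref}, $f_i = 2m - r_i$, so
\[
\varphi_\sigma(f_i) = 1 - \varphi_\sigma(r_i).
\]
Hence $\varphi_\sigma(f_{\sigma(k)}) = 0$ for $k > n/2$ and $\varphi_\sigma(f_{\sigma(k)}) = 1$ for $k \le n/2$. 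So among the $2n$ vertices of $Q_n$, $\varphi_\sigma$ vanishes on exactly
\[
r_{\sigma(1)},\dots,r_{\sigma(n/2)},\, f_{\sigma(n/2+1)},\dots,f_{\sigma(n)},
\]
and equals $1$ on the remaining $n$ vertices.

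\textbf{Step 2: this set is a face.} Since $Q_n$ is the convex hull of its vertices, $\varphi_\sigma \ge 0$ on all of $Q_n$. The hyperplane $\{\varphi_\sigma = 0\}$ is therefore a supporting hyperplane. Its intersection with $Q_n$ is the convex hull of the vertices on which $\varphi_\sigma$ vanishes, which is exactly the set in the statement. This face is proper because $\varphi_\sigma$ equals $1$ at some vertices.

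\textbf{Step 3: dimension.} By Lemma \ref{lem:4.5}, the $n$ vertices themselves are affinely dependent, since they have no antipodal pair and equal numbers of rotations and reflections. So I only need $n-1$ affinely independent points among them. Drop any one vertex. The remaining $n-1 < n$ vertices contain no antipodal pair: the rotation indices $\sigma(1),\dots,\sigma(n/2)$ and reflection indices $\sigma(n/2+1),\dots,\sigma(n)$ are disjoint because $\sigma$ is a permutation. So Lemma \ref{lem:4.5} makes these $n-1$ vertices affinely independent. The face therefore has dimension $n-2 = \dim Q_n - 1$, so it is a facet.

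The only step needing care is Step 1, namely evaluating $\varphi_\sigma$ on reflections through the identity $\varphi_\sigma(m)=\tfrac12$. This is where the even count $n/2$ matters. The rest is a direct application of Lemma \ref{lem:4.5}.
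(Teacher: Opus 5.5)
Your proof is correct and follows essentially the same route as the paper: you use the same functional $\varphi_\sigma$ as the supporting functional, you get its values on the reflections from $f_i = 2m - r_i$ (the paper phrases this as $\varphi_\sigma((n+1)\mathbf{1}) = 1$), and you use Lemma \ref{lem:4.5} to obtain $n-1$ affinely independent vertices for the dimension count. The only cosmetic difference is that you bound the dimension from above using the affine dependence of all $n$ vertices, whereas the paper uses that the face is proper; either argument suffices.
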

\begin{proof}
    Consider the linear functional $\varphi_\sigma$ (\ref{def:linear_functional}). Then, by \eqref{eq:ref},\[
    \varphi_\sigma(f_{\sigma(j)}) = \varphi_\sigma((n+1)(\mathbf{1}) - r_{\sigma(j)})\]
    Note that $(n+1)(\mathbf{1}) = 2((n+1)/2)(\mathbf{1}) = 2(r_1 + \cdots + r_{n})/n$. Thus, $\varphi_\sigma((n+1)(\mathbf{1})) = (2/n(n/2))(1) = 1$. This means $\varphi_\sigma(f_{\sigma(j)}) = 0$ for $j > n/2$ and $\varphi_\sigma(f_{\sigma(j)}) = 1$ for $j \leq n/2$. Indeed, this linear functional vanishes on exactly the $n$ desired vertices. Furthermore, the linear inequality $\varphi_\sigma(v) \geq 0$ is true for all vertices of the dihedral permutohedron, since it was constructed to be true for the rotations and is proven to be true for the reflections by the computation above. This inequality is hence a valid inequality of the dihedral permutohedron and these vertices form a face of the dihedral permutohedron. By the affine dependence criteria, if we remove one of the $n$ vertices of our face and take the convex hull, we get a simplex of dimension $n-2$. Since this face is not the entire dihedral permutohedron, the only possibility is that these $n$ vertices form a facet. \end{proof}

\begin{lemma}
\label{lem:4.7}
    For odd $n \geq 3$, \begin{equation} \label{eq:facets}
    F_\sigma = \conv(r_{\sigma(1)}, r_{\sigma(2)}, \cdots ,r_{\sigma((n-1)/2)}, f_{\sigma((n+1)/2)}, \cdots, f_{\sigma(n-1)}) \end{equation}
    is a facet for any $\sigma \in S_n$.
\end{lemma}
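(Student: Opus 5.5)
The plan is to imitate the proof of Lemma \ref{lem:4.6}. I will build an explicit linear functional that vanishes on the $n-1$ listed vertices and is nonnegative on every vertex of $Q_n$. Then I will use Lemma \ref{lem:4.4} to see that the resulting face has dimension $n-2$.

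\textbf{Step 1: the functional.} Put $k = (n-1)/2$. By Proposition \ref{prop:basis}, $r_1,\dots,r_n$ is a basis of $\mathbb{R}^n$, so a linear functional $\psi_\sigma$ is determined by its values on them. Define
\[
\psi_\sigma(r_{\sigma(j)}) = 0 \ (1\le j\le k),\qquad \psi_\sigma(r_{\sigma(j)}) = 2 \ (k+1\le j\le n-1),\qquad \psi_\sigma(r_{\sigma(n)}) = 1 .
\]
These values come from the ansatz $a$ on the second block and $b$ on $r_{\sigma(n)}$. Requiring $\psi_\sigma$ to vanish on the reflections $f_{\sigma(j)}$ with $j>k$ forces $(n-1)a+2b = na$, that is $b=a/2$.

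\textbf{Step 2: evaluate on the reflections.} As in Lemma \ref{lem:4.6}, use $(n+1)\mathbf{1} = \frac{2}{n}(r_1+\cdots+r_n)$, which follows from Proposition \ref{prop:centroid}. This gives
\[
\psi_\sigma\big((n+1)\mathbf{1}\big) = \frac{2}{n}\big(2k+1\big) = 2 .
\]
By \eqref{eq:ref}, $\psi_\sigma(f_i) = 2 - \psi_\sigma(r_i)$. Hence $\psi_\sigma(f_{\sigma(j)})$ equals $2$ for $j\le k$, equals $0$ for $k+1\le j\le n-1$, and equals $1$ for $j=n$.

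\textbf{Step 3: conclude.} Every vertex of $Q_n$ therefore satisfies $\psi_\sigma \ge 0$. Equality holds exactly on $r_{\sigma(1)},\dots,r_{\sigma(k)}$ and $f_{\sigma(k+1)},\dots,f_{\sigma(n-1)}$, which are precisely the vertices of $F_\sigma$. Since $\psi_\sigma$ is not constant on $Q_n$ (it takes the values $0,1,2$), the hyperplane $\{\psi_\sigma=0\}$ meets $\operatorname{aff}(Q_n)$ in a genuine supporting hyperplane, and $F_\sigma$ is a proper face. Its $n-1$ vertices contain no antipodal pair, because the indices $\sigma(1),\dots,\sigma(n-1)$ are distinct. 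By Lemma \ref{lem:4.4} they are affinely independent, so $\dim F_\sigma = n-2 = \dim Q_n - 1$, and $F_\sigma$ is a facet.

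The only real obstacle is finding the functional. In the even case the split into equal halves works with values $0$ and $1$, but here the leftover index $\sigma(n)$ has to receive a carefully chosen intermediate value. Solving the one linear constraint in Step 1 gives it directly, and the remaining steps are routine verification.
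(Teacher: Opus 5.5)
Your proposal is correct and follows the paper's proof: your functional is exactly twice the paper's $\psi_\sigma$ (values $0,2,1$ instead of $0,1,\tfrac12$). The remaining steps match the paper as well: evaluating on reflections via $(n+1)\mathbf{1}=\tfrac{2}{n}(r_1+\cdots+r_n)$, checking nonnegativity on all vertices, and using the affine independence criterion for odd $n$ to get dimension $n-2$.
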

\begin{proof}
 Since the rotation vectors form a basis of $\mathbb{R}^n$, we can choose a unique linear functional $\psi_\sigma$ that vanishes when evaluated at the first $(n-1)/2$ rotation vectors, is equal to $1$ when evaluated at the next $(n-1)/2$ rotation vectors, and is equal to $1/2$ when evaluated at $r_{\sigma(n)}$. Then, by \eqref{eq:ref},\[
    \psi_\sigma(f_{\sigma(j)}) = \psi_\sigma((n+1)(\mathbf{1}) - r_{\sigma(j)}).
    \] Note that $(n+1)(\mathbf{1}) = 2((n+1)/2)(\mathbf{1}) = 2(r_1 + \cdots + r_{n})/n$. Thus, $\psi_\sigma((n+1)(\mathbf{1})) = 2/n((n-1)/2 + 1/2) = 2/n(n/2) = 1$. This means $\psi_\sigma(f_{\sigma(n)}) = 1/2$, $\psi_\sigma(f_{\sigma(j)}) = 0$ for $n > j > (n-1)/2$ and $\psi_\sigma(f_{\sigma(j)}) = 1$ for $j \leq (n-1)/2$. Indeed, this linear functional vanishes on exactly the $n-1$ desired vertices. Furthermore, the linear inequality $\psi_\sigma(v) \geq 0$ is true for all vertices of the dihedral permutohedron, since it was constructed to be true for the rotations and is proven to be true for the reflections by the computation above. This inequality is hence a valid inequality of the dihedral permutohedron and these vertices form a face of the dihedral permutohedron. By lemma \ref{lem:4.4}, any set of vertices with no antipodal pairs is affinely independent for odd $n$, so these vertices have an affine hull that is a simplex of dimension $n-2$. Indeed, these vertices form a facet. \end{proof}

We have now shown that certain subsets of the vertices form facets for the odd and even cases. We will now show that all the facets of the dihedral permutohedron are of the forms above using a centroid argument, as has become common in this paper.

\begin{lemma}
\label{lem:4.8}
    For odd $n$, a facet must have $n-1$ vertices with $(n-1)/2$ rotation vertices, $(n-1)/2$ reflection vertices \eqref{eq:ref}, and no antipodal pairs, exactly the facets described in the previous lemma.
\end{lemma}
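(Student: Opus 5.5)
We prove that every facet of $Q_n$ (odd $n$) has the stated form by combining the simpliciality of $Q_n$ with the supporting-functional argument of Lemma \ref{lem:4.7}.

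\emph{Vertex count and antipodal pairs.} Let $F$ be a facet of $Q_n$ for odd $n$. By the corollary that the odd dihedral permutohedron is simplicial, $F$ is an $(n-2)$-simplex, so it has exactly $n-1$ vertices. By the centroid argument used in that corollary, $F$ contains no antipodal pair $\{r_i,f_i\}$, since otherwise $F$ would contain the vertex centroid $\frac{n+1}{2}\mathbf{1}$, which lies in the interior. Hence the $n-1$ vertices of $F$ use $n-1$ distinct indices from $\{1,\dots,n\}$. Exactly one index $k$ is missing: neither $r_k$ nor $f_k$ lies in $F$.

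\emph{Supporting functional.} Choose a linear functional $\psi$ on $\mathbb{R}^n$ that vanishes on $F$ and is positive on every vertex outside $F$. Such a $\psi$ exists: take a supporting affine functional, which can be made linear because $Q_n$ lies in the affine hyperplane $\sum x_i = n(n+1)/2$ not through the origin. Write $a_i := \psi(r_i)$. By \eqref{eq:ref} and Proposition \ref{prop:centroid},
\[
\psi(f_i) = \frac{2}{n}\sum_{l=1}^n a_l - a_i =: 2\mu - a_i, \qquad \mu := \frac{1}{n}\sum_{l} a_l .
\]
Every vertex satisfies $\psi \ge 0$. For each index $i \neq k$, exactly one of $r_i, f_i$ lies in $F$. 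So either $a_i = 0$ and $\psi(f_i) = 2\mu > 0$, or $a_i = 2\mu$ and $\psi(r_i) = 2\mu > 0$. For the missing index, $a_k > 0$ and $2\mu - a_k > 0$, so $0 < a_k < 2\mu$.

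\emph{Counting.} Let $p$ be the number of rotations in $F$ and $q = n-1-p$ the number of reflections. Then
\[
n\mu = \sum_l a_l = q\cdot 2\mu + a_k .
\]
Hence $a_k = (n-2q)\mu$, and the condition $0 < a_k < 2\mu$ with $\mu > 0$ gives $0 < n - 2q < 2$. Since $n$ is odd, $n - 2q = 1$, so $q = (n-1)/2$ and $p = (n-1)/2$. Thus $F$ is exactly one of the sets $F_\sigma$ of \eqref{eq:facets}, where $\sigma(n) = k$.

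\emph{Main obstacle.} The delicate step is justifying the supporting linear functional: its positivity off $F$ and the fact that it can be taken linear rather than affine. This follows because the affine hyperplane containing $Q_n$ misses the origin, so any affine function on that hyperplane extends to a linear functional. Everything else is bookkeeping with \eqref{eq:ref}, just as in the proof of Lemma \ref{lem:4.7}.
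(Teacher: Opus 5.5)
Your proof is correct, but it argues on the dual side, whereas the paper argues on the primal side.

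\textbf{How the paper argues.} After the shared first step (simpliciality gives $n-1$ vertices, and the centroid rules out antipodal pairs), the paper uses central symmetry to assume at least $(n-1)/2$ rotations. It then rewrites the barycenter of the candidate simplex, using $f_j=(n+1)\mathbf{1}-r_j$ and $(n+1)\mathbf{1}=\frac1n\sum_k(r_k+f_k)$, as a convex combination of all $2n$ vertices. The coefficients are strictly positive whenever there are more than $n/2$ rotations, so this barycenter is an interior point of $Q_n$ and the simplex cannot lie in a facet.

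\textbf{What your route buys.} You instead read off the facet-defining functional. The identity $\psi(f_i)=2\mu-\psi(r_i)$ forces $a_i\in\{0,2\mu\}$ on the used indices and $a_k=(n-2q)\mu\in(0,2\mu)$ on the missing one. This handles too many and too few rotations at once, with no symmetry reduction. It also shows more: since $a_k=\mu$, the supporting functional is, up to scaling, exactly the $\psi_\sigma$ of Lemma~\ref{lem:4.7}. The same computation settles Lemma~\ref{lem:4.9} immediately. For even $n$, an $(n-1)$-vertex facet would need $n-2q=1$, which is impossible, and an $n$-vertex facet gives $q=n/2$. It would even let you bypass simpliciality, since an $n$-vertex facet for odd $n$ would force $q=n/2\notin\mathbb{Z}$.

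\textbf{What the paper's route buys.} It is more concrete: it exhibits an explicit interior point inside each excluded simplex.

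\textbf{On your ``main obstacle.''} The step you flag is not really an obstacle. The relation $f_i=\frac2n\sum_l r_l-r_i$ is an affine combination, since its coefficients sum to $1$. Hence $\psi(f_i)=2\mu-a_i$ already holds for the affine supporting functional. Passing to a linear one is correctly justified but unnecessary.
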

\begin{proof}
    Consider some facet of the odd dihedral permutohedron. Since the odd dihedral permutohedron is simplicial, the facet must contain $(n-1)$ vertices. Furthermore the facet can contain no antipodal pairs, as the vertex centroid, an interior point, is a convex combination of any two antipodal points. Thus, this facet must be a simplex of dimension $n-2$ defined by some vertices $r_{\sigma(1)}, r_{\sigma(2)}, \cdots, r_{\sigma(i)}, f_{\sigma(i+1)}, \cdots, f_{\sigma(n-1)}$. Without loss of generality, we can assume half or more of these vertices are rotations, as if this is not the case, we can apply the central symmetry map of the dihedral permutohedron to get a facet composed of half rotations or more. In other words, $i \geq (n-1)/2$. Consider the centroid of this facet\[
    (r_{\sigma(1)} + r_{\sigma(2)} + \cdots +r_{\sigma(i)} + \cdots + f_{\sigma(n-1)})/(n-1).
    \] This expression is equal to $(1/(n-1))[(r_{\sigma(1)} + \cdots +  r_{\sigma(n)}) - r_{\sigma(i+1)} - \cdots - r_{\sigma(n)} + \cdots + f_{\sigma(n-1)}] = (1/(n-1))[(n(n+1)/2)(\mathbf{1}) - r_{\sigma(i+1)} - \cdots - r_{\sigma(n)} + \cdots +f_{\sigma(n-1)}]$. Distributing the factors of the constant vector yields this expression being equal to $(1/(n-1))[((n+1)(\mathbf{1}) - r_{\sigma(i+1)}) + \cdots + ((n + 1)(\mathbf{1}) - r_{\sigma(n)}) + \cdots +f_{\sigma(n-1)} + ((n(n+1)/2) - (n+1)(n-i))(\mathbf{1})]$. Using \eqref{eq:ref}, we have the centroid being equal to \[
    (1/(n-1))[2(f_{\sigma(i+1)} + \cdots f_{\sigma(n-1)}) + f_{\sigma(n)} + (n/2 - (n-i))(n+1)(\mathbf{1})].
    \] This expression is equal to $(1/(n-1))[2(f_{\sigma(i+1)} + \cdots f_{\sigma(n-1)}) + f_{\sigma(n)} + (1/n)(i-n/2)\sum_{k=1}^n(r_{\sigma(k)} + f_{\sigma(k)})]$. We want to check if this is a convex combination. The first $n-i$ coefficients inside the brackets sum to $2(n-i) - 1 = 2n - 2i - 1$. The second set of coefficients sum to $2(i-n/2) = 2i - n$. Adding the two together yields $n - 1$, which after dividing by all $n-1$ yields a linear combination with coefficients that sum to $1$. The last coefficient, $(1/n)(i - n/2)$ is positive for integer $i > (n-1)/2$. Thus, for $i > (n-1)/2$, the centroid can be expressed as a positive convex combination of all the vertices of the dihedral permutohedron, and hence lies in the relative interior of the dihedral permutohedron. Therefore, these vertices cannot form a facet unless $i = (n-1)/2$ as facets need to be contained in the boundary of polytopes. Indeed, the lemma is proven. \end{proof}

\begin{cor}
    The facets of the odd dihedral permutohedron are formed exactly by choosing $(n-1)/2$ rotation vertices and $(n-1)/2$ reflection vertices to be included in the facet, with no antipodal pairs. This yields $n\binom{n-1}{(n-1)/2}$ facets for the odd dihedral permutohedron.
\end{cor}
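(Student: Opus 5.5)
The corollary follows by combining Lemma \ref{lem:4.7} and Lemma \ref{lem:4.8} and then counting. Lemma \ref{lem:4.8} shows that every facet of the odd dihedral permutohedron has the following form: it is the convex hull of $(n-1)/2$ rotation vertices and $(n-1)/2$ reflection vertices, with no antipodal pair. Conversely, suppose we are given such a vertex set, say rotations $r_{a_1},\dots,r_{a_k}$ and reflections $f_{b_1},\dots,f_{b_k}$ with $k=(n-1)/2$ and all $2k=n-1$ indices distinct. Then exactly one index $c\in\{1,\dots,n\}$ is unused. Choose $\sigma\in S_n$ with $\sigma(j)=a_j$ for $j\le k$, $\sigma(k+j)=b_j$ for $j\le k$, and $\sigma(n)=c$. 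The set is then precisely $F_\sigma$ from \eqref{eq:facets}, so Lemma \ref{lem:4.7} shows it is a facet. This gives the characterization.

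For the count, we need each facet to correspond to exactly one such choice of vertex set, so that we are counting subsets rather than permutations. A facet is the convex hull of its vertex set. Moreover, by the simpliciality corollary and Lemma \ref{lem:4.4}, the $n-1$ chosen points are affinely independent, and each is a vertex of $Q_n$. Hence they are exactly the vertices of the facet, and distinct choices give distinct facets.

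We then count choices in three steps:
\begin{itemize}
\item choose the unused index $c$, in $n$ ways;
\item among the remaining $n-1$ indices, choose which $(n-1)/2$ carry rotations, in $\binom{n-1}{(n-1)/2}$ ways;
\item the other $(n-1)/2$ indices then carry reflections.
\end{itemize}
Each facet arises exactly once in this way, because the index set $\{a_j\}\cup\{b_j\}$ and the split into rotation indices and reflection indices are read off from the vertex set. This gives $n\binom{n-1}{(n-1)/2}$ facets.

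The only real subtlety is this bijection between facets and the combinatorial data, as opposed to permutations $\sigma$, which overcount by $((n-1)/2)!^2$. It is handled by the affine independence established above. As a check, for $n=3$ the formula gives $3\binom{2}{1}=6$, which matches the hexagon.
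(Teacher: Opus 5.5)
Your proposal is correct and follows essentially the same route as the paper. Lemma~\ref{lem:4.7} shows every such vertex set spans a facet, Lemma~\ref{lem:4.8} shows every facet has this form, and the count $n\binom{n-1}{(n-1)/2}$ comes from choosing the excluded antipodal pair and then the rotation indices. Your extra check that distinct vertex sets give distinct facets is a detail the paper leaves implicit, and your argument for it is sound.
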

\begin{proof}
    By lemma \ref{lem:4.7} we have that all simplices formed by taking $(n-1)/2$ rotation vertices and $(n-1)/2$ reflection vertices form facets. From lemma \ref{lem:4.8}, we have that these are the only simplices that could be facets. To construct such a facet, you have $n$ choices of which antipodal pair to exclude, and then there are $\binom{n-1}{(n-1)/2}$ choices for the rotations after making this choice. This yields $n \binom{n-1}{(n-1)/2}$ total facets.
\end{proof}

\begin{lemma}
\label{lem:4.9}
    For even $n > 3$, a facet must have $n$ vertices with an equal number of rotation and reflection vertices, and no antipodal pairs. These are exactly the facets described in lemma \ref{lem:4.6}.
\end{lemma}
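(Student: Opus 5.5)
Let $F$ be a facet of $Q_n$ for even $n>3$, and let $V$ be its vertex set. The plan is to pin down $V$ in three steps: first that it has no antipodal pairs and exactly $n$ elements, then that it splits evenly between rotations and reflections, and finally that it matches the list in Lemma~\ref{lem:4.6}.

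\textbf{No antipodal pairs, and $|V| = n$.} If $V$ contained some pair $r_i, f_i$, then $F$ would contain $(r_i+f_i)/2 = \frac{n+1}{2}\mathbf{1}$. This is the vertex centroid, which is an interior point, and a proper face cannot contain it. So $V$ has no antipodal pair, and hence $|V| \le n$. Since $F$ has dimension $n-2$, its affine hull needs at least $n-1$ affinely independent vertices, so $|V| \ge n-1$.

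Suppose $|V| = n-1$. Then $F$ is a simplex, and we may write $V = \{r_{\sigma(1)},\dots,r_{\sigma(i)}, f_{\sigma(i+1)},\dots,f_{\sigma(n-1)}\}$. The central symmetry $v \mapsto (n+1)\mathbf{1} - v$ sends facets to facets and swaps rotations with reflections, so we may assume $i \ge (n-1)/2$; since $n$ is even, this means $i \ge n/2$. The centroid calculation from Lemma~\ref{lem:4.8} is purely algebraic and uses only \eqref{eq:ref} and Proposition~\ref{prop:centroid}, so it carries over verbatim. It writes the centroid of $F$ as
\[\frac{1}{n-1}\Big[2\sum_{k=i+1}^{n-1} f_{\sigma(k)} + f_{\sigma(n)} + \tfrac{1}{n}\big(i-\tfrac{n}{2}\big)\sum_{k=1}^{n}\big(r_{\sigma(k)}+f_{\sigma(k)}\big)\Big].\]
When $i > n/2$, every vertex of $Q_n$ gets a strictly positive coefficient. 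So the centroid is an interior point of $Q_n$, which is impossible for a point of a facet.

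The remaining case $i = n/2$ is the main obstacle, because there the interior-point argument fails. In that case the $n-1$ vertices miss exactly one antipodal pair, $\{r_{\sigma(n)}, f_{\sigma(n)}\}$. I would show that $F$ must then contain one of these two vertices as well. Any supporting functional $\varphi$ of $F$ can be normalized so that $\varphi(\frac{n+1}{2}\mathbf{1}) = 1/2$, using the same computation as in Lemma~\ref{lem:4.6}. Then $\varphi(f_j) = 1 - \varphi(r_j)$ for every $j$, and $\varphi \ge 0$ on all vertices. Let $a = \varphi(r_{\sigma(n)})$. We know
\begin{itemize}
\item $\varphi(r_{\sigma(k)}) = 0$ for $k \le n/2$,
\item $\varphi(r_{\sigma(k)}) = 1$ for $n/2 < k \le n-1$,
\item $\varphi(r_{\sigma(n)}) = a$.
\end{itemize}
Summing over all rotations and using $\sum_k r_k = n\cdot\frac{n+1}{2}\mathbf{1}$ gives
\[(n/2 - 1) + a = n\cdot\tfrac12 = \tfrac{n}{2}.\]
So $a = 1$, which gives $\varphi(f_{\sigma(n)}) = 0$. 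Thus $f_{\sigma(n)} \in F$, contradicting $|V| = n-1$. Hence $|V| = n$.

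\textbf{Equal numbers of rotations and reflections.} Now $V$ consists of $n$ vertices with no antipodal pair, so it contains exactly one of $r_j, f_j$ for each $j$. If the rotations and reflections in $V$ were unequal in number, Lemma~\ref{lem:4.5} would make $V$ affinely independent. Then $n$ affinely independent points would span an $(n-1)$-dimensional affine hull inside the $(n-2)$-dimensional facet, which is impossible. So $V$ has $n/2$ rotations and $n/2$ reflections.

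\textbf{Matching Lemma~\ref{lem:4.6}.} Choosing $\sigma$ to list the rotation indices first, $V$ is exactly the vertex list appearing in Lemma~\ref{lem:4.6}. That lemma shows every such set is a facet, so these are precisely the facets.
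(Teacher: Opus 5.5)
Your proof is correct and follows the paper's outline. You exclude antipodal pairs via the centroid, rule out the simplex case with $i>n/2$ by reusing the centroid computation of Lemma~\ref{lem:4.8}, and use Lemma~\ref{lem:4.5} to force an even split when $|V|=n$. The one variation is the borderline case $i=n/2$: the paper argues that such a simplex lies properly inside a same-dimensional facet from Lemma~\ref{lem:4.6}, while you show directly that any supporting functional vanishing on those $n-1$ vertices must also vanish on $f_{\sigma(n)}$. That is the Lemma~\ref{lem:4.6} computation run in reverse, and it is equally valid.
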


\begin{proof}
    Since a facet cannot contain an antipodal pair, for even $n > 3$, from our lemma on affine independence, we have that a facet of the dihedral permutohedron must either be a simplex with $(n-1)$ vertices, or be formed by $n$ vertices with an equal number of rotations and reflections. For the simplex case, we can employ the same centroid argument as for the odd case above, but with $(n/2)$ rotations or more, which corresponds to $i \geq n/2$. Looking at the equations in the previous proof, this eliminates the possibility of a facet of this form having more than $n/2$ rotations or less than $n/2 - 1$ rotations.

    Thus, a facet could have $n$ vertices with $n/2$ rotations and $n/2$ reflections, could be a simplex with $n/2$ rotations and $n/2 - 1$ reflections, or could be a simplex with $n/2 - 1$ rotations and $n/2$ reflections.  By lemma \ref{lem:4.6} any simplex with $n/2$ rotations and $n/2 - 1$ reflections is properly contained in a facet with $n$ vertices and has the same dimension, and is hence not a facet. The same holds for simplices with $n/2 - 1$ rotations and $n/2$ reflections. Hence, a facet must have $n$ vertices with equal rotations and reflections. \end{proof}

\begin{cor}
    The facets of the even dihedral permutohedron are formed exactly by choosing $n/2$ rotations and $n/2$ reflections, with no antipodal pairs. This yields exactly $\binom{n}{n/2}$ facets.
\end{cor}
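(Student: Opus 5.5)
The corollary has two parts: a characterization of the facets and a count. The characterization I will take directly from the two preceding lemmas. Lemma \ref{lem:4.6} says that for every $\sigma \in S_n$ the set
\[
\conv(r_{\sigma(1)},\dots,r_{\sigma(n/2)},f_{\sigma(n/2+1)},\dots,f_{\sigma(n)})
\]
is a facet. Lemma \ref{lem:4.9} says that every facet has exactly this form: $n$ vertices, $n/2$ rotations and $n/2$ reflections, and no antipodal pairs. Together these give the "exactly" part, so the only remaining work is the count.

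\textbf{Choosing the rotations.} A vertex set of the required form has no antipodal pairs and contains $n$ vertices, one chosen from each of the $n$ antipodal pairs $\{r_i,f_i\}$. Since it has $n/2$ rotations, it is determined by choosing a subset $S \subseteq \{1,\dots,n\}$ with $|S| = n/2$. The rotations in the facet are then $r_i$ for $i \in S$, and the reflections are $f_j$ for $j \notin S$. Conversely, every such $S$ gives an admissible vertex set, because we can pick $\sigma$ listing $S$ first and invoke Lemma \ref{lem:4.6}. This yields $\binom{n}{n/2}$ vertex sets.

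\textbf{Distinct sets give distinct facets.} It remains to check that different subsets $S$ produce different facets, so that nothing is double counted. This holds because the vertices of a face of a polytope are exactly the polytope's vertices lying in it. The vertex set of $\conv$ of these $n$ vertices is therefore the chosen set itself, with no other vertex of $Q_n$ on the face. So distinct $S$ give distinct facets.

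To justify this concretely, I will use the functional $\varphi_\sigma$ from the proof of Lemma \ref{lem:4.6}. It takes the value $0$ on precisely the chosen $n$ vertices and the value $1$ on all the others. Hence no other vertex of $Q_n$ lies on the facet, and the count is $\binom{n}{n/2}$.

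The main subtlety I anticipate is exactly this last point, and the computation with $\varphi_\sigma$ already carried out in Lemma \ref{lem:4.6} handles it directly.
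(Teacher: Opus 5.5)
Your proposal is correct and follows the paper's proof. Lemma \ref{lem:4.6} and Lemma \ref{lem:4.9} give the characterization, and the count $\binom{n}{n/2}$ comes from choosing which $n/2$ antipodal pairs contribute their rotation. Your extra check via $\varphi_\sigma$, which equals $0$ exactly on the chosen $n$ vertices and $1$ on the rest, makes explicit the injectivity that the paper leaves implicit when it says a facet is determined by its rotation vertices.
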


\begin{proof}
    For even $n$, lemma \ref{lem:4.6} shows that any choice of such vertices forms a facet. Lemma \ref{lem:4.9} shows that any facet must be of this form. Since there are $n$ antipodal pairs and the facet is determined by choosing the $n/2$ rotation vertices, there are $\binom{n}{n/2}$ facets. 
\end{proof}

Let us now perform the pulling triangulation for odd $n \geq 3$ described by theorem \ref{thm:4.2}. We start with the trivial subdivision $\{Q_n\}$. After pulling at $r_1$, this becomes the subdivision $\{\conv(F \cup r_1) \mid r_1 \notin F\}$, where $F$ is a facet of the dihedral permutohedron. There are two cases for a facet not containing $r_1$. Before discussing these two cases, there is a useful normalized volume computation we can perform.

\begin{lemma}
\label{lem:4.10}
    For odd $n$, the normalized volume of the simplex with vertex set $\{r_i\}_{i=1}^n$ \eqref{eq:rotations} is $n^{n-2}$.
\end{lemma}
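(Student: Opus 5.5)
The plan is to reduce the normalized volume of $\conv(r_1,\dots,r_n)$ to the determinant of the circulant matrix $R$ in \eqref{eq:circ}, and then evaluate that determinant with the circulant determinant theorem cited above. Reordering the columns of $R$ only changes the sign of the determinant, so I may work with $\lvert\det R\rvert$ and ignore the column order.

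\textbf{Step 1: relate the normalized volume to $\det R$.} By the convention fixed before the statement, the normalized volume of the simplex is $\lvert\det M\rvert$. Here $M$ is the $(n-1)\times(n-1)$ matrix whose columns are the edge vectors $r_i - r_1$ ($2\le i\le n$), written in the simple root basis $e_j-e_{j+1}$ of the lattice $\{x\in\mathbb{Z}^n : \sum x_j = 0\}$.

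To bring in $R$, I change basis in $\mathbb{Z}^n$ to $e_1-e_2,\dots,e_{n-1}-e_n,e_n$. This basis is unimodular, because its matrix is triangular with $\pm1$ on the diagonal. In the new coordinates, the last coordinate of any vector $x$ equals $\sum_j x_j$.

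Next I replace the columns $r_1,\dots,r_n$ by $r_1, r_2-r_1,\dots,r_n-r_1$; these column operations do not change the determinant. In the new basis:
\begin{itemize}
\item each difference $r_i-r_1$ has last coordinate $0$;
\item $r_1$ has last coordinate $1+2+\cdots+n = n(n+1)/2$.
\end{itemize}
The matrix is therefore block triangular, and
\[
\lvert\det R\rvert = \frac{n(n+1)}{2}\,\lvert\det M\rvert .
\]

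\textbf{Step 2: compute $\det R$.} The associated polynomial is $f(x)=\sum_{k=1}^n k x^{k-1}$, and $f(1)=n(n+1)/2$. For an $n$th root of unity $\omega^k\neq 1$, differentiate $\sum_{k=0}^{n} x^k=(x^{n+1}-1)/(x-1)$, or use the standard identity $\sum_{k=1}^n k z^{k} = nz/(z-1)$ when $z^n=1$, $z\ne1$. Either way this gives
\[
f(\omega^k)=\frac{n}{\omega^k-1}.
\]
Hence
\[
\det R = \frac{n(n+1)}{2}\prod_{k=1}^{n-1}\frac{n}{\omega^k-1}.
\]
Using $\prod_{k=1}^{n-1}(1-\omega^k)=n$, which is the value at $x=1$ of $(x^n-1)/(x-1)$, the product has absolute value $n^{n-1}/n=n^{n-2}$. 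So
\[
\lvert\det R\rvert=\frac{n(n+1)}{2}\,n^{n-2}.
\]

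\textbf{Step 3: conclude.} Combining Steps 1 and 2 gives $\lvert\det M\rvert = n^{n-2}$, which is the claimed normalized volume. Parity of $n$ only affects the sign of $\det R$, so the same argument works for any $n$, odd or even.

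The main obstacle is Step 1. One has to check carefully that the lattice used in the normalized-volume convention (the root lattice, via the simple root embedding) matches the unimodular change of basis. Otherwise the factor $n(n+1)/2$ could be misidentified, for example by confusing the lattice in the hyperplane $\sum x_j=n(n+1)/2$ with $\mathbb{Z}^{n-1}$ under a non-unimodular projection. The evaluation of $f(\omega^k)$ in Step 2 is routine.
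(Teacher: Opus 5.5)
Your argument is correct, and it takes a genuinely different route from the paper's.

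The paper works directly in $\mathbb{R}^{n-1}$. It writes down the simple-root coordinates $\bar r_i$ of $r_i-r_1$ explicitly, as partial sums of $(i-1,\dots,i-1,i-n-1,\dots,i-n-1)$. It then column-reduces the resulting $(n-1)\times(n-1)$ matrix to a triangular one with diagonal $1,-n,\dots,-n$.

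You instead lift back to $\mathbb{R}^n$. The unimodular basis $e_1-e_2,\dots,e_{n-1}-e_n,e_n$ turns the coordinate sum into the last coordinate, which gives $\lvert\det R\rvert=\frac{n(n+1)}{2}\lvert\det M\rvert$. You then evaluate $\det R$ exactly with the circulant formula, using $f(\omega^k)=n/(\omega^k-1)$ and $\prod_{k=1}^{n-1}(1-\omega^k)=n$.

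Your route has several advantages:
\begin{itemize}
\item It reuses the circulant computation that the paper used in Proposition~\ref{prop:basis} only to show $\det R\neq 0$.
\item It avoids the index bookkeeping behind the paper's partly schematic matrices.
\item It makes it evident that parity plays no role. This matters: the even case (Lemma~\ref{lem:4.16}, through the method of Lemma~\ref{lem:4.12}) uses this volume for even $n$, although the lemma is stated only for odd $n$.
\item It generalizes verbatim. For any integral basepoint $\rho$ with $\rho_1+\cdots+\rho_n\neq 0$, the normalized volume of the simplex spanned by the cyclic shifts of $\rho$ is $\prod_{k=1}^{n-1}\lvert \rho_1+\rho_2\omega^k+\cdots+\rho_n\omega^{k(n-1)}\rvert$. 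This speaks directly to the paper's question about other basepoints.
\end{itemize}

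The paper's computation, in exchange, is fully elementary (no roots of unity) and yields explicit lattice coordinates of the rotation vertices.
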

\begin{proof}
    To do this computation, we can embed $Q_n$ in $(n-1)$-dimensional space as a lattice polytope to compute the normalized volume as the determinant of an $(n-1) \times (n-1)$ matrix. We will proceed in the same way as before. First, translate all the vertices in this list by $-r_1$. We now have the vertices $\Vec{0}, r_2 - r_1, \cdots, \text{and} \ r_n - r_1$. We want to compute the normalized volume after embedding the simplex these vertices form into an $(n-1)$-dimensional space. Since the vertices of the dihedral permutohedron live in the hyperplane $\sum x_i = n(n+1)/2$, after this translation, the vertices live in the hyperplane $\sum x_i = 0$. We now have a natural way of embedding into $(n-1)$-dimensional space by expressing all these translated vertices in terms of the simple root basis of $S_n$. Let us see what the images of these translated vertices are after embedding them in $(n-1)$-dimensional space. We have \[r_i - r_1 = (1 + (i-1), \cdots, n + (i-1)) \ \text{mod } n - (1, \cdots, n). \] This expression is equal to $(i, i+1, \cdots, n, 1, \cdots, i-1) - (1,\cdots , n) = (i-1, i-1, \cdots, n-(n-i + 1), 1 - (n-i+2), \cdots, (i-1) - n) = (i-1, i-1, \cdots, i-1, i-n-1, i-n-1, \cdots, i-n-1)$. Let $\Bar{r}_i$ be our embedding of the vertex $r_i - r_1$ in $(n-1)$-dimensional space. Since embedding a vector in the simple root basis corresponds to taking partial sums of a vector, we have \[\Bar{r}_i = ((i-1), 2(i-1), \cdots, (n-i+1)(i-1), (n-i+2)(i-1) - n, \cdots, n -(i-1)) \] for $i > 1$, with $\Bar{r_1} = 0$. Now that we have 
    have embedded the dihedral permutohedron in $\mathbb{R}^{n-1}$ as a lattice polytope, and know the images of the rotation vertices under this embedding, the normalized volume of the simplex $\{r_i\}_{i=1}^n$ is equal to the absolute value of the determinant of the $(n-1) \times (n-1)$ matrix with columns $\{\Bar{r}_i\}_{i=2}^{n}$. This matrix is equal to \[\begin{pmatrix}
    1     & 2      & 3      & \cdots & n-2    & n-1      \\
    2      & 4      & 6      & \cdots & 2(n-2)  & 2(n-1)-n      \\
    3      & 6      & 9      & \cdots & 3(n-2) -n  & 3(n-1)-2n      \\
    \vdots & \vdots & \vdots & \ddots & \vdots & \vdots \\
    n-2    & 2(n-2)    & 3(n-2) - n & \cdots & 4    & 2    \\
    n-1      & 2(n-1) - n    & 3(n-1)-2n     & \cdots &  2   & 1
  \end{pmatrix}.\] Denote the columns of this matrix by $c_1, \cdots, c_{n-1}$. Subtract $2c_1$ from $c_2$, subtract $3c_1$ from $c_3$, and so on. After these operations we have the matrix:

\[\begin{pmatrix}
    1  & 0      & 0      & \cdots & 0    & 0     \\
    2  & 0      & 0      & \cdots & 0  & -n      \\
    3  & 0      & 0      & \cdots & -n  & \vdots      \\
    \vdots & \vdots & \vdots & \ddots & \vdots & \vdots \\
    n-2    & 0    & -n & \cdots & \cdots   & \cdots   \\
    n-1      & -n    & \cdots      & \cdots &  \cdots   & \cdots
\end{pmatrix}.\] Swapping columns, we can get the lower-triangular matrix
\[\begin{pmatrix}
    1  & 0      & 0      & \cdots & 0    & 0     \\
    2  & -n      & 0      & \cdots & 0  &  0    \\
    3  & \vdots      & -n     & \cdots & 0  & 0     \\
    \vdots & \vdots & \vdots & \ddots & \vdots & \vdots \\
    n-2    & \cdots     & \cdots & \cdots & -n   & 0 \\
    n-1      &  \cdots    & \cdots      & \cdots &  \cdots   & -n
  \end{pmatrix}.\] This matrix has the absolute value of its determinant equal to $n^{n-2}$. Since none of the column operations performed change the absolute value of the determinant, the normalized volume of our desired simplex is indeed $n^{n-2}$. \end{proof}

We can now discuss the two cases of facets not containing $r_1$ and compute the volumes of the corresponding simplices after adjoining $r_1$.

\begin{lemma}
\label{lem:4.11}
   For odd $n$, if a facet of the dihedral permutohedron doesn't contain $r_1$ or $f_1$, then after adjoining $r_1$, the vertices form a simplex of dimension $n-1$ with normalized volume $n^{n-3}$.
\end{lemma}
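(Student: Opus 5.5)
The plan is to avoid computing a new $(n-1)\times(n-1)$ determinant directly. Instead I will compare the new simplex with the simplex spanned by all rotation vertices, whose normalized volume is $n^{n-2}$ by Lemma \ref{lem:4.10}. The comparison is made by changing coordinates in the basis $r_1,\dots,r_n$ of Proposition \ref{prop:basis}.

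First I identify the simplex. By the corollary to Lemma \ref{lem:4.8}, a facet avoiding both $r_1$ and $f_1$ has the form
\[\{r_a : a\in A\}\cup\{f_b : b\in B\},\]
where $A\sqcup B=\{2,\dots,n\}$ and $|A|=|B|=(n-1)/2$. The excluded antipodal pair is necessarily the one indexed by $1$. Adjoining $r_1$ gives $n$ vertices with exactly one antipodal-free structure, namely no antipodal pair at all. By Lemma \ref{lem:4.4} these vertices are affinely independent, so they span a simplex of dimension $n-1$.

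Second, I reduce normalized volume to an $n\times n$ determinant. Let $v_1,\dots,v_n$ lie in the hyperplane $\sum x_i = N:=n(n+1)/2$, and let $V$ be the matrix with these columns. Replace column $v_k$ by $v_k-v_1$ for $k\ge 2$, and then add all rows to the first row. The first row becomes $(N,0,\dots,0)$, so $|\det V| = N\cdot|\det V'|$. Here $V'$ is the $(n-1)\times(n-1)$ matrix of the differences in the coordinates $x_2,\dots,x_n$. This coordinate system is related to the simple-root embedding used in the paper by a fixed unimodular change of coordinates, so $|\det V'|$ is exactly the normalized volume. Hence, for any two simplices with vertices among the $r_i,f_i$, the ratio of their normalized volumes equals the ratio of the absolute values of their $n\times n$ determinants.

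Third, I compute that ratio. Set $m=\frac1n\sum_k r_k$. Equation \eqref{eq:ref} together with Proposition \ref{prop:centroid} gives $f_b = \frac{2}{n}\sum_k r_k - r_b$. Write the columns $r_1, r_a\ (a\in A), f_b\ (b\in B)$ in the basis $r_1,\dots,r_n$, ordered so that column $j$ corresponds to index $j$. The coordinate matrix is then
\[M = D + \tfrac{2}{n}\mathbf{1}u^{T}.\]
Here $D$ is diagonal with entry $+1$ at indices in $\{1\}\cup A$ and $-1$ at indices in $B$, and $u$ is the indicator vector of $B$. The matrix determinant lemma gives
\[\det M=\det D\,\bigl(1+\tfrac2n u^TD^{-1}\mathbf{1}\bigr)=\pm\bigl(1-\tfrac2n\cdot\tfrac{n-1}{2}\bigr)=\pm\tfrac1n.\]
Therefore the determinant of the new simplex is $\frac1n$ times the determinant of the all-rotation simplex. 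By Lemma \ref{lem:4.10} the normalized volume is $n^{n-2}/n=n^{n-3}$.

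The main obstacle is the justification in the second step: one must check carefully that the normalized volume in the paper's simple-root embedding is proportional to the $n\times n$ determinant with the same constant for every simplex. Once that proportionality is established, the rank-one determinant computation is routine.
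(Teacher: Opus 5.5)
Your proof is correct. It shares the paper's strategy but uses a different technique. Both arguments compare the new simplex with the all-rotation simplex of Lemma~\ref{lem:4.10}, and both use $f_b = 2m - r_b$ with $m = \frac1n\sum_k r_k$.

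The paper stays in the $(n-1)$-dimensional simple-root coordinates and applies a tailored sequence of column operations to the wedge of difference vectors. It first subtracts the last reflection factor from the other reflection factors, then subtracts $\tfrac2n$ times every other factor from the last one to eliminate $2m$. This arrives at $\tfrac1n$ times the determinant of Lemma~\ref{lem:4.10}.

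You instead homogenize. Every vertex lies on $\sum x_i = N$ with $N = n(n+1)/2 \neq 0$, so normalized volume equals $|\det V|/N$. Your second step already proves this completely: the coordinates $x_2,\dots,x_n$ form a lattice basis of $\mathbb{Z}^n\cap\{\sum x_i=0\}$. So the obstacle you flag at the end is not a gap. In the rotation basis the problem becomes a $\pm 1$ diagonal matrix plus a rank-one term, which the matrix determinant lemma settles in one line.

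Your route buys uniformity. For any $n$ vertices with no antipodal pair and $i$ reflections, the same computation gives $|\det M| = |1-2i/n|$, i.e.\ normalized volume $|n-2i|\,n^{n-3}$. This also recovers the no-antipodal-pair cases of Lemmas~\ref{lem:4.4} and~\ref{lem:4.5}. Moreover, the circulant determinant of Section~3 gives $|\det R| = N n^{n-2}$, so your normalization yields Lemma~\ref{lem:4.10} itself without the explicit row reduction.

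The paper's route never leaves the lattice that defines normalized volume, so it needs no homogenization argument. Its cost is that the column operations must be redesigned for each vertex configuration, as happens again in Lemma~\ref{lem:4.12}.
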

\begin{proof}
    Such a facet has vertices $r_{\sigma(2)}, \ldots, r_{\sigma((n+1)/2)}, f_{\sigma((n+3)/2)}, \ldots, f_{\sigma(n)}$, where $\sigma(1) = 1$. Since the odd dihedral permutohedron is simplicial, and adjoining $r_1$ yields no antipodal pairs, our result is an $(n-1)$-dimensional simplex with vertices
    \[
    r_{\sigma(1)}, r_{\sigma(2)}, \ldots, r_{\sigma((n+1)/2)}, \; f_{\sigma((n+3)/2)}, \ldots, f_{\sigma(n)},
    \] where $\sigma(1) = 1$. For notational shorthand define the vertex centroid

    \begin{equation}\label{eq:centroid}
        m := \frac{r_1 + r_2 + \cdots + r_n}{n} = (\frac{n+1}{2})\,\mathbf{1}.
    \end{equation}
    
    By \eqref{eq:ref}, $f_i = (n+1)\mathbf{1} - r_i = 2m - r_i$. For vectors $\Vec{v}_1, \ldots, \Vec{v}_{n-1}$, write \[\bigwedge_{i=1}^{n-1}\Vec{v_i} = \Vec{v}_1 \wedge \cdots \wedge \Vec{v}_{n-1}\] for the determinant of the matrix with columns $\Vec{v}_1, \ldots, \Vec{v}_{n-1}$, where each vector is embedded into $\mathbb{R}^{n-1}$ via the simple root basis. Thus, the normalized volume of the simplex defined by these vertices is given by
    \[ D =
      \left|\;
      \bigwedge_{k=2}^{(n+1)/2} \bigl(\Vec{r}_{\sigma(k)} - \Vec{r}_{\sigma(1)}\bigr)
      \;\wedge\;
      \bigwedge_{j=(n+3)/2}^{n} \bigl(2\Vec{m} - \Vec{r}_{\sigma(j)} - \Vec{r}_{\sigma(1)}\bigr)
      \;\right|.
    \] Subtract the last factor from all the other factors containing a summand of $2\Vec{m}$. We get
    \[ D =
      \left|\;
      \bigwedge_{k=2}^{(n+1)/2} \bigl(\Vec{r}_{\sigma(k)} - \Vec{r}_{\sigma(1)}\bigr)
      \;\wedge\;
      \bigwedge_{j=(n+3)/2}^{n-1} \bigl(-\Vec{r}_{\sigma(j)} + \Vec{r}_{\sigma(n)}\bigr)
      \;\wedge\;
      \bigl(2\Vec{m} - \Vec{r}_{\sigma(n)} - \Vec{r}_{\sigma(1)}\bigr)
      \;\right|.
    \]
    Since changing signs doesn't affect absolute value, we can derive
    \[ D =
      \left|\;
      \bigwedge_{k=2}^{(n+1)/2} \bigl(\Vec{r}_{\sigma(k)} - \Vec{r}_{\sigma(1)}\bigr)
      \;\wedge\;
      \bigwedge_{j=(n+3)/2}^{n-1} \bigl(\Vec{r}_{\sigma(j)} - \Vec{r}_{\sigma(n)}\bigr)
      \;\wedge\;
      \bigl(2\Vec{m} - \Vec{r}_{\sigma(n)} - \Vec{r}_{\sigma(1)}\bigr)
      \;\right|,
    \]
    where we flipped the signs of the factors corresponding to reflection vertices. Now, we can try to eliminate the factor of $2\Vec{m}$ in the last factor. We can do this by subtracting $\tfrac{2}{n}$ times each of the other factors from the last factor. After these operations, we have the following expression as the last factor:
    \[
      \bigl(2\Vec{m} - \Vec{r}_{\sigma(n)} - \Vec{r}_{\sigma(1)}\bigr)
      \;+\; \frac{n-1}{n}\,\Vec{r}_{\sigma(1)}
      \;+\; \frac{n-3}{n}\,\Vec{r}_{\sigma(n)}
      \;-\; \frac{2}{n}\bigl(\Vec{r}_{\sigma(2)} + \cdots + \Vec{r}_{\sigma(n-1)}\bigr).
    \]
    This expression is equal to:
    \[
      \frac{2}{n}\bigl(\Vec{r}_{\sigma(1)} + \Vec{r}_{\sigma(n)}\bigr)
      \;-\; \Vec{r}_{\sigma(n)} - \Vec{r}_{\sigma(1)}
      \;+\; \frac{n-1}{n}\,\Vec{r}_{\sigma(1)}
      \;+\; \frac{n-3}{n}\,\Vec{r}_{\sigma(n)},
    \]
    which is equal to
    \[
      \frac{1}{n}\,\Vec{r}_{\sigma(1)} \;-\; \frac{1}{n}\,\Vec{r}_{\sigma(n)}.
    \] Thus, the determinant $D$ is equal to
    \[ D =
      \frac{1}{n}
      \left|\;
      \bigwedge_{k=2}^{(n+1)/2} \bigl(\Vec{r}_{\sigma(k)} - \Vec{r}_{\sigma(1)}\bigr)
      \;\wedge\;
      \bigwedge_{j=(n+3)/2}^{n-1} \bigl(\Vec{r}_{\sigma(j)} - \Vec{r}_{\sigma(n)}\bigr)
      \;\wedge\;
      \bigl(\Vec{r}_{\sigma(n)} - \Vec{r}_{\sigma(1)}\bigr)
      \;\right|.
    \]
    After adding the last factor to all factors that contain a summand of $-\Vec{r}_{\sigma(n)}$, we get
    \[
      \frac{1}{n}
      \left|\;
      \bigwedge_{k=2}^{n} \bigl(\Vec{r}_{\sigma(k)} - \Vec{r}_{\sigma(1)}\bigr)
      \;\right|
      \;=\; n^{n-3},
    \]
    by the previous lemma \ref{lem:4.10}. Indeed, the normalized volume is as described. \end{proof}
    
\begin{lemma}
\label{lem:4.12}
    For odd $n$, if a facet contains $f_1$, but not $r_1$, then the normalized volume of the $(n-1)$-dimensional simplex formed by adjoining $r_1$ is $2n^{n-3}$.
\end{lemma}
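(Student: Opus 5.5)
The plan is to run the same wedge-product computation as in Lemma \ref{lem:4.11}, reducing the determinant to the one already computed in Lemma \ref{lem:4.10}.

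\textbf{Setting up the simplex.} By the Corollary classifying facets of the odd dihedral permutohedron, such a facet has $(n-1)/2$ rotations and $(n-1)/2$ reflections with no antipodal pair. Hence exactly one index $c \neq 1$ is missing entirely, and $f_1$ is among the reflections. Set $k = (n-1)/2$. After adjoining $r_1$ the vertex list is
\[
r_1,\; r_{a_1}, \ldots, r_{a_k},\; f_1,\; f_{b_1}, \ldots, f_{b_{k-1}},
\]
where $\{1, a_1, \ldots, a_k, b_1, \ldots, b_{k-1}, c\} = \{1, \ldots, n\}$. This list contains exactly one antipodal pair and has $n$ vertices, so Lemma \ref{lem:4.4} shows it is affinely independent, giving an $(n-1)$-simplex. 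Its normalized volume is
\[
D = \left| \bigwedge_{i}(\Vec{r}_{a_i} - \Vec{r}_1) \wedge (\Vec{f}_1 - \Vec{r}_1) \wedge \bigwedge_{j}(\Vec{f}_{b_j} - \Vec{r}_1) \right|,
\]
with vectors embedded via the simple root basis as before.

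\textbf{Reducing the wedge.} Using $f_i = 2m - r_i$ from \eqref{eq:ref} and \eqref{eq:centroid}, we have $\Vec{f}_1 - \Vec{r}_1 = 2(\Vec{m} - \Vec{r}_1)$ and $\Vec{f}_{b_j} - \Vec{r}_1 = 2\Vec{m} - \Vec{r}_{b_j} - \Vec{r}_1$. The steps are as follows.

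\begin{enumerate}
\item Subtract the factor $\Vec{f}_1 - \Vec{r}_1$ from each $\Vec{f}_{b_j} - \Vec{r}_1$. This turns those factors into $\Vec{r}_1 - \Vec{r}_{b_j}$, and flipping signs does not affect the absolute value.
\item The wedge now consists of all the differences $\Vec{r}_j - \Vec{r}_1$ for $j \notin \{1, c\}$, together with the single factor $2(\Vec{m} - \Vec{r}_1)$.
\item Write $\Vec{m} - \Vec{r}_1 = \frac{1}{n}\sum_{j=2}^{n} (\Vec{r}_j - \Vec{r}_1)$.
\item Subtract $\frac{2}{n}$ times each of the other factors from the last factor. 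This leaves $\frac{2}{n}(\Vec{r}_c - \Vec{r}_1)$.
\end{enumerate}

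Therefore
\[
D = \frac{2}{n}\left|\bigwedge_{j=2}^{n}(\Vec{r}_j - \Vec{r}_1)\right| = \frac{2}{n}\, n^{n-2} = 2n^{n-3},
\]
by Lemma \ref{lem:4.10}.

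\textbf{Main obstacle.} The only delicate point is the bookkeeping in step 1. I must check that the rotation indices $a_i$, the converted reflection indices $b_j$, and the missing index $c$ together exhaust $\{2, \ldots, n\}$ exactly once. That is what makes the final wedge equal the full determinant of Lemma \ref{lem:4.10}, rather than a degenerate one. The count $k + (k-1) + 1 = n-1$ confirms the number of factors matches, and it is this coincidence that leaves exactly the single extra factor $2/n$ instead of the $1/n$ of Lemma \ref{lem:4.11}.
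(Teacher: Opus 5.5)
Your proposal is correct and matches the paper's proof step for step. You subtract the $f_1$-factor $2(\Vec{m}-\Vec{r}_1)$ from the other reflection factors to turn them into rotation differences, then clear $\Vec{m}$ from the last factor to leave $\frac{2}{n}(\Vec{r}_c-\Vec{r}_1)$, which gives $\frac{2}{n}n^{n-2}=2n^{n-3}$ by Lemma~\ref{lem:4.10}; the only cosmetic difference is that the paper factors out the $2$ first and then subtracts $\frac{1}{n}$ times the other factors.
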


\begin{proof}
    After adjoining $r_1$ to the given facet, by the affine dependence criteria, the result is an $(n-1)$-dimensional simplex equal to \[ \conv(F_\sigma \cup f_{\sigma(1)}),\]
    where $F_\sigma$ is defined in \eqref{eq:facets} and $\sigma(1) = 1$. Thus, using the determinant notation from lemma \ref{lem:4.11} and the relation $f_i = 2m - r_i$ from \eqref{eq:ref}, where $m$ \eqref{eq:centroid} is the vertex centroid, we are computing the absolute value of the determinant
    \[\Bigl|\;
      \bigwedge_{k=2}^{(n+1)/2} \bigl(\Vec{r}_{\sigma(k)} - \Vec{r}_{\sigma(1)}\bigr)
      \;\wedge\;
      \bigwedge_{j=(n+3)/2}^{n-1} \bigl(2\Vec{m} - \Vec{r}_{\sigma(j)} - \Vec{r}_{\sigma(1)}\bigr) \\\wedge\;
      \bigl(2\Vec{m} - 2\Vec{r}_{\sigma(1)}\bigr)
      \;\Bigr|\]
    Subtract the last factor from all the other factors containing a summand of $2\Vec{m}$. We get:
    \[\Bigl|\;
      \bigwedge_{k=2}^{(n+1)/2} \bigl(\Vec{r}_{\sigma(k)} - \Vec{r}_{\sigma(1)}\bigr)
      \;\wedge\;
      \bigwedge_{j=(n+3)/2}^{n-1} \bigl(-\Vec{r}_{\sigma(j)} + \Vec{r}_{\sigma(1)}\bigr) \\
      \wedge\;
      \bigl(2\Vec{m} - 2\Vec{r}_{\sigma(1)}\bigr)
      \;\Bigr|
    \]
    Multiplying the reflection factors by $-1$, except for the last factor, and removing a factor of $2$ from the last factor, we have this being equal to:
    \[
      2 \left|\;
      \bigwedge_{k=2}^{n-1} \bigl(\Vec{r}_{\sigma(k)} - \Vec{r}_{\sigma(1)}\bigr)
      \;\wedge\;
      \bigl(\Vec{m} - \Vec{r}_{\sigma(1)}\bigr)
      \;\right|
    \]
    We now check that subtracting $\frac1n$ times the first $n-2$ factors from the last factor yields
    \[
      \bigl(\Vec{m} - \Vec{r}_{\sigma(1)}\bigr) \;-\; \frac1n \sum_{k=2}^{n-1} \bigl(\Vec{r}_{\sigma(k)} - \Vec{r}_{\sigma(1)}\bigr)
      \;=\; \frac1n \Vec{r}_{\sigma(n)} \;-\; \frac1n \Vec{r}_{\sigma(1)}.
    \]
    Indeed, writing $\Vec{m} = \frac1n\sum_{k=1}^n \Vec{r}_k$ and tracking the coefficient of each $\Vec{r}_{\sigma(k)}$: the coefficient of $\Vec{r}_{\sigma(1)}$ is $\frac1n - 1 + \frac{n-2}{n} = -\frac1n$, since each of the $n-2$ other factors contributes $-1$ to it; the coefficient of $\Vec{r}_{\sigma(n)}$ is $\frac1n$, since $\Vec{r}_{\sigma(n)}$ appears nowhere among the other factors; and the coefficient of every remaining $\Vec{r}_{\sigma(k)}$, $2 \le k \le n-1$, is $\frac1n - \frac1n = 0$, since each appears with coefficient $+1$ in exactly one of the other factors. We now have the claimed value $\frac1n \Vec{r}_{\sigma(n)} - \frac1n \Vec{r}_{\sigma(1)}$.
    Thus, subtracting $\frac1n$ times the first $n-2$ factors from the last factor shows this determinant is equal to:
    \[
      2 \left|\;
      \bigwedge_{k=2}^{n-1} \bigl(\Vec{r}_{\sigma(k)} - \Vec{r}_{\sigma(1)}\bigr)
      \;\wedge\;
      \Bigl(\frac1n \Vec{r}_{\sigma(n)} - \frac1n \Vec{r}_{\sigma(1)}\Bigr)
      \;\right| \;=\; \frac{2}{n}\bigl(n^{n-2}\bigr) = 2n^{n-3}
    \] by the previous lemma \ref{lem:4.10}.
\end{proof}

With these volume computations, we now have all that we need to prove theorem \ref{thm:4.2}.

\begin{proof}[Proof of Theorem \ref{thm:4.2}]
    By definition, pulling at $r_1$ yields a subdivision of the dihedral permutohedron. From the affine independence criteria, all elements of this subdivision are simplices, so this is a triangulation. For the odd dihedral permutohedron, selecting a facet that doesn't contain $r_1$ or $f_1$ is done exactly by selecting $(n-1)/2$ rotations from the other $n-1$ antipodal pairs. Thus, by lemma \ref{lem:4.11}, we have $\binom{n-1}{(n-1)/2}$ simplices of normalized volume $n^{n-3}$ in the triangulation. For the case of a facet containing $f_1$ but not $r_1$, selecting such a facet first involves choosing one of the other $(n-1)$ antipodal pairs to exclude from the facet. This argument gives $(n-1)\binom{n-1}{(n-1)/2}$ facets that contain $f_1$ or $r_1$. Now, by the antipodal symmetry, half of these facets should contain $f_1$ and not $r_1$, so there are $((n-1)/2)\binom{n-1}{(n-1)/2}$ simplices in the triangulation of normalized volume $2n^{n-3}$ by lemma \ref{lem:4.12}. With this triangulation, we have a normalized volume of 
    \begin{multline}
        n^{n-3}\binom{n-1}{(n-1)/2} + 2n^{n-3}((n-1)/2)\binom{n-1}{(n-1)/2} \\
        = n^{n-3}\binom{n-1}{(n-1)/2} + n^{n-3}(n-1)\binom{n-1}{(n-1)/2} \\
        = n^{n-2}\binom{n-1}{(n-1)/2}
    \end{multline}
    for the odd dihedral permutohedron.
\end{proof}

\begin{remark}
    Certainly, given any vertex $v$ of the dihedral permutohedron there is a matrix \eqref{eq:dihedral_matrices} corresponding to some $w \in D_{2n}$ such that $w(r_1) = v$. Therefore, one could get an equivalent triangulation by pulling at any other vertex of the dihedral permutohedron.
\end{remark}

Let us now perform the pulling triangulation for even $n > 3$. Unlike the odd case, the dihedral permutohedron for even $n$ cannot be triangulated by pulling at a single vertex. We start with the trivial subdivision $\{Q_n\}$. Then, after pulling at $r_1$ \eqref{eq:rotations} this refines to the subdivision $\{\conv(F \cup r_1) \mid r_1 \notin F\}$, where $F$ is a facet of $Q_n$. Let us consider all such facets. The facets not containing $r_1$ are exactly the facets containing $f_1$ \eqref{eq:ref}, as our facets all need to contain exactly one vertex from each antipodal pair. Selecting $f_1$ as one of the reflections in a facet, there are still $n/2$ rotations to be selected from the remaining $n-1$ antipodal pairs to form a facet, yielding $\binom{n-1}{n/2} = \binom{n-1}{n/2 -1} = \binom{n-1}{\lfloor (n-1)/2 \rfloor}$ facets. Thus, our subdivision after pulling at $r_1$ is \begin{equation} \label{eq:pulling}
    \{\conv(F \cup r_1) \mid f_1 \in F\},
\end{equation} where $F$ is a facet of $Q_n$. Since every facet in this set contains $f_1$, refining again by pulling at $f_1$ yields the subdivision \[
\{\conv(F' \cup f_1) \mid f_1 \notin F'\},\] where $F'$ is a facet of some $\conv(F \cup r_1)$ from the previous step. Thus, to understand the pulling subdivision at this point, we need to understand these $F'$ facets. Note that by the affine independence criteria, a facet $F'$ of some $\conv(F \cup r_1)$ such that $f_1 \notin F'$ must be a simplex, as there is no longer the option of forming a facet with $\frac{n}{2}$ reflection vertices. To describe these facets, first note after pulling at $r_1$, every $\conv(F \cup r_1)$ is of the form
\begin{equation}\label{eq:new_facets}
\conv(F \cup r_1) = \conv(r_{\sigma(1)}, r_{\sigma(2)}, \cdots, r_{\sigma(n/2 + 1)}, \, f_{\sigma(n/2 + 2)}, \cdots, f_{\sigma(n)}, f_{\sigma(1)}),
\end{equation}
where $\sigma$ is some permutation such that $\sigma(1) = 1$.

\begin{lemma}
\label{lem:4.13}
    A facet $F'$ of some $\conv(F \cup r_1)$ \eqref{eq:new_facets} from the subdivision $\{\conv(F \cup r_1) \mid r_1 \notin F\}$ in \eqref{eq:pulling} obtained by pulling at $r_1$ such that $F'$ doesn't contain $f_1$ must contain $n/2$ rotations and $n/2 - 1$ reflections.
\end{lemma}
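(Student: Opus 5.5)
The plan is to use the pyramid structure of $\conv(F \cup r_1)$ together with the single affine dependence among the vertices of $F$ given by Lemma \ref{lem:4.5}. By \eqref{eq:new_facets} and the discussion before it, $F$ is a facet of $Q_n$ containing $f_1$ with $n/2$ rotations and $n/2$ reflections and no antipodal pairs, and $r_1 \notin F$. Since $F$ is a facet, $r_1$ lies strictly off its affine hull, so $P := \conv(F \cup r_1)$ is a pyramid with base $F$ and apex $r_1$. The facets of a pyramid are the base itself together with the sets $\conv(G \cup r_1)$ for $G$ a facet of the base. The base $F$ contains $f_1$, so any facet $F'$ of $P$ avoiding $f_1$ has the form $F' = \conv(G \cup r_1)$, where $G$ is a facet of $F$ with $f_1 \notin G$. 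It therefore suffices to show that every facet $G$ of the $(n-2)$-dimensional polytope $F$ omits exactly one rotation and exactly one reflection of $F$.

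First I count vertices. $G$ has dimension $n-3$, so it has at least $n-2$ vertices. By Lemma \ref{lem:4.5}, any $n-1$ vertices of $F$ are affinely independent: they have no antipodal pairs and fewer than $n$ elements. Such a set spans an $(n-2)$-dimensional affine space, i.e. all of $\operatorname{aff}(F)$, so it cannot lie in a proper face. Hence $G$ has exactly $n-2$ vertices and omits exactly two vertices $u, v$ of $F$.

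Next I rule out the cases where $u, v$ are two rotations or two reflections. This is the step that needs care. Write $F$'s vertices as $r_{\sigma(1)},\dots,r_{\sigma(n/2)}, f_{\sigma(n/2+1)},\dots,f_{\sigma(n)}$. From the proof of Lemma \ref{lem:4.5} we have the affine dependence
\[
\sum_{k \le n/2} r_{\sigma(k)} - \sum_{k > n/2} f_{\sigma(k)} = 0,
\]
whose coefficients sum to zero. Since $G$ is a facet of $F$, there is an affine functional $\ell$ on $\operatorname{aff}(F)$ with $\ell = 0$ on $G$ and $\ell > 0$ on the remaining vertices of $F$. Because the coefficients sum to zero, the relation remains valid after $\ell$ is applied:
\[
\sum_{k \le n/2} \ell(r_{\sigma(k)}) - \sum_{k>n/2} \ell(f_{\sigma(k)}) = 0.
\]
Only $\ell(u)$ and $\ell(v)$ are nonzero. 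If $u, v$ were both rotations, this would give $\ell(u) + \ell(v) = 0$, contradicting positivity; the same contradiction arises if both are reflections. So $G$ omits one rotation and one reflection. Since $f_1 \notin G$, the omitted reflection is $f_1$.

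Finally I count. $G$ has $n/2 - 1$ rotations and $n/2 - 1$ reflections, so $F' = \conv(G \cup r_1)$ has $n/2$ rotations and $n/2 - 1$ reflections, as claimed. This $F'$ is a simplex with $n-1$ vertices, consistent with the remark before the lemma.
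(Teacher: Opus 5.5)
Your proof is correct, but it is organized differently from the paper's.

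The paper does not use the pyramid structure. It takes from the remark preceding the lemma that $F'$ is a simplex on $n-1$ vertices, so the only case to exclude is $n/2+1$ rotations and $n/2-2$ reflections. It then writes a supporting functional $\varphi - c$ of $F'$ on all of $\mathbb{R}^n$ and uses $f_i = \frac{2}{n}(r_1+\cdots+r_n) - r_i$ to show that $\varphi$ is constant, say $d$, on the antipodes of the reflections in $F'$. From this it finds that the two omitted reflections $f_{\sigma(n)}$ and $f_1$ would have $\varphi - c$ equal to $c-d$ and $d-c$, which cannot both be positive. That is exactly your observation that $\ell(u)+\ell(v)=0$ when both omitted vertices are reflections, so the underlying contradiction is the same.

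What your route buys is structure:
\begin{enumerate}
\item Reducing to facets of the base $F$ lets you apply the single affine dependence $\sum_{k\le n/2} r_{\sigma(k)} - \sum_{k>n/2} f_{\sigma(k)} = 0$ intrinsically, as a Radon-partition argument with no coordinates.
\item You prove from Lemma~\ref{lem:4.5} that $F'$ has exactly $n-1$ vertices, rather than taking it from the loosely justified preceding remark.
\item You get Lemma~\ref{lem:4.14} for free: every facet of the pyramid $\conv(F\cup r_1)$ other than the base contains the apex $r_1$. The paper needs a second functional computation for that lemma.
\end{enumerate}

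In exchange, the paper's version is a direct explicit calculation that does not appeal to the face structure of pyramids.
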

\begin{proof}
     Since we start by excluding $f_1$, a facet excluding $f_1$ must contain $n/2 - 1$ reflection vertices or $n/2 - 2$ reflection vertices. Suppose there is a facet excluding $f_1$ that has $n/2 + 1$ rotations and $n/2 - 2$ reflections. This means there exists a linear inequality $\varphi - c \geq 0$ satisfied by the above polytope such that $\varphi(r_{\sigma(j)}) = c$ for all $j \leq n/2 + 1$ and without loss of generality $\varphi(f_{\sigma(j)}) = c$ for all $n/2 + 1 < j < n$. Furthermore, these equalities cannot hold for the other $2$ reflections. With this in mind, \eqref{eq:ref} gives $\varphi(f_{\sigma(j)}) = \varphi(2/n(r_1 + \cdots + r_{n}) - r_{\sigma(j)}) = c$ for all $n/2 + 1 < j < n$. This means $\varphi$ is equal to some constant $d$ for these other rotation vertices. Computing
    \[
    \varphi\left(\frac{2}{n}(r_1 + \cdots + r_{\sigma(n)})\right) = \frac{2}{n}\left(\left(\frac{n}{2}+1\right)c + \left(\frac{n}{2}-2\right)d + \varphi(r_{\sigma(n)})\right).
    \]
    Since $\varphi(2/n(r_1 + \cdots + r_{\sigma(n)})) - d = c$ if and only if $\varphi(2/n(r_1 + \cdots + r_{\sigma(n)})) = c + d$, we have that\[
    2/n(c - 2d + \varphi(r_{\sigma(n)})) = 0 \quad \text{if and only if} \quad \varphi(r_{\sigma(n)}) = 2d - c.
    \] Since $f_{\sigma(n)} = 2/n(r_1 + \cdots + r_{\sigma(n)}) - r_{\sigma(n)}$ by \eqref{eq:ref}, we have that $\varphi(f_{\sigma(n)}) = (c+d) - (2d-c) = 2c - d$, and so $\varphi(f_{\sigma(n)}) - c = c - d$. This expression is positive, since the linear inequality is satisfied by all vertices of the polytope. However, since $f_{\sigma(1)} = f_1$ is a vertex of the polytope but is not in the facet, we also have $\varphi(f_{\sigma(1)}) - c = d - c > 0$, which contradicts $c-d>0$. Therefore, a facet must contain $n/2$ rotations and $n/2 - 1$ reflections. \end{proof}

\begin{lemma}
\label{lem:4.14}
    If a facet of some $\conv(F \cup r_1)$ \eqref{eq:new_facets} from the pulling subdivision $\{\conv(F \cup r_1) \mid r_1 \notin F\}$ in \eqref{eq:pulling} doesn't contain $f_1$, then it contains $r_1$.
\end{lemma}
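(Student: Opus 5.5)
The plan is to combine Lemma \ref{lem:4.13} with a counting argument on the vertex set of $\conv(F \cup r_1)$. Then rule out the single remaining bad case by observing that it would force $F'$ to lie inside the facet $F$.

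\textbf{Step 1: list the vertices of $P$.} Write $P := \conv(F \cup r_1)$ in the form \eqref{eq:new_facets}. Its vertices are:
\begin{itemize}[label={}]
\item the $n/2+1$ rotations $r_{\sigma(1)} = r_1, r_{\sigma(2)}, \ldots, r_{\sigma(n/2+1)}$;
\item the $n/2-1$ reflections $f_{\sigma(n/2+2)}, \ldots, f_{\sigma(n)}$;
\item the reflection $f_{\sigma(1)} = f_1$.
\end{itemize}
Now let $F'$ be a facet of $P$ with $f_1 \notin F'$. By Lemma \ref{lem:4.13}, $F'$ contains exactly $n/2$ rotations and $n/2-1$ reflections.

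\textbf{Step 2: pin down $F'$ by counting.} The only reflections of $P$ other than $f_1$ are the $n/2-1$ vertices $f_{\sigma(n/2+2)}, \ldots, f_{\sigma(n)}$. So $F'$ contains all of them. It also contains all but one of the $n/2+1$ rotations of $P$. The claim therefore reduces to showing that the omitted rotation cannot be $r_1$.

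\textbf{Step 3: exclude the case where $r_1$ is omitted.} Suppose the omitted rotation were $r_1$. Then the vertex set of $F'$ is exactly
\[
\{r_{\sigma(2)}, \ldots, r_{\sigma(n/2+1)}, f_{\sigma(n/2+2)}, \ldots, f_{\sigma(n)}\},
\]
which is $F$ with $f_1$ removed. In particular $F' \subsetneq F$.

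On the other hand, $F$ is itself a facet of $P$. It is a facet of $Q_n$, so the valid inequality $\varphi_\sigma \geq 0$ from Lemma \ref{lem:4.6} vanishes on $F$ and is nonnegative on $r_1 \in Q_n$. Moreover $r_1 \notin F$ and $P = \conv(F \cup r_1)$, so this inequality cuts out exactly $F$ from $P$. Since $\dim F = n-2 = \dim P - 1$, $F$ is a facet of $P$.

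Two distinct facets of a polytope cannot be nested, because a facet contained in another facet is a face of it and has the same dimension, hence equals it. Applying this to $F' \subsetneq F$ gives $F' = F$. But $F$ contains $f_1$, contradicting $f_1 \notin F'$.

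\textbf{Step 4: conclude.} The omitted rotation is therefore some $r_{\sigma(k)}$ with $k \geq 2$, and so $r_1 \in F'$.

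\textbf{Expected obstacle.} The main point needing care is the justification in Step 3 that $F$ is a facet of $P$. I expect this to follow as sketched, from the supporting inequality of Lemma \ref{lem:4.6} together with $r_1 \notin F$.
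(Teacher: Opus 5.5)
Your proof is correct, and it takes a different route from the paper's.

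\textbf{The paper's route.} The paper uses Lemma~\ref{lem:4.13} to isolate the single bad candidate, namely all vertices except $r_1,f_1$. It then posits a supporting functional $\varphi - c$ for that candidate and computes. Equality at the $f_{\sigma(j)}$ with $j\ge n/2+2$, together with $f_i = 2m - r_i$, gives $\varphi(r_{\sigma(j)})=d$ for a common constant $d$. Expanding $\varphi(2m)=c+d$ over the rotation basis then forces $\varphi(r_1)=d$. Hence $\varphi(f_1)=c$, so $f_1$ would lie on the facet.

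\textbf{Your route.} You replace this coordinate computation with a structural fact. $P$ is a pyramid over the facet $F$ of $Q_n$ with apex $r_1$, so $F$ is a facet of $P$. The candidate, the convex hull of the vertices of $F$ other than $f_1$, is a proper subset of $F$, and distinct facets cannot be nested.

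\textbf{Comparison.} Both arguments detect the same phenomenon: the $n-1$ affinely independent vertices of $F$ other than $f_1$ already span $\operatorname{aff}(F)$, so any hyperplane through them passes through $f_1$. Yours makes this reason visible and is shorter. In fact it does not need Lemma~\ref{lem:4.13} at all. A facet of $P$ avoiding both $r_1$ and $f_1$ has all its vertices among those of $F$, so it is a proper subset of the facet $F$, and you are done without any counting. The paper's computation, in exchange, stays inside the same coordinate framework as Lemma~\ref{lem:4.13}. It needs no facts about how faces of $Q_n$ restrict to $P$ beyond the existence of a supporting functional.

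\textbf{A small notational fix.} With the $\sigma$ of \eqref{eq:new_facets}, where $\sigma(1)=1$, the functional $\varphi_\sigma$ of Definition~\ref{def:linear_functional} vanishes at $r_1$, so it does not cut out $F$. You should instead use $\varphi_\tau$ for a reindexed permutation $\tau$ with $\{\tau(1),\dots,\tau(n/2)\}=\{\sigma(2),\dots,\sigma(n/2+1)\}$. Alternatively, simply invoke the general fact that any face of $Q_n$ contained in $P\subseteq Q_n$ is a face of $P$.
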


\begin{proof}
    With the vertices of $\conv(F \cup r_1)$ in \eqref{eq:new_facets}, suppose there is a facet that doesn't contain $f_1$ and doesn't contain $r_1$. The previous lemma \ref{lem:4.13} shows this facet must contain $n/2$ rotations and $n/2 - 1$ reflections. Thus, the only possibility for the vertices of such a facet is to choose all the other vertices besides $f_1$ and $r_1$. Then, there is a linear inequality $\varphi - c \geq 0$ that holds for the $n+1$ vertices above and achieves equality for all of the above vertices except for $r_1$ and $f_1$. Thus, for all $j > n/2 + 1$, $\varphi(r_{\sigma(j)}) = d$, some other constant. With this in mind, we have\[
    \varphi\bigl((n+1)\mathbf{1}\bigr) = c + d = \frac{2}{n}\left(\frac{n}{2}c + \left(\frac{n}{2}-1\right)d + \varphi(r_{\sigma(1)})\right).
    \] This equation is equivalent to $\varphi(r_{\sigma(1)}) = d$, but by \eqref{eq:ref} this means $\varphi(f_{\sigma(1)}) = c + d - d = c$, so $f_{\sigma(1)} = f_1$ should be a vertex of the facet, a contradiction.\end{proof}

\begin{lemma}
\label{lem:4.15}
    Any set of $n-1$ vertices taken from some $\conv(F \cup r_1)$ \eqref{eq:new_facets} from the subdivision $\{\conv(F \cup r_1) \mid r_1 \notin F\}$ in \eqref{eq:pulling} that excludes $f_1$ and excludes a rotation vertex besides $r_1$ forms a facet.
\end{lemma}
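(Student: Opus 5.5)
The plan is to mimic the proofs of Lemma \ref{lem:4.6} and Lemma \ref{lem:4.7}: I will write down an explicit linear functional that vanishes on the chosen $n-1$ vertices and is strictly positive on the two remaining vertices of $\conv(F \cup r_1)$.

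First I fix notation. By \eqref{eq:new_facets}, $\conv(F\cup r_1)$ has the $n+1$ vertices
\[
r_{\sigma(1)}=r_1,\ r_{\sigma(2)},\ \ldots,\ r_{\sigma(n/2+1)},\quad f_{\sigma(n/2+2)},\ \ldots,\ f_{\sigma(n)},\quad f_{\sigma(1)}=f_1 .
\]
The set $S$ in question omits $f_1$ and one rotation $r_{\sigma(k)}$ with $2\le k\le n/2+1$. So $S$ consists of $n/2$ rotations, among them $r_1$, and the $n/2-1$ reflections $f_{\sigma(j)}$ with $j\ge n/2+2$. It contains no antipodal pair. By Lemma \ref{lem:4.5}, $S$ is affinely (indeed linearly) independent, so $\conv(S)$ is a simplex of dimension $n-2$.

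Next I check that $\conv(F\cup r_1)$ is $(n-1)$-dimensional. Adding $f_1$ to $S$ gives $n$ vertices with exactly one antipodal pair, $\{r_1,f_1\}$. These are affinely independent by Lemma \ref{lem:4.5}. Hence it suffices to show that $\conv(S)$ is a face.

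For this, use Proposition \ref{prop:basis}: since $r_1,\dots,r_n$ form a basis, define the linear functional $\psi$ by
\[
\psi(r_{\sigma(j)})=0 \text{ for } j\le n/2+1,\ j\ne k, \qquad \psi(r_{\sigma(j)})=1 \text{ otherwise}.
\]
The "otherwise" case covers $j=k$ and all $j\ge n/2+2$. Exactly $n/2$ rotations receive the value $1$, so as in Lemma \ref{lem:4.6},
\[
\psi\bigl((n+1)\mathbf{1}\bigr)=\tfrac{2}{n}\cdot\tfrac{n}{2}=1 .
\]
Using \eqref{eq:ref}, this gives $\psi(f_i)=1-\psi(r_i)$, and therefore:
\begin{itemize}
\item $\psi(f_{\sigma(j)})=0$ for every $j\ge n/2+2$;
\item $\psi(f_1)=1-\psi(r_1)=1$;
\item $\psi(r_{\sigma(k)})=1$.
\end{itemize}
So $\psi\ge 0$ holds on all $n+1$ vertices of $\conv(F\cup r_1)$, with equality exactly on $S$. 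Thus $\conv(S)$ is a face of $\conv(F\cup r_1)$ of dimension $n-2$, i.e.\ a facet.

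The only delicate point is choosing the value $\psi(r_{\sigma(k)})$. If it is set to a free parameter $a$, the requirement that $\psi$ vanish on the reflections $f_{\sigma(j)}$ ($j\ge n/2+2$) reads $\tfrac{2}{n}(n/2-1+a)=1$, which forces $a=1$. I would verify this consistency explicitly, then observe that it automatically makes both omitted vertices lie strictly on the positive side.
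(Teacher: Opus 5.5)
Your proposal is correct and is essentially the paper's proof. Your $\psi$ is exactly the functional $\varphi_\sigma$ of Definition~\ref{def:linear_functional}, after relabeling so that the omitted rotation is $r_{\sigma(n/2+1)}$ (the paper's ``without loss of generality''), and the sign check via \eqref{eq:ref} is the same; your explicit dimension count using Lemma~\ref{lem:4.5} simply spells out a step the paper leaves implicit.
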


\begin{proof}
    Without loss of generality, choose the following vertices for the potential facet: \[
    r_{\sigma(1)}, \cdots, r_{\sigma(n/2)}, f_{\sigma(n/2 + 2)}, \cdots, f_{\sigma(n)}.
    \] Consider the linear functional $\varphi_\sigma$ (\ref{def:linear_functional}). We have that $\varphi_\sigma((n+1)\mathbf{1}) = 2/n(n/2) = 1$, so \eqref{eq:ref} gives $\varphi_\sigma(f_{\sigma(j)}) = 0$ for all $j \geq n/2 + 2$. Indeed, this linear functional vanishes on the desired $n-1$ vertices. Additionally, by \eqref{eq:ref}, $\varphi_\sigma(f_{\sigma(1)}) = 1 = \varphi_\sigma(r_{\sigma(n/2 + 1)})$, so the linear inequality $\varphi_\sigma(v) \geq 0 $ vanishes only on the desired vertices and is a valid inequality of $\conv(F \cup r_1)$.\end{proof}

The above $3$ lemmas fully characterize the facets $F'$ of the $\conv(F \cup r_1)$ \eqref{eq:new_facets} that do not contain $f_1$. For each $\conv(F \cup r_1)$ in the previous subdivision, all the facets excluding $f_1$ come from choosing a rotation vertex of $\conv(F \cup r_1)$ besides $r_1$ to exclude. Thus, each $\conv(F \cup r_1)$ has $n/2$ facets not containing $f_1$, as there are $n/2$ rotation vertices in $F$. Since there were $\binom{n-1}{\lfloor (n-1)/2 \rfloor}$ elements in the previous subdivision, we have $\frac{n}{2}\binom{n-1}{{\lfloor (n-1)/2 \rfloor}}$ elements in the subdivision after pulling at $f_1$.

\begin{lemma}
\label{lem:4.16}
    The subdivision after pulling at $r_1$ and $f_1$ is now a triangulation with each simplex having normalized volume $2n^{n-3}$.
\end{lemma}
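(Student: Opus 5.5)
The plan is to identify the cells of the refined subdivision explicitly, check that each is a simplex using Lemma \ref{lem:4.5}, and then compute its volume by the same determinant manipulations as in Lemma \ref{lem:4.12}.

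\emph{Identifying the cells.} By definition, pulling at $f_1$ refines the subdivision \eqref{eq:pulling}. Every element of the new subdivision has the form $\conv(F' \cup f_1)$, where $F'$ is a facet of some $\conv(F \cup r_1)$ in \eqref{eq:new_facets} with $f_1 \notin F'$. Lemmas \ref{lem:4.13}, \ref{lem:4.14} and \ref{lem:4.15} pin down $F'$ completely. It contains $r_1$, excludes $f_1$, excludes exactly one rotation $r_{\sigma(k)}$ with $2 \le k \le n/2+1$, and contains all the other vertices. Relabeling within $\sigma$ (which fixes $\sigma(1)=1$), we may assume the excluded rotation is $r_{\sigma(n/2+1)}$. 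The cell then has the $n$ vertices
\[
r_{\sigma(1)}, \ldots, r_{\sigma(n/2)}, \; f_{\sigma(n/2+2)}, \ldots, f_{\sigma(n)}, \; f_{\sigma(1)} .
\]
This list contains exactly one antipodal pair, namely $\{r_1, f_1\}$. So Lemma \ref{lem:4.5} makes it affinely independent, and the cell is an $(n-1)$-simplex. Hence every cell is a simplex and the subdivision is a triangulation.

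\emph{Computing the volume.} Translate by $-r_1$ and use the wedge notation of Lemma \ref{lem:4.11}, with $f_i = 2\Vec{m} - \Vec{r}_i$. The edge vectors are:
\begin{itemize}
\item $\Vec{r}_{\sigma(k)} - \Vec{r}_{\sigma(1)}$ for $2 \le k \le n/2$;
\item $2\Vec{m} - \Vec{r}_{\sigma(j)} - \Vec{r}_{\sigma(1)}$ for $n/2+2 \le j \le n$;
\item $2\Vec{m} - 2\Vec{r}_{\sigma(1)}$, coming from $f_1$.
\end{itemize}
Subtract the last factor from each reflection factor, so that each becomes $\Vec{r}_{\sigma(1)} - \Vec{r}_{\sigma(j)}$. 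Then flip signs and pull a factor $2$ out of the last factor. The result is
\[
2\Bigl|\bigwedge_{k \in I}\bigl(\Vec{r}_{\sigma(k)} - \Vec{r}_{\sigma(1)}\bigr) \wedge \bigl(\Vec{m} - \Vec{r}_{\sigma(1)}\bigr)\Bigr|, \qquad I = \{2,\ldots,n\} \setminus \{n/2+1\}.
\]
Next, subtract $\tfrac1n$ times each factor indexed by $I$ from the last factor, tracking coefficients exactly as in Lemma \ref{lem:4.12}. The last factor becomes $\tfrac1n\bigl(\Vec{r}_{\sigma(n/2+1)} - \Vec{r}_{\sigma(1)}\bigr)$. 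The whole wedge is therefore $\tfrac2n$ times the normalized volume of the simplex on all $n$ rotation vertices.

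\emph{Main obstacle.} Lemma \ref{lem:4.10}, which computes the rotation simplex, is stated only for odd $n$. So I must verify that its column reduction to the lower-triangular matrix with diagonal $1, -n, \ldots, -n$ never used parity. The entries of $\Bar r_i$ come from partial sums of $r_i - r_1$, and this is valid for every $n$. Hence that simplex also has normalized volume $n^{n-2}$ for even $n$, and each cell has normalized volume $\tfrac2n \cdot n^{n-2} = 2n^{n-3}$.

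\emph{Sanity check.} There are $\frac n2\binom{n-1}{\lfloor (n-1)/2\rfloor}$ cells. The total is
\[
\frac n2\binom{n-1}{\lfloor (n-1)/2\rfloor}\cdot 2n^{n-3} = n^{n-2}\binom{n-1}{\lfloor (n-1)/2\rfloor},
\]
which matches Theorem \ref{thm:4.3}.
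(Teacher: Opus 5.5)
Your proposal is correct and follows the paper's proof: you identify the cells via Lemmas \ref{lem:4.13}--\ref{lem:4.15}, show they are simplices by the one-antipodal-pair case of Lemma \ref{lem:4.5}, and compute each volume as $\tfrac{2}{n}\cdot n^{n-2}$ using the column reduction of Lemma \ref{lem:4.12}. Your explicit check that the reduction in Lemma \ref{lem:4.10} never uses the parity of $n$ is a worthwhile addition. The paper applies that lemma for even $n$ without comment, even though it is stated only for odd $n$.
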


\begin{proof}
    From the previous lemmas, we have that each $\conv(F' \cup f_1)$ has vertices \[
    r_{\sigma(1)}, \cdots, r_{\sigma(n/2)}, f_{\sigma(n/2 + 2)}, \cdots, f_{\sigma(n)}, f_{\sigma(1)}, \] where $\sigma(1) = 1$. This is a set with $n$ vertices and $1$ antipodal pair, which by the criteria for affine independence, forms a simplex. Hence, we now have a triangulation after pulling at $r_1, f_1$. Lastly, applying the same method in the proof of lemma \ref{lem:4.12} to these simplices shows that these simplices all have normalized volume $2n^{n-3}$.
\end{proof}

\begin{proof}[Proof of Theorem \ref{thm:4.3}]
    By Lemma \ref{lem:4.16}, the subdivision obtained by pulling at $r_1$ and then $f_1$ is a triangulation in which every simplex has normalized volume $2n^{n-3}$. By lemmas \ref{lem:4.13}--\ref{lem:4.15}, each of the $\binom{n-1}{\lfloor (n-1)/2 \rfloor}$ facets $\conv(F \cup r_1)$ contributes exactly $n/2$ such simplices, so the triangulation has $(n/2)\binom{n-1}{\lfloor(n-1)/2 \rfloor}$ simplices in total. Hence the normalized volume of the even dihedral permutohedron is:
    \[
    2n^{n-3}\left(\frac{n}{2}\right)\binom{n-1}{\lfloor \frac{n-1}{2} \rfloor} = n^{n-2}\binom{n-1}{\lfloor \frac{n-1}{2}\rfloor}.
    \]
\end{proof}

\begin{remark}
    Certainly, one could map the antipodal pair $(r_1, f_1)$ to any other antipodal pair $(r_i, f_i)$ by applying the matrix corresponding to some rotation $w \in D_{2n}$.
    Thus, one could produce an equivalent triangulation by pulling at any other two vertices in an antipodal pair. 
\end{remark}

\section{A Nontrivial Affine Symmetry}

For any orbit polytope corresponding to some group of permutation matrices $G$, we have an affine symmetry through applying any matrix $g \in G$ to the orbit polytope. In \cite{MR3436389}, the main question that is considered is when an orbit polytope has more affine symmetries than the group $G$. In this section, I construct a unimodular affine symmetry of the odd dihedral permutohedron that isn't one of the matrices $w \in D_{2n}$, when considering $D_{2n}$ as a group of permutation matrices. Before giving the theorem, consider the following definition.

\begin{definition}
    Two lattice polytopes $P,Q$ that live in $\mathbb{R}^n$ are \emph{unimodularly equivalent} if there exists an affine transformation $Ax + b$ that maps $P$ to $Q$ with $A \in \text{GL}_n(\mathbb{Z}), b \in \mathbb{Z}^n$.
\end{definition}

\begin{theorem}
\label{thm:5.3}
    There is a unimodular affine symmetry of the odd dihedral permutohedron that permutes the rotation vertices \eqref{eq:rotations} and the reflection vertices \eqref{eq:ref} by
    \[
        r_j \mapsto r_{2j-1 \,(\text{mod } n)}, \qquad f_j \mapsto f_{2j-1 \,(\text{mod } n)}, \qquad j \in [n].
    \]
    Here, the representative of the identity element in $\mathbb{Z}/n\mathbb{Z}$ is $n$.
\end{theorem}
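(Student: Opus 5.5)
The plan is to build the symmetry as a linear map of $\mathbb{R}^n$ defined on the rotation basis, and then to check that it preserves the relevant lattice. By Proposition \ref{prop:basis}, the vectors $r_1,\dots,r_n$ form a basis of $\mathbb{R}^n$. Since $n$ is odd, $2$ is invertible mod $n$, so $\tau\colon j\mapsto 2j-1 \pmod n$ is a bijection of $[n]$; note also that $\tau(1)=1$. Let $A$ be the unique linear map with $A r_j = r_{\tau(j)}$ for all $j$.

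First I check that $A$ has the right effect on the reflection vertices. Because $A$ permutes the basis $r_1,\dots,r_n$, it fixes the vertex centroid $m=\frac{1}{n}\sum_k r_k$ of Proposition \ref{prop:centroid}. Using \eqref{eq:ref},
\[
A f_j = A(2m - r_j) = 2m - r_{\tau(j)} = f_{\tau(j)} .
\]
Hence $A$ permutes the vertex set of $Q_n$ exactly as claimed, so $A(Q_n)=Q_n$. It also preserves the hyperplane $\sum x_i = n(n+1)/2$ and fixes $r_1$. In the lattice embedding used in the definition of normalized volume (translate by $-r_1$, then use the simple root basis of $\Lambda := \mathbb{Z}^n\cap\{\sum x_i=0\}$), $A$ therefore acts as the linear map $v\mapsto Av$ on the hyperplane $\sum x_i = 0$. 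What remains is to show that this restriction lies in $\mathrm{GL}(\Lambda)$.

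For that I will use a lattice-generation argument rather than computing the matrix of $A$. Let $L\subseteq \Lambda$ be the lattice generated by all differences $v - r_1$ with $v$ a vertex of $Q_n$. Since $A$ and $A^{-1}$ both permute the vertices and fix $r_1$, we have $A(L)=L$. So it suffices to prove $L=\Lambda$: then $A$ restricts to an automorphism of $\Lambda$, which is exactly a unimodular affine symmetry. To show $L=\Lambda$ I will produce each simple root $e_k-e_{k+1}$ from explicit vertex differences.
\begin{itemize}
\item From $r_{j+1}-r_j=\mathbf{1}-n e_{n-j+1}$ (computed as in Lemma \ref{lem:4.10}), differences of consecutive such vectors give $n(e_k-e_{k+1})\in L$.
\item The vector $f_2-r_1 = (n+1)\mathbf{1}-r_2-r_1$ has entries $n-2k$ for $k<n$ and $0$ in position $n$. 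Its consecutive coordinate gaps are $2$, apart from the last one, which is $2-n$.
\item Combining these with translates under the rotations in $D_{2n}$ (which preserve $L$ and act by cyclically permuting coordinates), I expect to obtain $2(e_k-e_{k+1})$ modulo $nL$-type vectors.
\item Then $\gcd(2,n)=1$ for odd $n$ yields $e_k-e_{k+1}\in L$.
\end{itemize}

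The main obstacle is this last step, getting $L=\Lambda$ cleanly: isolating a vector like $2(e_k-e_{k+1})+(\text{multiple of } n)$ from the explicit vertex coordinates takes some care. If that combinatorics gets messy, my fallback is to compute $A$ directly. From $(r_j)_k\equiv k+j-1$ one gets $(r_{\tau(j)})_k \equiv 2(r_j)_k - k \pmod n$. This suggests writing $A$ on $\Lambda$ in the simple root basis and checking that it is an integer matrix; its determinant is then $\pm1$ automatically, since $A$ preserves the volume of $Q_n$ and so $|\det A|=1$.
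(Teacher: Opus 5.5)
Your treatment of the reflections is correct, and slightly cleaner than the paper's, which reads off $L_n\mathbf{1}=\mathbf{1}$ from the explicit matrix. The unimodularity argument, however, has a genuine gap: it rests on the claim $L=\Lambda$, which is false for every odd $n\ge 5$. From $r_{j+1}-r_j=\mathbf{1}-ne_{n-j+1}$ you get, mod $n$,
\[
r_j-r_1\equiv (j-1)\mathbf{1}, \qquad f_j-r_1=(n+1)\mathbf{1}-r_j-r_1\equiv (2-j)\mathbf{1}-2r_1 .
\]
So the image of $L$ in $\Lambda/n\Lambda\cong(\mathbb{Z}/n)^{n-1}$ is the rank-$2$ submodule generated by $\mathbf{1}$ and $r_1$. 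For example, when $n=5$ every vertex difference is, mod $5$, a combination of $(1,1,1,1,1)$ and $(1,2,3,4,0)$. Combined with $n\Lambda\subseteq L$ (your first bullet), this gives $[\Lambda:L]=n^{n-3}$. That matches the simplices of normalized volume exactly $n^{n-3}$ in Lemma~\ref{lem:4.11}. Hence your third bullet cannot succeed, since together with $\gcd(2,n)=1$ it would force $L=\Lambda$. The invariance $A(L)=L$ tells you nothing about $\Lambda$: $\Lambda$ is just some intermediate lattice $L\subsetneq\Lambda\subseteq\frac1n L$, and its $A$-invariance is exactly what has to be proved.

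Your fallback has a valid final step. An integral matrix on $\Lambda$ that carries $Q_n-r_1$ onto itself has $|\det|=1$, which is a fine substitute for the paper's argument that the projected map permutes a basis and so has finite order. But the essential step is left undone. The congruence $(r_{\tau(j)})_k\equiv 2(r_j)_k-k$ only says $A(r_j-r_1)\equiv 2(r_j-r_1)\pmod n$. That controls $A$ on the span of the $r_j-r_1$, which by Lemma~\ref{lem:4.10} has index $n^{n-2}$ in $\Lambda$. It does not yield $A(e_k-e_{k+1})\in\mathbb{Z}^n$.

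The missing idea is the explicit formula of Lemma~\ref{lem:5.4}: $A=P_n-\tfrac1n J_n$, where row $i$ of the $0/1$ matrix $P_n$ selects two coordinates whose indices sum to $i+\tfrac{n+1}{2}$. One then checks case by case that the sum of those two coordinates of $r_j$, minus $\tfrac{n+1}{2}$, equals $(r_{2j-1})_i$ exactly, because the wraparounds cancel. Since $J_n$ annihilates $\Lambda$, $A(e_k-e_{k+1})=P_n(e_k-e_{k+1})$ is an integer vector with coordinate sum $0$. Your determinant argument then finishes the proof.
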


\begin{example}
    Let $n = 5$. The map $j \mapsto 2j - 1 \pmod 5$ sends $(1,2,3,4,5)$ to $(1,3,5,2,4)$. Hence the proposed affine symmetry induces the bijection
    \[
        r_1 \mapsto r_1,\ r_2 \mapsto r_3,\ r_3 \mapsto r_5,\ r_4 \mapsto r_2,\ r_5 \mapsto r_4,
    \]
    and the same permutation of indices holds for $f_1, \dots, f_5$.
\end{example}

Since the $n$ rotation vertices are linearly independent, there is a unique invertible $n \times n$ linear map $L_n$ with $L_n(r_j) = r_{2j-1}$ for all $j \in [n]$.

\begin{lemma}
\label{lem:5.4}
    The linear map $L_n$ with respect to the standard basis is
    \[L_n \;=\;
\begin{pmatrix}
1      &        &        &        & 1      &        &        &        \\
1      &        &        &        &        & 1      &        &        \\
       & 1      &        &        &        & 1      &        &        \\
       & 1      &        &        &        &        & 1      &        \\
       &        & \ddots &        &        &        & 1       & \ddots \\
       &        &        & 1      &        &        &        &      \\
       &        &        & 1      & \ddots &        &        & 1        \\
       &        &        &        & 1      &        &         & 1
\end{pmatrix} - \tfrac{1}{n}J_n, \qquad J_n = n \times n \ \text{all-ones matrix}.
\]
\end{lemma}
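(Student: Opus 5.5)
Since $r_1,\dots,r_n$ form a basis of $\mathbb{R}^n$ by Proposition \ref{prop:basis}, a linear map is determined by its values on them. So it suffices to check that the displayed matrix, call it $M = P - \tfrac1n J_n$ with $P$ the $0/1$ matrix shown, satisfies $M r_j = r_{2j-1}$ for every $j\in[n]$; uniqueness of $L_n$ then gives $M = L_n$.

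The $J_n$ term is handled first. Every $r_j$ is a permutation of $(1,\dots,n)$, so $\tfrac1n J_n r_j = \tfrac{n+1}{2}\mathbf{1} = (m+1)\mathbf{1}$, where $n = 2m+1$. The claim therefore reduces to the coordinatewise identity
\[
(P r_j)_i \;=\; (r_{2j-1})_i + (m+1) \qquad \text{for all } i,j\in[n],
\]
where $(r_{2j-1})_i$ is the representative in $\{1,\dots,n\}$ of $i+2j-2 \pmod n$.

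Next I read off $P$ explicitly. Row $i$ has exactly two ones, in columns $a(i)$ and $b(i)$. The first column index is $a(i)=\lceil i/2\rceil$, increasing by one every two rows. The second column index starts near the middle, and $b(i)-a(i)$ alternates between two consecutive values, about $m+1$ and $m+2$, according to the parity of $i$. I will pin down the exact offsets from the displayed pattern, taking indices cyclically modulo $n$. This fixes the convention: $(Pr_j)_i = (r_j)_{a(i)} + (r_j)_{b(i)}$, and each term is an entry of $r_j$, namely the representative of $a+j-1$, respectively $b+j-1$, modulo $n$.

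The heart of the proof is a small ``halving'' computation. Two entries of $r_j$ whose positions differ cyclically by $m+1$ have values $u$ and $u+m+1$ reduced mod $n$, with $u\in\{1,\dots,n\}$.
\begin{itemize}
\item If $u\le m$, there is no wraparound, the sum is $2u+m+1$, and subtracting $m+1$ leaves $2u \le n-1$.
\item If $u\ge m+1$, the larger value wraps, the sum is $2u+m+1-n$, and subtracting $m+1$ leaves $2u-n\in[1,n]$.
\end{itemize}
In both cases the result is exactly the representative of $2u \bmod n$. The same analysis with offset $m+2$ gives the representative of $2u+1$. It remains to match $2u$ (odd rows) or $2u+1$ (even rows) with $i+2j-2 \bmod n$, using $u = a(i)+j-1$ together with the appropriate choice of which of the two columns carries the smaller value.

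I expect this bookkeeping to be the main obstacle. It is exactly where an off-by-one in reading $a(i),b(i)$ from the picture would break the identity, and where the convention that $n$ (not $0$) represents the identity class must be respected. I will fix the indices by checking the $n=5$ example ($r_2\mapsto r_3$, $r_4\mapsto r_2$) against the matrix, then do the general two-case verification above. Finally I will note that $M\mathbf{1} = 2\cdot\mathbf{1}-\mathbf{1} = \mathbf{1}$, so $M$ also fixes the centroid $\tfrac{n+1}{2}\mathbf{1}$. By \eqref{eq:ref} this gives $M f_j = f_{2j-1}$ as well, consistent with Theorem \ref{thm:5.3}.
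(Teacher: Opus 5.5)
Your overall route is the paper's. You verify $Mr_j=r_{2j-1}$ on the basis $r_1,\dots,r_n$, strip off $\tfrac1n J_n r_j=\tfrac{n+1}{2}\mathbf{1}$, and then do a coordinatewise wraparound analysis. Your ``halving'' computation for two positions cyclically $m+1$ apart is correct. It is also a tidier packaging than the paper's three-way split for each parity of $i$.

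The gap is in the one place where this lemma has content: reading off $P_n$. You take the offsets $b(i)-a(i)$ to be $m+1$ and $m+2$. In fact they are $m+1$ for even $i$ and $m$ for odd $i$. Concretely (this is the paper's Lemma~\ref{lem:5.5}), row $2t$ has its ones in columns $t$ and $t+m+1$, and row $2t+1$ has them in columns $t+1$ and $t+m+1$. For $n=5$ the rows are $(1,0,1,0,0)$, $(1,0,0,1,0)$, $(0,1,0,1,0)$, $(0,1,0,0,1)$, $(0,0,1,0,1)$. No offset $m+2$ occurs. Your stated matching is $2u$ on odd rows and $2u+1$ on even rows, with $u=a(i)+j-1$. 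With it, both parities give $i+2j-1$, i.e.\ $Mr_j=r_{2j}$. That is exactly the off-by-one you anticipated, and the proposal defers rather than resolves it.

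The repair uses only your first halving computation. In every row the two columns form a pair $\{c,\,c+m+1\}$ modulo $n$:
\begin{itemize}
\item for even $i$, take $c=a(i)=i/2$;
\item for odd $i$, take $c=b(i)=m+\tfrac{i+1}{2}$, since $b(i)+m+1=a(i)+n$.
\end{itemize}
With $u\equiv c+j-1$, the halving step gives the representative in $[n]$ of $2u$. In both cases $2u\equiv i+2j-2 \pmod n$; for odd $i$ the extra $2m+1=n$ vanishes mod $n$. So drop the $m+2$/$2u+1$ branch entirely. Also make the choice of column explicit, rather than tying $u$ to $a(i)$ in every row. With that correction your argument proves the lemma, and with fewer cases than the paper's proof.
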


In the matrix above, all entries besides the $1$'s are $0$'s and given column $i$, column $i+1$ is the result of shifting the position of entries of column $i$ by the map $x \mapsto x + 2 \pmod{n}$. Let

\begin{equation} \label{eq:pn}
    P_n = \begin{pmatrix}
1      &        &        &        & 1      &        &        &        \\
1      &        &        &        &        & 1      &        &        \\
       & 1      &        &        &        & 1      &        &        \\
       & 1      &        &        &        &        & 1      &        \\
       &        & \ddots &        &        &        & 1       & \ddots \\
       &        &        & 1      &        &        &        &      \\
       &        &        & 1      & \ddots &        &        & 1        \\
       &        &        &        & 1      &        &         & 1
\end{pmatrix}.
\end{equation}
To prove Lemma~\ref{lem:5.4}, we can first try to understand $P_n$.

\begin{lemma}
\label{lem:5.5}
    Each row of $P_n$ \eqref{eq:pn} has exactly $2$ nonzero entries, both equal to $1$. Using $1$-indexing, row $i = 2m$ has its $1$'s in columns $m$ and $\tfrac{n+1}{2} + m$.
    Row $i = 2m+1$ has its $1$'s in columns $m+1$ and $\tfrac{n+1}{2} + m$.
\end{lemma}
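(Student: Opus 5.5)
Since the matrix $P_n$ in \eqref{eq:pn} is specified column by column, I will first turn that description into an explicit formula for the entries, and then read the rows off it. The description says column $1$ has its $1$'s in rows $1$ and $2$, and that column $i+1$ is obtained from column $i$ by shifting every entry down by $2$ modulo $n$. So column $j$ has its $1$'s exactly in rows
\[
2j-1 \quad\text{and}\quad 2j \pmod n,
\]
with residues taken in $\{1,\dots,n\}$. This matches the picture: the first block of columns puts consecutive pairs $(1,2),(3,4),\dots$ down the left, and the pattern wraps around past row $n$. I will check this against the displayed matrix for $n=5$ and $n=7$ to make sure the indexing of the ``$x\mapsto x+2$'' shift is the intended one. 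The main point is that, for odd $n$, the maps $j\mapsto 2j-1$ and $j\mapsto 2j$ are each bijections of $\mathbb{Z}/n$.

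From this it follows that each row $i$ contains exactly one $1$ coming from the rule ``row $2j-1$'' and exactly one coming from the rule ``row $2j$''. So I will solve two congruences:
\[
2j-1\equiv i \pmod n, \qquad 2j\equiv i \pmod n,
\]
for $j\in[n]$, using $2^{-1}\equiv \tfrac{n+1}{2}\pmod n$.

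The two solutions are different columns, because otherwise $2j-1\equiv 2j$, which is impossible. Hence row $i$ has exactly two nonzero entries, and both are $1$.

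Next I will compute the two columns explicitly in each parity case.

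\emph{Row $i=2m$, with $1\le m\le \tfrac{n-1}{2}$.} The congruence $2j\equiv 2m$ gives $j=m$. The congruence $2j-1\equiv 2m$ gives $2j\equiv 2m+1\equiv 2m+1-n$, so $j = m+\tfrac{1-n}{2} \equiv m+\tfrac{n+1}{2}$. This value lies in $[n]$ because $m\le \tfrac{n-1}{2}$.

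\emph{Row $i=2m+1$, with $0\le m\le\tfrac{n-1}{2}$.} The congruence $2j-1\equiv 2m+1$ gives $j=m+1$. The congruence $2j\equiv 2m+1$ gives $j\equiv m+\tfrac{n+1}{2}$. I must check the boundary case $m=\tfrac{n-1}{2}$, i.e.\ row $n$. There the second column is $n \equiv 0$, which should be represented as $n$. I will state this convention explicitly, consistent with the paper's choice of $n$ as the representative of $0$.

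The only real obstacle is the first step: pinning down the column rule precisely from the informal description and the picture, including the wrap-around. Once that is fixed, the rest is the modular arithmetic above.
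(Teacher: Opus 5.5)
Your proposal is correct and is essentially the paper's argument. The paper reads off, for row $2m$, the lower entry of the pair in column $m$ and the upper entry of the wrapped pair in column $\tfrac{n+1}{2}+m$ (and symmetrically for row $2m+1$), which is exactly your solution of $2j\equiv i$ and $2j-1\equiv i \pmod n$. Your version is more careful: by making the column rule $\{2j-1,2j\} \bmod n$ explicit and using that $j\mapsto 2j-1$ and $j\mapsto 2j$ are bijections of $\mathbb{Z}/n$, you actually justify the ``exactly two'' claim, which the paper leaves to the picture.
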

\begin{proof}
    We treat the two parities of $i$ separately.
    \begin{itemize}
        \item If $i = 2m$, then row $i$ contains a $1$ coming from the lower entry of a pair in the first half of the columns of $P_n$, which lies in column $m$, and a $1$ coming from the upper entry of a pair in the second half of the columns, which lies in column $\tfrac{n+1}{2} + m$.
        \item If $i = 2m+1$, then row $i$ contains a $1$ coming from the upper entry of a pair in the first half of the columns, which lies in column $m+1$, and a $1$ coming from the lower entry of a pair in the second half of the columns, which lies in column $\tfrac{n+1}{2} + m$.
    \end{itemize}
\end{proof}

\begin{proof}[Proof of Lemma~\ref{lem:5.4}]
    Fix $j \in [n]$ and put $k = j - 1 \in [0, n-1]$, so that $(r_j)_c \equiv c + k \pmod n$. It suffices to show $[L_n r_j]_i \equiv i + 2k \pmod n$, since $(r_{2j-1})_i = (r_{2k+1})_i \equiv i + 2k$. Each coordinate of $r_j$ lies in $[n]$ and $\tfrac1n J_n r_j = \tfrac{n+1}{2}\mathbf{1}$, so
    \[
        [L_n r_j]_i = [P_n r_j]_i - \tfrac{n+1}{2}.
    \]
    By Lemma~\ref{lem:5.5}, $[P_n r_j]_i$ is the sum of two coordinates of $r_j$. We reduce each into $[n]$, add, and subtract $\tfrac{n+1}{2}$. We treat $i$ even and $i$ odd separately.

    \medskip
    \noindent \textbf{Case $i = 2m$.} Here $m \in \left[1, \tfrac{n-1}{2}\right]$, and the two contributions are $(m + k) \pmod n$ and $\left(m + \tfrac{n+1}{2} + k\right) \pmod n$. There are three cases:
    \begin{itemize}
        \item If $m + \tfrac{n+1}{2} + k \leq n$, meaning that $i + 2k \leq n - 1$, then the $i$th coordinate is $2m + 2k = i + 2k$.
        \item If $m + k \leq n < m + \tfrac{n+1}{2} + k$, i.e., $n < i + 2k \leq 2n$, then the $i$th coordinate is $2m + 2k - n = i + 2k - n$.
        \item If $n < m + k$, i.e., $2n < i + 2k \leq 3n$, then the $i$th coordinate is $2m + 2k - 2n = i + 2k - 2n$.
    \end{itemize}

    \medskip
    \noindent \textbf{Case $i = 2m+1$.} Here $m \in \left[0, \tfrac{n-1}{2}\right]$, and the two contributions are $(m + 1 + k) \pmod n$ and $\left(m + \tfrac{n+1}{2} + k\right) \pmod n$. There are three cases:
    \begin{itemize}
        \item If $m + \tfrac{n+1}{2} + k \leq n$, i.e., $i + 2k \leq n$, then the $i$th coordinate is $2m + 1 + 2k = i + 2k$.
        \item If $m + 1 + k \leq n < m + \tfrac{n+1}{2} + k$, i.e., $n < i + 2k \leq 2n$, then the $i$th coordinate is $i + 2k - n$.
        \item If $n < m + 1 + k$, i.e., $2n < i + 2k \leq 3n$, then the $i$th coordinate is $i + 2k - 2n$.
    \end{itemize}

    In every case, the $i$th coordinate is congruent to $i + 2k \pmod n$ and lies in $[n]$. Hence $L_n r_j = r_{2j-1}$.
\end{proof}

While $L_n$ induces the desired bijection of the $n$ rotation vertices, it remains to show that $L_n$ induces the desired bijection of the $n$ reflection vertices.

\begin{lemma}
\label{lem:reflections}
    The linear map $L_n$ satisfies $L_n(f_j) = f_{2j-1 \,(\text{mod } n)}$ for all $j \in [n]$.
\end{lemma}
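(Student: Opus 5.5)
Since $f_j = (n+1)\mathbf{1} - r_j$ by \eqref{eq:ref}, and $L_n$ is linear, we have
\[
L_n(f_j) = (n+1)L_n(\mathbf{1}) - L_n(r_j) = (n+1)L_n(\mathbf{1}) - r_{2j-1},
\]
using Lemma~\ref{lem:5.4} for the second term. Comparing with $f_{2j-1} = (n+1)\mathbf{1} - r_{2j-1}$, the lemma reduces to the single identity $L_n(\mathbf{1}) = \mathbf{1}$. So the plan is to verify that $L_n$ fixes the all-ones vector, and nothing else is needed.

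I will do this in two independent ways, either of which suffices.

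\textbf{Via the rotation vertices.} By Proposition~\ref{prop:centroid}, $\frac{n+1}{2}\mathbf{1} = \frac{1}{n}(r_1 + \cdots + r_n)$. Applying $L_n$ gives
\[
L_n(\mathbf{1}) = \frac{2}{n(n+1)} \sum_{j=1}^n r_{2j-1}.
\]
Since $n$ is odd, $2$ is invertible mod $n$, so $j \mapsto 2j-1 \pmod n$ is a bijection of $[n]$. The sum is therefore again $r_1 + \cdots + r_n = \frac{n(n+1)}{2}\mathbf{1}$, and $L_n(\mathbf{1}) = \mathbf{1}$.

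\textbf{Via the explicit matrix.} By Lemma~\ref{lem:5.5}, each row of $P_n$ has exactly two $1$'s, so $P_n\mathbf{1} = 2\mathbf{1}$. Also $\frac{1}{n}J_n\mathbf{1} = \mathbf{1}$. Hence $L_n\mathbf{1} = 2\mathbf{1} - \mathbf{1} = \mathbf{1}$.

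The only point needing care is the bijectivity of $j \mapsto 2j-1$ modulo $n$ with representatives in $[n]$. This is exactly where oddness of $n$ enters. It ensures that the index $2j-1$ is read as an element of $[n]$ (with $n$ representing $0$), consistent with how $f_{2j-1}$ is labeled. With $L_n(\mathbf{1}) = \mathbf{1}$ established, substituting back gives $L_n(f_j) = (n+1)\mathbf{1} - r_{2j-1} = f_{2j-1}$ for every $j \in [n]$, which proves the lemma.
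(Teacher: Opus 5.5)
Your proof is correct. The reduction to $L_n(\mathbf{1})=\mathbf{1}$, followed by your second verification, is exactly the paper's proof. That verification uses $P_n\mathbf{1}=2\mathbf{1}$ (from the two $1$'s per row in Lemma~\ref{lem:5.5}) and $\tfrac1n J_n\mathbf{1}=\mathbf{1}$.

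Your first verification is a genuinely different way to get the key identity. It uses only the defining property $L_n(r_j)=r_{2j-1}$ and the fact that $j\mapsto 2j-1$ permutes $\mathbb{Z}/n\mathbb{Z}$ for odd $n$. From these, $L_n$ fixes the centroid $\tfrac1n(r_1+\cdots+r_n)=\tfrac{n+1}{2}\mathbf{1}$.

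This route bypasses the explicit matrix of Lemma~\ref{lem:5.4} entirely, so the lemma follows straight from the abstract definition of $L_n$. It also proves something more general: any linear map that permutes the rotation vertices permutes the reflection vertices by the same index permutation. The paper's route instead relies on the concrete form $P_n-\tfrac1n J_n$, which it needs anyway for the unimodularity argument in Theorem~\ref{thm:5.3}.

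One small correction to your closing remark. Oddness of $n$ is what makes $j\mapsto 2j-1$ a bijection of $[n]$. Choosing a representative of $2j-1$ in $[n]$ is possible for any $n$.
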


\begin{proof}
    Each row of $P_n$ has exactly two $1$'s by Lemma~\ref{lem:5.5}, so $P_n(\mathbf{1}) = 2 \cdot \mathbf{1}$. Also $\tfrac1n J_n \mathbf{1} = \mathbf{1}$. Hence $L_n \mathbf{1} = \mathbf{1}$. By \eqref{eq:ref} and Lemma~\ref{lem:5.4},
    \[
        L_n(f_j) = L_n\left((n+1)\mathbf{1} - r_j\right) = (n+1)\mathbf{1} - r_{2j-1} = f_{2j-1}. \qedhere
    \]
\end{proof}

The last thing we need to show to prove theorem \ref{thm:5.3} is that this linear map is unimodular when considered as a map in the natural $(n-1)$-dimensional space considered earlier in this paper.

\begin{proof}[Proof of Theorem \ref{thm:5.3}]
To project this map $L_n$ down to the $(n-1)$-dimensional space and show that it is unimodular, we check what it does to the $n-1$ simple-root basis vectors $e_1-e_2, e_2-e_3, \ldots, e_{n-1}-e_n$. Since $L_n = P_n - \frac{1}{n}J_n$ and $J_n$ maps each of these basis vectors to $0$, we have $L_n(e_{i-1}-e_i) = P_n(e_{i-1}-e_i)$. In particular, $L_n(e_{i-1}-e_i)$ has integer coordinates and coordinate sum $0$, so it has integer coordinates when expressed in the simple-root basis $\{e_{j-1}-e_j\}$. Hence, $L_n$ is an integer matrix after projection. Since the $n$ rotation vertices are linearly independent in $\mathbb{R}^n$, after translating by $-r_1$ and embedding into $(n-1)$-dimensional space, the other $n-1$ rotation vertices form an $\mathbb{R}$-basis of the $(n-1)$-dimensional space. Since $L_n(r_1) = r_1$, the projected map still has the same action on the projected vertices as it does on the original vertices. Since $r_2-r_1, r_3-r_1, \ldots, r_n-r_1$ form an $\mathbb{R}$-basis of the $(n-1)$-dimensional space and our map $L_n$ permutes these basis elements, our projected matrix is invertible and its inverse is some power of itself. Since any power of an integer matrix is an integer matrix, $L_n$ is an invertible integral matrix with an integer matrix inverse, so $L_n \in GL_{n-1}(\mathbb{Z})$ and is hence a unimodular map.\end{proof}

Below is an application of this interesting affine symmetry of the dihedral permutohedron.

\begin{cor}
    With $s_1, s_2$ defined in \eqref{eq:odd_perm}, consider the subpolytopes $P_n, R_n \subset Q_n$, where $P_n = \conv(D_{2n}\rho - \{r_1, s_2(\rho)\})$ and $R_n = \conv(D_{2n}\rho - \{r_1, s_1(\rho)\})$. $P_n$ and $R_n$ are unimodularly equivalent for odd $n \geq 3$.
\end{cor}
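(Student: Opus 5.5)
The plan is to apply the affine symmetry of Theorem \ref{thm:5.3} directly. It fixes $r_1$, so the only work is to check that it carries the removed reflection vertex $s_2(\rho)$ to the removed reflection vertex $s_1(\rho)$. The corollary then reduces to identifying these two reflection vertices among the $f_j$ of \eqref{eq:ref}.

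\textbf{Step 1: identify the removed vertices.} From \eqref{eq:odd_perm}, $s_2(\rho) = (1, n, n-1, \ldots, 3, 2)$, as already noted in the proof of Proposition \ref{prop:centroid}. Comparing with \eqref{eq:ref}, this vertex has first coordinate $1 = (n+1) - n$. So it is $f_j$ for the $j$ with $(r_j)_1 = n$, namely $j = n$. A coordinatewise check confirms this:
\[
f_n = (n+1)\mathbf{1} - (n, 1, 2, \ldots, n-1) = (1, n, n-1, \ldots, 2).
\]
Similarly, $s_1 = (1,2)(3,n)(4,n-1)\cdots$ gives $s_1(\rho) = (2, 1, n, n-1, \ldots, 3)$. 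This equals
\[
(n+1)\mathbf{1} - (n-1, n, 1, 2, \ldots, n-2) = f_{n-1}.
\]
Because $s_1, s_2$ are involutions, whether one reads $\sigma(\rho)$ as $(\sigma(1), \ldots, \sigma(n))$ or via $\sigma^{-1}$ does not matter. Hence $P_n = \conv(D_{2n}\rho \setminus \{r_1, f_n\})$ and $R_n = \conv(D_{2n}\rho \setminus \{r_1, f_{n-1}\})$.

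\textbf{Step 2: apply the symmetry.} Let $L_n$ be the map of Lemma \ref{lem:5.4}. By Lemma \ref{lem:5.4} and Lemma \ref{lem:reflections}, $L_n$ permutes the vertex set of $Q_n$ by $r_j \mapsto r_{2j-1}$ and $f_j \mapsto f_{2j-1}$, with indices mod $n$ and representative $n$. In particular $r_1 \mapsto r_1$, and $f_n \mapsto f_{2n-1} = f_{n-1}$. Since $L_n$ is a bijection on vertices, it maps $D_{2n}\rho \setminus \{r_1, f_n\}$ bijectively onto $D_{2n}\rho \setminus \{r_1, f_{n-1}\}$. Since $L_n$ is linear, it maps the convex hull $P_n$ onto $R_n$.

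\textbf{Step 3: check unimodularity.} This is the only point needing care. Both polytopes live in the lattice coordinates used throughout the paper: translate by $-r_1$, then write points in the simple root basis of $\mathbb{R}^{n-1}$. Because $L_n r_1 = r_1$, in these coordinates the map becomes $x \mapsto L_n x$, the projected linear map with zero translation. The proof of Theorem \ref{thm:5.3} shows this projected map lies in $\mathrm{GL}_{n-1}(\mathbb{Z})$. So the equivalence $P_n \to R_n$ is an affine map $Ax + b$ with $A \in \mathrm{GL}_{n-1}(\mathbb{Z})$ and $b = 0$, and $P_n$ and $R_n$ are unimodularly equivalent.
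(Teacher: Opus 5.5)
Your proposal is correct and follows the same route as the paper: you apply the symmetry $L_n$ of Theorem~\ref{thm:5.3}, which fixes $r_1$ and sends $s_2(\rho) = f_n$ to $f_{2n-1} = f_{n-1} = s_1(\rho)$, so it carries $P_n$ onto $R_n$. Your coordinatewise verification of these identifications, and your remark that the translation part vanishes in the lattice coordinates because $L_n r_1 = r_1$, fill in details that the paper's one-line proof leaves implicit.
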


\begin{proof}
By theorem \ref{thm:5.3}, $L_n$ maps $r_1 \mapsto r_1$ and maps $s_2(\rho) = f_n \mapsto f_{n-1} = s_1(\rho)$ and so $P_n$ maps to $R_n$ under this unimodular equivalence. Note that unimodular equivalence is an even stronger condition than Ehrhart equivalence.
\end{proof}

Lastly, here are some of the interesting questions that this paper did not address.

\section{Future Questions}

\begin{question}
    Can we describe the number of lattice points or the Ehrhart polynomial of the dihedral permutohedron?
\end{question}

\begin{question}
    What is the full group of affine symmetries for the dihedral permutohedron?
\end{question}

\begin{question}
    What happens if we define the dihedral permutohedron using an arbitrary basepoint or ``generic'' basepoint?
\end{question}

\begin{question}
    Can we use the arguments of this paper to calculate the normalized volume of other centrally symmetric polytopes?
\end{question}

\section{Acknowledgements}

I am very thankful to Yuhan Jiang for giving me the original problem that inspired this paper, advising me throughout my research this summer, and for helping me make many revisions to this paper! Also many thanks to Aylin Ambriz for advice on deciding the title and creating the dihedral permutohedron figures for $n=3,4$.

\bibliographystyle{alpha}
\bibliography{main}

@book{schoute,
author = {Schoute, P. H. and Cardinaal, Jacob},
address = {Amsterdam},
booktitle = {Analytical treatment of the polytopes regularly derived from the regular polytopes.},
language = {eng},
lccn = {a  29000397},
publisher = {J. M\"uller},
series = {Verhandelingen der Koninklijke akademie van wetenschappen te Amsterdam. 1. sectie, deel XI, no. 3, 5; deel XII, no. 2},
title = {Analytical treatment of the polytopes regularly derived from the regular polytopes. By P.H. Schoute.},
year = {1911 - 1916},
}

@article {MR45168,
    AUTHOR = {Rado, R.},
     TITLE = {An inequality},
   JOURNAL = {J. London Math. Soc.},
  FJOURNAL = {The Journal of the London Mathematical Society},
    VOLUME = {27},
      YEAR = {1952},
     PAGES = {1--6},
      ISSN = {0024-6107,1469-7750},
   MRCLASS = {27.0X},
  MRNUMBER = {45168},
MRREVIEWER = {R.\ C.\ Lyndon},
       DOI = {10.1112/jlms/s1-27.1.1},
       URL = {https://doi.org/10.1112/jlms/s1-27.1.1},
}

@article {MR1239510,
    AUTHOR = {Onn, Shmuel},
     TITLE = {Geometry, complexity, and combinatorics of permutation
              polytopes},
   JOURNAL = {J. Combin. Theory Ser. A},
  FJOURNAL = {Journal of Combinatorial Theory. Series A},
    VOLUME = {64},
      YEAR = {1993},
    NUMBER = {1},
     PAGES = {31--49},
      ISSN = {0097-3165,1096-0899},
   MRCLASS = {52B12 (05E10)},
  MRNUMBER = {1239510},
MRREVIEWER = {Egon\ Schulte},
       DOI = {10.1016/0097-3165(93)90086-N},
       URL = {https://doi.org/10.1016/0097-3165(93)90086-N},
}

@article {MR3029450,
    AUTHOR = {Baumeister, Barbara and Haase, Christian and Nill, Benjamin
              and Paffenholz, Andreas},
     TITLE = {Polytopes associated to dihedral groups},
   JOURNAL = {Ars Math. Contemp.},
  FJOURNAL = {Ars Mathematica Contemporanea},
    VOLUME = {7},
      YEAR = {2014},
    NUMBER = {1},
     PAGES = {30--38},
      ISSN = {1855-3966,1855-3974},
   MRCLASS = {52B12 (05E10 20B35 52B05 52B20)},
  MRNUMBER = {3029450},
MRREVIEWER = {Alexander\ Kolpakov},
       DOI = {10.26493/1855-3974.289.91d},
       URL = {https://doi.org/10.26493/1855-3974.289.91d},
}

@article {MR2538016,
    AUTHOR = {Baumeister, Barbara and Haase, Christian and Nill, Benjamin
              and Paffenholz, Andreas},
     TITLE = {On permutation polytopes},
   JOURNAL = {Adv. Math.},
  FJOURNAL = {Advances in Mathematics},
    VOLUME = {222},
      YEAR = {2009},
    NUMBER = {2},
     PAGES = {431--452},
      ISSN = {0001-8708,1090-2082},
   MRCLASS = {52B20 (05E15 52B05)},
  MRNUMBER = {2538016},
MRREVIEWER = {Benjamin\ Braun},
       DOI = {10.1016/j.aim.2009.05.003},
       URL = {https://doi.org/10.1016/j.aim.2009.05.003},
}

@article {MR3436389,
    AUTHOR = {Friese, Erik and Ladisch, Frieder},
     TITLE = {Affine symmetries of orbit polytopes},
   JOURNAL = {Adv. Math.},
  FJOURNAL = {Advances in Mathematics},
    VOLUME = {288},
      YEAR = {2016},
     PAGES = {386--425},
      ISSN = {0001-8708,1090-2082},
   MRCLASS = {52B12 (05E15 20B25 20C15 52B15)},
  MRNUMBER = {3436389},
MRREVIEWER = {P.\ McMullen},
       DOI = {10.1016/j.aim.2015.10.021},
       URL = {https://doi.org/10.1016/j.aim.2015.10.021},
}

@article {stanley80,
    AUTHOR = {Stanley, Richard P.},
     TITLE = {Decompositions of rational convex polytopes},
   JOURNAL = {Ann. Discrete Math.},
  FJOURNAL = {Annals of Discrete Mathematics},
    VOLUME = {6},
      YEAR = {1980},
     PAGES = {333--342},
   MRCLASS = {52A43},
  MRNUMBER = {593545},
MRREVIEWER = {P.\ McMullen},
}

@book{circ,
    author = {Davis, Philip J.},
    address = {New York},
    title = {Circulant matrices},
    isbn = {0471057711},
    publisher = {Wiley},
    series = {Pure and applied mathematics},
    year = {1979},
}
\end{document}